\theoremstyle{plain}
\newtheorem{theorem}{Theorem}[section]
\newtheorem{lemma}[theorem]{Lemma}
\newtheorem{corollary}[theorem]{Corollary}
\newtheorem{proposition}[theorem]{Proposition}
\newtheorem*{theorem*}{Theorem}
\newtheorem*{proposition*}{Proposition}
\theoremstyle{definition}
\newtheorem{definition}[theorem]{Definition}
\newtheorem{assumption}[theorem]{Assumption}
\theoremstyle{remark}
\newtheorem{remark}[theorem]{Remark}
\newtheorem*{remark*}{Remark}
\newcommand{\prob}{\mathbb{P}}
\newcommand{\N}{\mathbb{N}}
\newcommand{\R}{\mathbb{R}}
\newcommand{\E}{\mathbb{E}}
\newcommand{\eps}{\epsilon}
\newcommand{\indic}{\mathds{1}}
\date{ }
\begin{document}

\begin{center}
\Large\bf
Phase transition for the bottom singular vector of rectangular random  matrices
\end{center}

\vspace{0.5cm}
\renewcommand{\thefootnote}{\fnsymbol{footnote}}
\hspace{5ex}	
\begin{center}
 \begin{minipage}[t]{0.4\textwidth}
\begin{center}
Zhigang Bao\footnotemark[1]  \\
\footnotesize {University of Hong Kong}\\
{\it zgbao@hku.hk}
\end{center}
\end{minipage}
\begin{minipage}[t]{0.4\textwidth}
\begin{center}
Jaehun Lee\footnotemark[2]  \\ 
\footnotesize {City University of Hong Kong}\\
{\it jaehun.lee@cityu.edu.hk}
\end{center} 
\end{minipage}
\end{center}
\vspace{2ex}
\begin{center}
\begin{minipage}[t]{0.5\textwidth}
\begin{center}
Xiaocong Xu\footnotemark[3]  \\
\footnotesize {Hong Kong University of Science and Technology}\\
{\it xxuay@connect.ust.hk}
\end{center}
\end{minipage}
\end{center}

\footnotetext[1]{Supported by Hong Kong RGC Grant 16303922, NSFC12222121 and NSFC12271475}
\footnotetext[2]{Supported by  NSFC No. 2023YFA1010400}
\footnotetext[3]{Supported by  Hong Kong RGC Grant 16303922 and NSFC12222121}

\vspace{2ex}
\begin{center}
 \begin{minipage}{0.8\textwidth} {
 In this paper, we consider the rectangular random matrix $X=(x_{ij})\in \mathbb{R}^{N\times n}$ whose entries are iid with tail $\mathbb{P}(|x_{ij}|>t)\sim t^{-\alpha}$ for some $\alpha>0$. We consider the regime $N(n)/n\to \mathsf{a}>1$ as $n$ tends to infinity. Our main interest lies in the right singular vector corresponding to the smallest singular value, which we will refer to as the "bottom singular vector", denoted by $\mathfrak{u}$. 
In this paper, we prove the following  phase transition regarding the localization length of $\mathfrak{u}$:  when $\alpha<2$ the localization length is $O(n/\log n)$;  when $\alpha>2$ the localization length is of order $n$. Similar results hold for all right singular vectors around the smallest singular value. The variational definition of the bottom singular vector suggests that the mechanism for this localization-delocalization transition when $\alpha$ goes across $2$ is intrinsically different from the one for the top singular vector when $\alpha$ goes across $4$.
}
\end{minipage}
\end{center}

\vspace{2ex}
\section{Introduction}

\noindent
Given a large random matrix, one may wonder how the mass of its eigenvectors/singular vectors is distributed over the coordinates. A widely studied pair of properties regarding the profile of random matrix eigenvectors/singular vectors is localization/delocalization. Roughly speaking, a vector is localized if its mass is mainly distributed among a small portion of the coordinates, while it is delocalized if the mass spreads out among a large portion of the coordinates, resulting in a flatter profile. For some classical mean field models like GUE and GOE, it is well known that all of their eigenvectors are uniformly distributed on the unit sphere and are thus completely delocalized. However, for various other random matrix models, such as the random Schr\"{o}dinger operator, random band matrices, sparse matrices, and heavy-tailed random matrices, localized eigenvectors may exist. Indeed, the problem of localization-delocalization transition, induced by the  changes in certain key parameters, is of great interest in Random Matrix Theory. We refer to Section \ref{s.ref review} for a more detailed review of the related literature.

Despite the rough descriptions of localization and delocalization as above, the mathematical definitions of them are not unique. For delocalization, two major definitions have been adopted in the literature. They are  {\it sup-norm delocalization} and {\it no-gaps delocalization}. For a unit vector $u=(u_1, \ldots, u_n)\in \mathbb{C}^n$, we say that $u$ satisfies the sup-norm delocalization if $\|u\|_\infty\leq \log^\gamma n/n$ for some constant $\gamma>0$.
On the other hand, no-gaps delocalization is defined as the existence of a nice function $f:(0,1) \to (0,1)$ such that for any $\epsilon \in (0,1)$ and any subset $I \subseteq \{1,2,\ldots,n\}$ with $|I| \ge \epsilon n$, one has
$(\sum_{i \in I} |u_i|^{2})^{1/2} \ge f(\epsilon)$.
For instance, for the eigenvectors of random matrices such as the Wigner matrices, the sup-norm delocalization was first established in \cite{ESY09-1}, and the no-gaps delocalization was first established in \cite{RV16}. 
One may easily notice that these two types of delocalization results cannot imply each other, and indeed the proof mechanisms are rather different.
For localization, there have been also various  definitions, but most of them are formulated as the decay of the Green function or the fractional moment of Green function. In case of the random band matrix, we refer to \cite{Sch09} for instance.

In this paper, we aim at establishing a localization-delocalization transition for the right singular vector associated with the smallest singular value (bottom singular vector) of a tall random matrix  when the fatness of the matrix entry distribution varies. Despite a localization-delocalization transition for the top singular vector, i.e., the right singular vector associated with the largest singular value,  induced by the fatness of the entry distribution, being well-known, the mechanism for the phase transition for the bottom one seems rather different. In the sequel, we state our model and objectives more precisely.

Let $X = (x_{ij})$ be an $N \times n$ rectangular random matrix with i.i.d.~entries, where $N = \lceil \mathsf{a} \cdot n \rceil$ for a constant $\mathsf{a} > 1$. Assume that each $x_{ij}$ satisfies
$
	\prob\{|x_{ij}| > t\} \sim t^{-\alpha},  \alpha > 0.
$
It is known that for the largest singular values of $X$, a phase transition occurs when $\alpha$ goes across $4$. When $\alpha>4$,  the largest singular values of $N^{-\frac12}X$ converge to the square root of the right end point of the Marchenko-Pastur law (MP law), $1+\sqrt{\mathsf{a}}$, and the fluctuation is given by an Airy point process and follows the Tracy-Widom type law \cite{BS88, DY1}. When $\alpha<4$, it is known that the largest singular values of $X$ are asymptotically given by the largest entries of $X$, and are thus Poisson and fluctuate on a scale of $N^{2/\alpha}$; see \cite{So06, ABP09}. From the discussions in \cite{So06, ABP09} and the definition $s_{\max}(X)=\sup_{\|u\|_2=1 }\|Xu\|_2$, one can readily see that the top singular vectors should be completely localized in the sense that most of their mass are distributed on the locations corresponding to a few largest entries of the $X$. 
When one turns to the smallest singular values, the phase transition is drastically different from the largest ones in the sense that it is more robust and also more sophisticated. From \cite{Tikhomirov15}, one learns that a second moment condition $\mathbb{E}|x_{ij}|^2=1$ is enough for the convergence of the smallest singular value to the square root of the left edge of the  MP law, $1-\sqrt{\mathsf{a}}$, and thus $\alpha>2$ is sufficient for this convergence. It has also been shown recently in \cite{BLX23+} that a phase transition for the fluctuation of the smallest singular value occurs when $\alpha$ goes across $8/3$. More specifically, when $\alpha> 8/3$, the fluctuation is given by Tracy-Widom law, but when $\alpha\in (2, 8/3)$, the fluctuation is Gaussian.  In contrast, the limiting behavior of the smallest singular value is much less known in the case $\alpha\in (0,2)$.  To the best of our knowledge, in the regime $\alpha\in (0,2)$, the following lower and upper bounds can be obtained  from the literature: as a special case in \cite{Tikhomirov16}, one has a lower bound 
$s_{\min}(X)\gtrsim \sqrt{n}$ with high probability, while as a consequence of the global law in \cite{BDG09}, one can derive an upper bound $s_{\min}(X)\ll n^{1/\alpha}$. The mechanism to identify the typical size and precise limiting behavior of $s_{\min}(X)$ seems still missing.

Recall the variational formula for the smallest singular value $s_{\min}(X)$ of $X$,
\begin{equation}\label{eq: 97}
	s_{\min}(X) = \inf_{\lVert u \rVert_{2}=1} \lVert X u \rVert_{2},
\end{equation}
In the case $\alpha>2$, except for a few leading columns, the $\ell^2$-norms of all remianing columns are of order $\sqrt{n}$. Heuristically, the bottom singular vector $\mathfrak{u}$, as a minimizer of the variational problem in (\ref{eq: 97}), serves as the coefficient vector for the linear combination of those columns with similar lengths. Hence, a delocalized $\mathfrak{u}$ that can induce nontrivial cancellations among these columns is expected. In the case $\alpha<2$, the picture is drastically different. It is easy to check that the majority of the columns have lengths of order $n^{1/\alpha}$, and the leading ones are of order $n^{2/\alpha}$. More importantly, there are also $o(n)$ columns that have lengths much smaller than $n^{1/\alpha}$. For instance, in the case that $x_{ij}$'s are exactly $\alpha$-stable, according to the exact density of the one-sided L\'{e}vy stable distribution near the point $0$ in \cite{PG10}, one may see that there are $o(n)$ columns of $X$, whose $\ell^2$-norms are all of order $n^{1/\alpha} (\log n)^{(\alpha-2)/2\alpha}$. A natural conjecture is then that $\mathfrak{u}$ may tend to concentrate on the coordinates of these small columns, as a minimizer in (\ref{eq: 97}).
 If we choose a vector $u$ which concentrates on the coordinates of the $o(n)$ amount of  small columns, one can get an upper bound of $s_{\min}(X)$ that is much smaller than $n^{1/\alpha}$. Certainly,  it does not imply that $\mathfrak{u}$ has to be mainly supported on these coordinates, since the possible collinearity of typical and large columns may induce the smallness of the smallest singular value as well. Nevertheless, it motivates us to consider the possibility of the localization of $\mathfrak{u}$ when $\alpha\in (0,2)$. Indeed, in this paper, we will take advantage of the small columns to obtain a localization result for $\mathfrak{u}$ in case $\alpha\in (0,2)$ and further show a localization-delocalization transition of $\mathfrak{u}$ when $\alpha$ goes across $2$. 
 
 We remark here that the definitions of localization and delocalization adopted in our results are different from previous ones, but they are inspired by the definition of no-gaps delocalization from \cite{RV16}. More specifically, the localization and delocalization properties in this paper are essentially compressibility and incompressibility of vectors in \cite{RV08}.  Roughly speaking, a vector is compressible if it is sufficiently close to a sparse vector, and it is incompressible if it is not compressible.  We further remark that in the delocalization case,   the relation between the eigenvector/singular vector profile with the lower bound of the smallest singular value of certain random matrices has been readily seen in \cite{RV16}. And it is also well-known that the concepts of compressible and incompressible vectors are widely used in the works of the lower bound of the smallest singular values of random matrices; see \cite{LPRTj05, RV09, Tikhomirov20, Vershynin14, BR17, Livshyts21, Cook18, LTV21, GT23} for instance. Indeed, our discussions on both localization and delocalization regimes are inspired by the works related to the smallest singular values, especially \cite{Tikhomirov16} and \cite{RV16}. 
 
 Our main results will be detailed in the next subsection, and  an outline of the proof strategy will be stated in Section \ref{ss.proof strategy}.

\subsection{Results}

\begin{definition}[Rectangular random matrix]\label{def: 78}
Let $X=(x_{ij})$ be an $N\times n$ random matrix with i.i.d.~entries.
Assume $N = \lceil \mathsf{a}\cdot n \rceil$ for a constant $\mathsf{a}>1$.
Suppose that  for a constant $\alpha>0$, we have
\begin{equation}\label{eq: 81}
	C_{\ell}t^{-\alpha}\le \prob\{ |x_{11}| > t \} \le C_{u}t^{-\alpha}, \quad t\ge T_{0},
\end{equation}
with some constants $C_{\ell},C_{u},T_{0}>0$.
\end{definition}

\begin{assumption}\label{assump: 89}
The random variable $x_{11}$ is symmetrically distributed.
\end{assumption}

\begin{remark}
The assumption of symmetry in the distribution is made to streamline the proof.
See \eqref{eq: 248} and \eqref{eq: 1125}, along with their descriptions, for more details.
\end{remark}

Let us write $[n] \equiv \{1, 2, \ldots, n\}$ for brevity. For a vector $\mathbf{x} \in \mathbb{R}^n$ and a subset $I \subseteq [n]$, we denote by $\mathbf{x}_I\in \mathbb{R}^{|I|}$ the vector obtained from $\mathbf{x}$ by deleting the components with indices in $I^c$. Our first result shows that the mass of the bottom singular vector is concentrated on the components of order larger than or equal to $(\log n/n)^{1/2}$, when $\alpha\in (0,2)$.

\begin{theorem}\label{thm: ssv}
Let $X = (x_{ij})$ be as in Definition \ref{def: 78} with $\alpha \in (0, 2)$. Suppose Assumption \ref{assump: 89} holds. Let $\mathfrak{u}$ be the bottom singular vector of $X$. For any $\delta \in (0, 1)$ and $K > 0$, there exists a constant $c_{\ref{thm: ssv}}\equiv c_{\ref{thm: ssv}}(\alpha, \delta, K) > 0$ such that the following holds. Define the subset $\hat{I} \subseteq [n]$ depending on $\mathfrak{u}$ by
\begin{equation*}
    \hat{I} \equiv \hat{I}(\mathfrak{u}) = \Big\{ i \in [n] : |\mathfrak{u}_i| > \sqrt{\frac{c_{\ref{thm: ssv}} \log{n}}{n}} \Big\}.
\end{equation*}
We have $\lVert \mathfrak{u}_{\hat{I}} \rVert_2 \ge \sqrt{1 - \delta} $ with probability at least $1 - O(n^{-K})$. 
\end{theorem}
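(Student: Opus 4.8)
The plan is to prove that, with probability $1-O(n^{-K})$, the bottom singular vector $\mathfrak u$ is not ``spread'', where I call a unit vector $u\in\R^n$ \emph{spread} if, with $J(u):=\{i\in[n]:|u_i|\le\sqrt{c_{\ref{thm: ssv}}\log n/n}\}=\hat I(u)^c$, one has $\|u_{J(u)}\|_2>\sqrt\delta$. Since $\|\mathfrak u_{\hat I(\mathfrak u)}\|_2^2=1-\|\mathfrak u_{J(\mathfrak u)}\|_2^2$, this is precisely the assertion of Theorem~\ref{thm: ssv}. Using $\|X\mathfrak u\|_2=s_{\min}(X)$ from \eqref{eq: 97}, it is enough to produce a deterministic threshold $\sigma_\star=o(n^{1/\alpha})$ for which, with probability $1-O(n^{-K})$, one has simultaneously $s_{\min}(X)\le\sigma_\star$ and $\|Xu\|_2>\sigma_\star$ for every spread unit vector $u$: for then $\mathfrak u$ cannot be spread, since otherwise $s_{\min}(X)=\|X\mathfrak u\|_2>\sigma_\star$. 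I take $\sigma_\star:=C_0\,n^{1/\alpha}(\log n)^{(\alpha-2)/(2\alpha)}$ with $C_0=C_0(\alpha,\mathsf a)$ a large constant; note $\sigma_\star\gg\sqrt n$ because $1/\alpha>1/2$.

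\noindent\textbf{The upper bound $s_{\min}(X)\le\sigma_\star$, via atypically short columns.} This is the quantitative version of the heuristic following \eqref{eq: 97}. I would analyze $\prob\{\|X_{\cdot i}\|_2\le\sigma\}$ for $\sigma$ of the above form by a large-deviation estimate for $\sum_{j=1}^N x_{ji}^2$: squeezing this sum below $\sigma^2$ forces every entry below the balancing scale $(\sigma^2/N)^{1/(2-\alpha)}$, an event of probability $\exp(-\Theta(N^{2/(2-\alpha)}\sigma^{-2\alpha/(2-\alpha)}))$, which for $\sigma=\sigma_\star$ equals $n^{-\theta_0}$ with $\theta_0=\theta_0(C_0,\alpha,\mathsf a)\in(0,1)$ once $C_0$ is large. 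By a Chernoff bound, with probability $1-\exp(-cn^{1-\theta_0})$ there is at least one index $i$ (in fact $\asymp n^{1-\theta_0}$ of them) with $\|X_{\cdot i}\|_2\le\sigma_\star$, and then $s_{\min}(X)\le\min_i\|Xe_i\|_2\le\sigma_\star$.

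\noindent\textbf{The lower bound $\|Xu\|_2>\sigma_\star$ for spread $u$.} For a fixed spread $u$ set $\zeta_j:=\langle(\text{row }j\text{ of }X),u\rangle=\sum_i u_ix_{ji}$; the $\zeta_j$ are i.i.d.\ and symmetric and $\|Xu\|_2^2=\sum_{j=1}^N\zeta_j^2$. Isolating the contribution of one large entry $x_{ji}$ with $i\in J(u)$, while the remaining sum stays below $\sigma_\star$ (it has typical size $\lesssim(\sum_i|u_i|^\alpha)^{1/\alpha}\le n^{1/\alpha-1/2}\ll\sigma_\star$; here Assumption~\ref{assump: 89} keeps the two-sided estimates clean), gives $\prob\{|\zeta_1|>\sigma_\star\}\gtrsim\sigma_\star^{-\alpha}\sum_{i\in J(u)}|u_i|^\alpha$. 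Since $\|u_{J(u)}\|_\infty\le\sqrt{c_{\ref{thm: ssv}}\log n/n}$ and $\|u_{J(u)}\|_2\ge\sqrt\delta$, concavity of $t\mapsto t^{\alpha/2}$ yields $\sum_{i\in J(u)}|u_i|^\alpha\ge\delta\,(c_{\ref{thm: ssv}}\log n/n)^{(\alpha-2)/2}$ (the extremal $u$ being as concentrated as the $\ell^\infty$ constraint on $u_{J(u)}$ allows), and a short computation reduces this to $\prob\{|\zeta_1|>\sigma_\star\}\gtrsim_{\,c_{\ref{thm: ssv}},\delta}\,n^{-\alpha/2}$, uniformly over spread $u$. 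By independence, $\prob\{\|Xu\|_2\le\sigma_\star\}\le\prob\{|\zeta_j|\le\sigma_\star\ \forall j\}\le(1-cn^{-\alpha/2})^N\le\exp(-c'n^{1-\alpha/2})$, which is $o(n^{-K})$ for each fixed spread $u$. To accommodate the dependence of $\hat I(u)$ on $u$ in a uniform statement, one may split off $u_{\hat I}$ by projecting onto the orthogonal complement of $\mathrm{span}\{X_{\cdot i}:i\in\hat I\}$; this projection is independent of $\eta:=X_{\hat I^c}u_{\hat I^c}$ because the columns in $\hat I$ and in $\hat I^c$ are independent, and the union over the $\exp(o(n))$ possible sets $\hat I$ is harmless.

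\noindent\textbf{The main obstacle.} The crux is upgrading the last estimate to hold \emph{uniformly} in $u$. The spread unit vectors have metric entropy $\exp(\Theta(n))$ --- the genuinely flat directions $u_i\asymp n^{-1/2}$ already fill the sphere --- whereas the per-vector bound $\exp(-c'n^{1-\alpha/2})$ has exponent $1-\alpha/2<1$, so a plain union bound over an $\varepsilon$-net does not close; worse, controlling $\|Xu\|_2$ through the operator norm $\|X\|_{\mathrm{op}}\asymp n^{2/\alpha}$ would force $\varepsilon\lesssim n^{-1/\alpha}$. Overcoming this is where the real work lies; in the spirit of \cite{Tikhomirov16} and \cite{RV16}, I expect one must run a chaining / multiscale argument that decomposes the spread set by the dyadic profile of $u$ and cascades over the effective sparsity level $|\hat I(u)|$, so that at each scale the covering number is matched by a correspondingly sharpened anti-concentration estimate --- for which conditioning on the locations of the largest entries of $X$ is the natural tool. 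The delicate regime is that of spread vectors whose $\ell^2$-mass sits at the cutoff level $\sqrt{c_{\ref{thm: ssv}}\log n/n}$, where the heavy-tail mechanism used above is weakest; this is the regime that pins down how small $c_{\ref{thm: ssv}}$ must be taken.
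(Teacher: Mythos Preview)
Your overall architecture---produce a threshold $\sigma_\star$ with $s_{\min}(X)\le\sigma_\star$ w.h.p.\ and $\inf_{u\text{ spread}}\|Xu\|_2>\sigma_\star$ w.h.p., then conclude $\mathfrak u$ is not spread---matches the paper exactly, and your upper bound via short columns is essentially the paper's Theorem~\ref{thm: upper bound}. The gap is in the lower bound, and it is a real one, not merely a missing detail.

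Your per-vector mechanism (one heavy entry in a row forces $|\zeta_j|>\sigma_\star$) yields $\prob\{|\zeta_j|>\sigma_\star\}\gtrsim n^{-\alpha/2}$ and hence $\prob\{\|Xu\|_2\le\sigma_\star\}\le\exp(-cn^{1-\alpha/2})$. This rate is genuinely what that mechanism gives; it does not improve on sparse sub-vectors, and it can never beat the $\exp(\Theta(n))$ entropy of the spread set. Your projection idea (condition on $X_{\hat I}$, union over $\hat I$) removes the dependence on the \emph{large} coordinates, but the remaining vectors $u_{\hat I^c}$ still fill a set of dimension $\approx n$ with entropy $\exp(\Theta(n))$, so nothing has been gained. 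Chaining over dyadic profiles does not help either unless the per-vector rate itself is upgraded to $\exp(-cN)$.

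The paper closes this gap with two ideas you are missing. First, it works not with $X$ but with the rescaling $\widetilde X=N^{-1/\alpha+\eps_N}X$, $N^{\alpha\eps_N}\asymp\log N$, and decomposes $\widetilde X=\widetilde X_D+\widetilde X_{D^c}$ where $\widetilde X_D$ keeps only entries in a bounded window $D$; this $\widetilde X_D$ is essentially a sparse matrix of density $\asymp\log N/N$ with $\|\widetilde X_D\|=O(\sqrt{\log N})$. Anti-concentration is then drawn from the \emph{light-tail} part $\widetilde X_D$ via Rogozin's inequality, not from a single heavy entry: each row of $\widetilde X_D v'$ fails to concentrate with probability bounded below by a \emph{constant} $\tau$ (not $n^{-\alpha/2}$), and a distance-to-subspace estimate then gives the crucial $\exp(-cN)$ per vector, with $c$ depending only on $\mathsf a$. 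Second, the net is not placed on all spread vectors: any spread $v$ has, by pigeonhole on $I^c$, a $\lceil c''n\rceil$-sparse piece $v'$ with $\|v'\|_2\gtrsim\delta\sqrt{c''}$ and $\|v'\|_\infty\le cN^{(-1+\alpha\eps_N)/2}$; the net $\mathcal N_0$ of such sparse pieces has size $\exp(O(c''n\log(1/c'')))$. Tikhomirov's Proposition~\ref{prop: 153} converts the per-$v'$ distance bound into $\inf_{v\in\mathcal S_0}\|\widetilde Xv\|_2\ge L-\delta'\|\widetilde X_D\|$, so only the bounded operator norm $\|\widetilde X_D\|$ enters, and choosing $c''$ small makes the union bound close. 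In short: the right move is to restrict to a \emph{sparse sub-vector inside the small coordinates} and to extract anti-concentration from the \emph{truncated, rescaled} matrix---not to project out the large-coordinate columns, and not to rely on heavy tails for the small-ball input.
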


\begin{remark}
In fact, we can slightly extend Theorem \ref{thm: ssv} to the case where the $x_{ij}$'s are independent but not necessarily identically distributed. Instead of \eqref{eq: 81}, we may assume that the following bound on the tail distribution holds uniformly in $i$ and $j$:
$
    C_{\ell} t^{-\alpha} \le \prob\{ |x_{ij}| > t \} \le C_{u} t^{-\alpha}, t \ge T_{0}.
$
Then, the same conclusion can be obtained without any major modifications to the proof.
Additionally, we may also consider regularly varying tails: $\prob\{ |x_{ij}| > t \} = L(t) t^{-\alpha}$, where $L$ is a slowly varying.
\end{remark}

\begin{remark}
The mass of $\mathfrak{u}$ is mostly concentrated on the set $\hat{I}$ if we choose $\delta$ to be sufficiently small, and by the Markov inequality, the cardinality of $\hat{I}$ is bounded by $O(n / \log{n})$. From the simulation study in Figure \ref{fig:1}, it seems that unlike the top singular vector in case $\alpha\in (0,4)$, which is completely localized, the localization length of $\mathfrak{u}$ seems much longer. Nevertheless, it is not clear from our proof if the current upper bound $n / \log{n}$ is close to be optimal.  We would like to mention here,  it was observed in \cite{BVZ23} that the eigenvectors of a non-Hermitian Toeplitz matrix with a small random perturbation are localized at the scale of $n / \log{n}$ as well. 
\end{remark}

~

\begin{figure}[t]
	\begin{minipage}[t]{0.3\textwidth}
		\includegraphics[width=\textwidth]{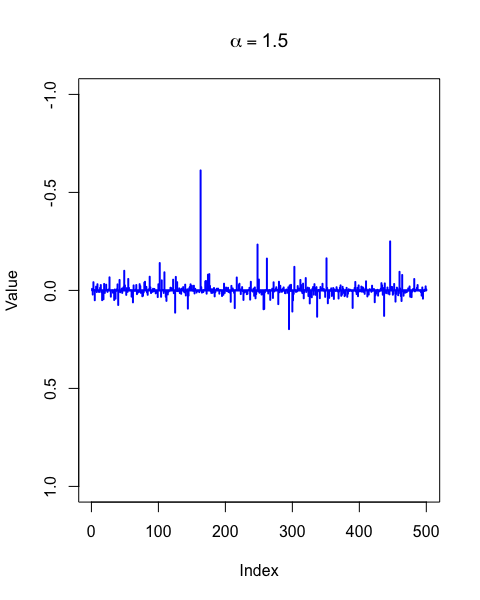}
	\end{minipage}
	\begin{minipage}[t]{0.3\textwidth}
		\includegraphics[width=\textwidth]{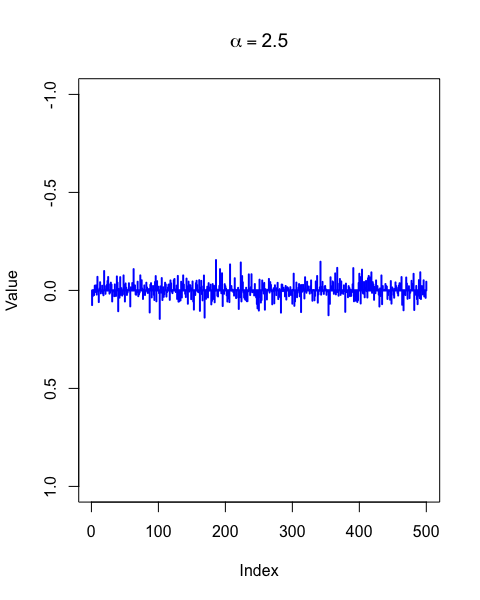}
	\end{minipage}
	\begin{minipage}[t]{0.3\textwidth}
		\includegraphics[width=\textwidth]{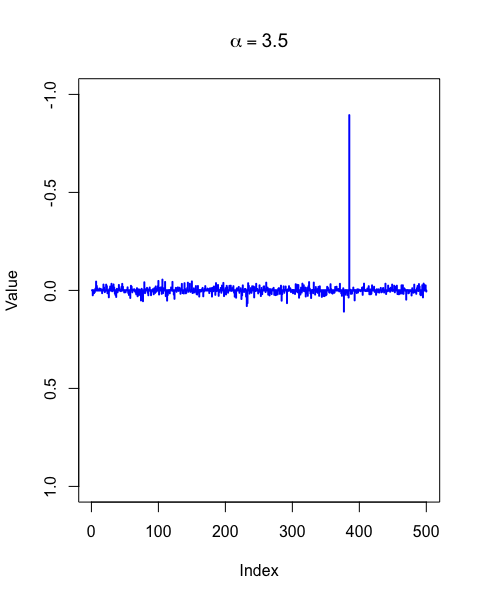}
	\end{minipage}
	\caption{Profiles of bottom singular vectors ($\alpha = 1.5, 2.5$) and top singular vector ($\alpha = 3.5$)}
	\label{fig:1}
\end{figure}

The above theorem implies that there is a small portion of coordinates which carry a majority of the total mass in $\mathfrak{u}$. In particular, it implies the failure of no-gaps delocalization when $\alpha \in (0, 2)$. We also note that this result does not directly exclude the possibility of sup-norm delocalization.

Let $\hat{I}$ be as in Theorem \ref{thm: ssv}. The following corollary is easily verified by setting $\hat{J} = [n] \backslash \hat{I}$.

\begin{corollary}\label{cor: 179}
Suppose the assumptions in Theorem \ref{thm: ssv} hold. For any arbitrarily small $\delta > 0$, we can find a subset $\hat{J} \subset [n]$ such that $|\hat{J}| = n - o(n)$ but $\lVert \mathfrak{u}_{\hat{J}} \rVert_2 < \sqrt{\delta} $ with high probability.
\end{corollary}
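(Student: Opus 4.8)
The goal is to derive Corollary~\ref{cor: 179} directly from Theorem~\ref{thm: ssv}, so this is a short deduction rather than a fresh argument. My plan is to take the set $\hat I = \hat I(\mathfrak u)$ produced by Theorem~\ref{thm: ssv} (with the free parameter $\delta$ of that theorem renamed, say, $\delta'$, to avoid clashing with the $\delta$ in the corollary's statement), and simply set $\hat J = [n] \setminus \hat I$. By the Markov-inequality observation already recorded in the remark following Theorem~\ref{thm: ssv}, since $\|\mathfrak u\|_2 = 1$ and every coordinate in $\hat I$ has $|\mathfrak u_i|^2 > c_{\ref{thm: ssv}} \log n / n$, the cardinality bound $|\hat I| \le n / (c_{\ref{thm: ssv}} \log n) = o(n)$ holds deterministically on the event that $\hat I$ is defined as stated. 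Hence $|\hat J| = n - |\hat I| = n - o(n)$, which is the first required property.

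For the second property, I would use orthogonal decomposition: since $\hat J = \hat I^c$, we have $\|\mathfrak u_{\hat J}\|_2^2 = \|\mathfrak u\|_2^2 - \|\mathfrak u_{\hat I}\|_2^2 = 1 - \|\mathfrak u_{\hat I}\|_2^2$. Theorem~\ref{thm: ssv}, applied with parameter $\delta' > 0$ (and any fixed $K$, e.g. $K = 1$), gives $\|\mathfrak u_{\hat I}\|_2 \ge \sqrt{1 - \delta'}$, hence $\|\mathfrak u_{\hat I}\|_2^2 \ge 1 - \delta'$, on an event of probability $1 - O(n^{-K})$. On that event $\|\mathfrak u_{\hat J}\|_2^2 \le \delta'$, i.e. $\|\mathfrak u_{\hat J}\|_2 \le \sqrt{\delta'}$. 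Given the target $\delta > 0$ in the corollary, I would choose $\delta' = \delta$ (or any $\delta' \le \delta$), so that $\|\mathfrak u_{\hat J}\|_2 \le \sqrt{\delta} < \sqrt{\delta}$ — here one should be slightly careful to get a strict inequality, which is trivially arranged by instead picking $\delta' = \delta/2$, giving $\|\mathfrak u_{\hat J}\|_2 \le \sqrt{\delta/2} < \sqrt{\delta}$.

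There is essentially no obstacle here; the only things to watch are bookkeeping points: (i) keeping the two roles of "$\delta$" separate, (ii) noting that the constant $c_{\ref{thm: ssv}}$ now depends on the chosen $\delta'$ and the chosen $K$ but this is harmless since both are fixed, and (iii) recording that "with high probability" in the corollary means probability $1 - O(n^{-K})$ for the $K$ selected (one may take $K$ arbitrarily large, so the statement holds with probability $1 - O(n^{-K})$ for every $K$). The set $\hat J$ depends on $\mathfrak u$, exactly as $\hat I$ does, which is consistent with the phrasing "we can find a subset". Thus the corollary follows immediately, and I would present it as a two-line computation: the cardinality bound from Markov's inequality plus the Pythagorean identity combined with Theorem~\ref{thm: ssv}.
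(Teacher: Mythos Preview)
Your proposal is correct and follows exactly the paper's approach: set $\hat J = [n]\setminus \hat I$, use the Markov bound $|\hat I|\le n/(c_{\ref{thm: ssv}}\log n)=o(n)$, and combine $\|\mathfrak u_{\hat J}\|_2^2 = 1-\|\mathfrak u_{\hat I}\|_2^2$ with Theorem~\ref{thm: ssv}. The paper records this as a one-line remark (``easily verified by setting $\hat J=[n]\setminus\hat I$''), and your additional care about choosing $\delta'=\delta/2$ for the strict inequality and tracking the dependence of $c_{\ref{thm: ssv}}$ on the parameters is appropriate.
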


Moreover, we may extend the above results to  the right singular vector corresponding to the $k$-th smallest singular value with $k \ll n$.

\begin{theorem}\label{thm: 220}
Let $X = (x_{ij})$ be as in Definition \ref{def: 78} with $\alpha \in (0, 2)$. Suppose Assumption \ref{assump: 89} holds. Let $\mathfrak{b} \in (0, 1/2)$ be a constant. Consider $k=O(n^{1-2\mathfrak{b}})$. Let $\mathfrak{u}^{(k)}$ be the unit right singular vector corresponding to the $k$-th smallest singular value. For any $\delta \in (0, 1)$ and $K > 0$, there exists a constant $c_{\ref{thm: 220}} > 0$ such that the following holds. Define the subset $\hat{I}_{k} \subset [n]$ depending on $\mathfrak{u}^{(k)}$ by
\begin{equation*}
    \hat{I}_{k} \equiv \hat{I}_{k}(\mathfrak{u}^{(k)}) = \Big\{ i \in [n] : |\mathfrak{u}^{(k)}_{i}| > \sqrt{\frac{c_{\ref{thm: 220}} \log{n}}{n}} \Big\}.
\end{equation*}
We have $\lVert \mathfrak{u}^{(k)}_{\hat{I}_{k}} \rVert_2 \ge \sqrt{1 - \delta} $ with probability at least $1 - O(n^{-K})$. The constant $c_{\ref{thm: 220}}$ may depend on $\alpha$ and $\delta$. \end{theorem}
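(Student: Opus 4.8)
\textbf{Proof proposal for Theorem~\ref{thm: 220}.} The plan is to run the proof of Theorem~\ref{thm: ssv} essentially verbatim, replacing the spectral quantity $s_{\min}(X)$ by $s_k(X)$. The proof of Theorem~\ref{thm: ssv} factors into two pieces that each hold with probability $1-O(n^{-K})$: an upper bound $s_{\min}(X)\le\sigma_n$, where $\sigma_n\asymp n^{1/\alpha}(\log n)^{(\alpha-2)/(2\alpha)}$ is the ``small-column scale'', obtained by testing \eqref{eq: 97} on a unit vector supported on the coordinates of a few columns of $X$ of $\ell^2$-norm $\le\sigma_n$; and a uniform lower bound asserting that \emph{every} spread unit vector $v$ --- one with $\lVert v_{\hat I(v)}\rVert_2<\sqrt{1-\delta}$ for $\hat I(v)=\{i:|v_i|>\sqrt{c_0\log n/n}\}$ --- satisfies $\lVert Xv\rVert_2>\eta_n$, where $\eta_n$ sits at the same polynomial scale but with a constant that grows without bound as $c_0=c_{\ref{thm: ssv}}$ shrinks. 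Combining these, $\lVert X\mathfrak u\rVert_2=s_{\min}(X)\le\sigma_n<\eta_n$ forces $\mathfrak u$ not to be spread. The lower-bound ingredient is a union bound over the spread vectors and never refers to which singular vector we are looking at, so I would keep it intact, with $c_0=c_{\ref{thm: 220}}$ and with its internal cutoff raised from $\sigma_n$ to $2\sigma_n$ (this only worsens constants in $c_0$). Since $\mathfrak u^{(k)}$ lies in the span $V_k$ of the $k$ bottom right singular vectors and every unit $v\in V_k$ has $\lVert Xv\rVert_2\le s_k(X)$, it then suffices to prove $s_k(X)\le 2\sigma_n$: that places all of $V_k$, in particular $\mathfrak u^{(k)}$, outside the spread class, which is the assertion $\lVert\mathfrak u^{(k)}_{\hat I_k}\rVert_2\ge\sqrt{1-\delta}$.

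The only genuinely new work is thus the bound $s_k(X)\le 2\sigma_n$ for $k=O(n^{1-2\mathfrak b})$, and I would obtain it in two steps. First, take the small-column scale to be $\sigma_n=c(\mathfrak b)\,n^{1/\alpha}(\log n)^{(\alpha-2)/(2\alpha)}$ and show that with probability $1-O(n^{-K})$ there are at least $2k$ columns of $X$ with $\ell^2$-norm $\le\sigma_n$: a given column has this property whenever all its $N$ entries are $\le\tau\asymp(\sigma_n^2/N)^{1/(2-\alpha)}\asymp_{\mathfrak b} n^{1/\alpha}(\log n)^{-1/\alpha}$ (so that the truncated second-moment sum has mean $\le\tfrac12\sigma_n^2$) and that sum does not exceed its mean by a factor $2$; by the upper tail in \eqref{eq: 81} the first event has probability $(1-\prob\{|x_{11}|>\tau\})^N\ge\exp(-C_uN\tau^{-\alpha})\ge n^{-\mathfrak b}$ once $c(\mathfrak b)$ is large enough, and the second is a routine Bernstein estimate, so $p:=\prob\{\lVert X_{\cdot j}\rVert_2\le\sigma_n\}\ge\tfrac12 n^{-\mathfrak b}$; a Chernoff bound over the $n$ independent columns then yields at least $\tfrac14 n^{1-\mathfrak b}\ge 2k$ small columns except with probability $e^{-cn^{1-\mathfrak b}}$. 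Second, given such a set $S$ with $|S|=m\ge 2k$, zero-pad the span of the right singular vectors of the submatrix $X_S$ associated with its $k$ smallest singular values to a $k$-dimensional subspace of $\mathbb R^n$; the min--max formula $s_k(X)=\min_{\dim V=k}\max_{0\neq u\in V}\lVert Xu\rVert_2/\lVert u\rVert_2$ gives $s_k(X)\le\sigma_{m-k+1}(X_S)$, and then
\[
\sigma_{m-k+1}(X_S)^2\le\frac{1}{m-k+1}\lVert X_S\rVert_F^2=\frac{1}{m-k+1}\sum_{j\in S}\lVert X_{\cdot j}\rVert_2^2\le\frac{m}{m-k+1}\,\sigma_n^2\le 2\sigma_n^2 ,
\]
so $s_k(X)\le\sqrt2\,\sigma_n\le 2\sigma_n$, as needed.

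\textbf{Main obstacle.} The delicate point is the reconciliation of scales in the first paragraph: the (inflated) small-column scale $\sigma_n$ and the delocalization cost $\eta_n$ both live at the \emph{same} polynomial order $n^{1/\alpha}(\log n)^{(\alpha-2)/(2\alpha)}$, so the whole argument comes down to a comparison of constants. Pushing $\sigma_n$ up so that a single column is small with merely polynomially small probability $n^{-\mathfrak b}$ --- which is exactly what produces $\gg k$ small columns when $k$ is a power of $n$ --- forces $c(\mathfrak b)$, hence $s_k(X)$, upward, and the only countervailing freedom is to shrink $c_{\ref{thm: 220}}$, whose effect on $\eta_n$ is a gain of order $c_{\ref{thm: 220}}^{-(2-\alpha)/(2\alpha)}$; one checks these can be matched (so that $\eta_n>2\sigma_n\ge s_k(X)$) for every fixed $\mathfrak b\in(0,1/2)$, at the cost of $c_{\ref{thm: 220}}$ degrading and the margin vanishing as $\mathfrak b\downarrow0$ --- which is precisely why $k$ is limited to $O(n^{1-2\mathfrak b})$; shrinking $c_{\ref{thm: 220}}$ only enlarges $\hat I_k$, whose cardinality is still $O(n/\log n)$ for fixed $\mathfrak b$ by Markov. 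A secondary point to verify is that the lower-bound ingredient of Theorem~\ref{thm: ssv} is indeed proved as a statement uniform over all spread unit vectors --- as any net argument producing it must be --- so that it applies unchanged to $\mathfrak u^{(k)}$ via $\mathfrak u^{(k)}\in V_k$; granting that, no probabilistic input beyond the elementary column count above is required.
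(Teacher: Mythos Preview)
Your proposal is correct and follows essentially the same architecture as the paper: the uniform lower bound over ``spread'' vectors is exactly Proposition~\ref{prop: 240} (stated as an infimum over $\mathcal{S}_0$, hence applicable verbatim to $\mathfrak{u}^{(k)}$), and the only new ingredient is an upper bound $s_{n-k}(X)\lesssim n^{1/\alpha}(\log n)^{(\alpha-2)/(2\alpha)}$, which is precisely Corollary~\ref{cor: 729}; the paper's proof of Theorem~\ref{thm: 220} combines these two in the same way you do, using $\lVert X\mathfrak{u}^{(k)}\rVert_2=s_{n-k}(X)$.

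Where you differ is in the proof of the upper bound. The paper (Corollary~\ref{cor: 729}) defines ``small columns'' entrywise---columns all of whose entries are $\le N^{1/\alpha-\tilde{\eps}_N}$---uses Hoeffding to get $\mathfrak{m}\gtrsim n^{1-\mathfrak{b}}$ such columns, then a dimension-count $(n-k)+\mathfrak{m}>n$ in the max--min formula to bound $s_{n-k}(X)$ by the \emph{operator norm} of the small-column submatrix, and finally controls that operator norm via Seginer's theorem (Proposition~\ref{prop: Seginer}). Your route is more elementary: you define small columns by their $\ell^2$-norm, produce $m\ge 2k$ of them by a Chernoff bound, and then use the averaging inequality $\sigma_{m-k+1}(X_S)^2\le (m-k+1)^{-1}\lVert X_S\rVert_F^2$ together with the min--max formula. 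This sidesteps Seginer entirely and yields the same scale with the same $\mathfrak{b}$-dependence of the constant; the paper's route gives a slightly stronger intermediate statement (an operator-norm bound on $X_S$) at the cost of a heavier tool. Your observation that $c(\mathfrak b)\asymp\mathfrak b^{-(2-\alpha)/(2\alpha)}$ blows up as $\mathfrak b\downarrow 0$, forcing $c_{\ref{thm: 220}}$ to shrink accordingly, matches the paper's implicit dependence of $C_{\ref{cor: 729}}$ on $\mathfrak b$ through the choice of $\tilde{\eps}_N$ in~\eqref{eq: 721}.
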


\begin{corollary}\label{cor: 233}
Suppose the assumptions in Theorem \ref{thm: 220} hold. Let $\mathfrak{b} \in (0, 1/2)$ be a constant. Consider $k=O(n^{1-2\mathfrak{b}})$.  For any arbitrarily small $\delta > 0$, we can find a subset $\hat{J}_{k} \subset [n]$ such that $|\hat{J}_{k}| = n - o(n)$ but $\lVert \mathfrak{u}^{(k)}_{\hat{J}_{k}} \rVert_2 < \sqrt{\delta}$ with high probability.
\end{corollary}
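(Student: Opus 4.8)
The plan is to obtain Corollary~\ref{cor: 233} as an immediate consequence of Theorem~\ref{thm: 220}, in exact analogy with the way Corollary~\ref{cor: 179} is deduced from Theorem~\ref{thm: ssv}. Fix a constant $\mathfrak b\in(0,1/2)$, let $k=O(n^{1-2\mathfrak b})$, and fix the target $\delta>0$. I would first apply Theorem~\ref{thm: 220} with the parameter $\delta/2$ in place of $\delta$ (and an arbitrary $K>0$); this yields a constant $c\equiv c_{\ref{thm: 220}}(\alpha,\delta/2)>0$ and the ($\mathfrak u^{(k)}$-dependent) set $\hat I_k=\{\,i\in[n]:|\mathfrak u^{(k)}_i|>\sqrt{c\log n/n}\,\}$ satisfying $\lVert\mathfrak u^{(k)}_{\hat I_k}\rVert_2\ge\sqrt{1-\delta/2}$ on an event of probability $1-O(n^{-K})$. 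I would then simply set $\hat J_k:=[n]\setminus\hat I_k$.

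It remains to check two elementary points. For the size of $\hat J_k$: since $\mathfrak u^{(k)}$ is by definition a unit vector, each index in $\hat I_k$ contributes more than $c\log n/n$ to $\lVert\mathfrak u^{(k)}\rVert_2^2=1$, so by Markov's inequality $|\hat I_k|\le n/(c\log n)$ deterministically, and hence $|\hat J_k|=n-|\hat I_k|\ge n-n/(c\log n)=n-o(n)$, using only that $c$ is a fixed positive constant and $\log n\to\infty$. For the mass carried by $\hat J_k$: because $\hat I_k$ and $\hat J_k$ partition $[n]$, one has $\lVert\mathfrak u^{(k)}_{\hat J_k}\rVert_2^2+\lVert\mathfrak u^{(k)}_{\hat I_k}\rVert_2^2=\lVert\mathfrak u^{(k)}\rVert_2^2=1$, so on the event above $\lVert\mathfrak u^{(k)}_{\hat J_k}\rVert_2^2\le 1-(1-\delta/2)=\delta/2<\delta$, i.e.\ $\lVert\mathfrak u^{(k)}_{\hat J_k}\rVert_2<\sqrt{\delta}$. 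Intersecting with the (deterministic) normalization, both conclusions hold with probability $1-O(n^{-K})$ for every $K>0$, i.e.\ with high probability.

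I do not expect any genuine obstacle in this deduction: it is a purely formal corollary of Theorem~\ref{thm: 220}. The only points worth flagging are that one should invoke Theorem~\ref{thm: 220} with a strictly smaller tolerance ($\delta/2$ here) in order to promote the non-strict bound ``$\le\sqrt{\delta}$'' to the strict ``$<\sqrt{\delta}$'' demanded by the statement, and that the estimate $|\hat I_k|=o(n)$ is precisely the Markov bound already recorded in the remark following Theorem~\ref{thm: ssv}. All of the substance—namely the containment $\lVert\mathfrak u^{(k)}_{\hat I_k}\rVert_2\ge\sqrt{1-\delta/2}$, which ultimately rests on the identity $\lVert X\mathfrak u^{(k)}\rVert_2=s_{n-k+1}(X)$, a Cauchy-interlacing bound on the $k$-th smallest singular value via the principal submatrix of $X^{\top}X$ indexed by the (polynomially many, hence $\gg k$ when $\mathfrak b>0$) columns of small $\ell^2$-norm, and the uniform spread-vector lower bound underlying Theorem~\ref{thm: ssv}—is already encapsulated in Theorem~\ref{thm: 220}, which we are free to assume here.
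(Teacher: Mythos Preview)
Your proposal is correct and follows exactly the same route as the paper: the paper derives Corollary~\ref{cor: 233} (like Corollary~\ref{cor: 179}) simply by taking $\hat{J}_k=[n]\setminus\hat{I}_k$, using the Markov bound $|\hat{I}_k|\le n/(c_{\ref{thm: 220}}\log n)=o(n)$ and the orthogonal decomposition $\lVert\mathfrak u^{(k)}_{\hat{J}_k}\rVert_2^2=1-\lVert\mathfrak u^{(k)}_{\hat{I}_k}\rVert_2^2$. Your extra care in invoking Theorem~\ref{thm: 220} with $\delta/2$ to secure the strict inequality is a harmless refinement the paper does not spell out.
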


~

Next, let us consider the case $\alpha > 2$. We find that the conclusion of Corollary \ref{cor: 179} does not hold, which creates a striking contrast with the case $\alpha \in (0, 2)$. 

\begin{theorem}\label{thm: 133}
Let $X = (x_{ij})$ be as in Definition \ref{def: 78} with $\alpha > 2$ and assume that $\mathbb{E} x_{11}^{2} = 1$. Suppose Assumption \ref{assump: 89} holds. Recall that $\mathfrak{u}$ is the bottom singular vector of $X$. For any $K > 0$, there exist constants $\eps_{\mathsf{a}}, \delta_{\mathsf{a}} \in (0, 1/2)$ such that the following holds with probability at least $1 - O(n^{-K})$. For $\eps \in (0, \eps_{\mathsf{a}})$, we have $\lVert \mathfrak{u}_{I} \rVert_2 \ge \delta_{\mathsf{a}} $ for all $I \subset \{1, 2, \ldots, n\}$ with $|I| = \lceil (1 - \eps) n \rceil$.
\end{theorem}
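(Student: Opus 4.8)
I plan to derive the theorem from the fact that $\mathfrak u$ minimizes $u\mapsto\|Xu\|_2$ over the unit sphere. The stated conclusion is equivalent to the assertion that $\mathfrak u$ is \emph{incompressible}, i.e.\ that no $J\subset[n]$ with $|J|\le\eps n$ satisfies $\|\mathfrak u_J\|_2^2>1-\delta_{\mathsf a}^2$ (take $I=J^c$). Since $\|X\mathfrak u\|_2=s_{\min}(X)$ and, by \cite{Tikhomirov15} together with $\E x_{11}^2=1$, $s_{\min}(X)\le(\sqrt{\mathsf a}-1+\eta_0)\sqrt n$ off an event of probability $O(n^{-K})$ for any fixed $\eta_0>0$ --- a quantitative form of the convergence to the left Marchenko--Pastur edge, which for $\alpha>2$ is also available through the refined results mentioned in the introduction --- it is enough to prove that
\begin{equation}\label{eq:plan-main}
\inf\Big\{\,\|Xu\|_2:\ \|u\|_2=1,\ \mathrm{dist}_2\big(u,\bigcup\nolimits_{|J|\le\eps n}\R^J\big)\le\delta\,\Big\}\ >\ (\sqrt{\mathsf a}-1+\eta_0)\sqrt n
\end{equation}
with probability $1-O(n^{-K})$, for small constants $\eps,\delta$ to be fixed; the minimizer $\mathfrak u$ then violates the constraint in the infimum, and one takes $\delta_{\mathsf a}=\delta$, $\eps_{\mathsf a}=\eps$.

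The obstruction to proving \eqref{eq:plan-main} by the usual Rudelson--Vershynin net argument is that for $\alpha\in(2,4)$ one has $\|X\|_{\mathrm{op}}\asymp n^{2/\alpha}\gg\sqrt n$, so controlling the net--error by $\|X\|_{\mathrm{op}}$ times the mesh is hopeless. I would instead work on the \emph{good rows}. Fix $\gamma\in(1/\alpha,1/2)$ --- possible precisely because $\alpha>2$ --- and put $\tau=n^{\gamma}$, $A_{ij}=x_{ij}\indic_{\{|x_{ij}|\le\tau\}}$, $G=\{i\in[N]:\max_j|x_{ij}|\le\tau\}$. Because $\gamma>1/\alpha$, the expected number of entries exceeding $\tau$ is $\lesssim n^{2-\alpha\gamma}=o(n)$ and concentrates, so $|G|=N(1+o(1))$ with overwhelming probability; because $\gamma<1/2$, i.e.\ $\tau=o(\sqrt n)$, all row and column $\ell^2$-norms of $A$ are $O(\sqrt n)$ with overwhelming probability, and then Seginer's inequality for $\E\|A\|_{\mathrm{op}}$ combined with Talagrand's concentration for convex $1$-Lipschitz functions yields $\|A\|_{\mathrm{op}}\le\beta_2\sqrt n$ with overwhelming probability, $\beta_2$ a constant. (Assumption~\ref{assump: 89} enters here, so that the entries of $A$ are mean zero.) On $G$ the matrices $X$ and $A$ coincide row by row, and conditionally on $G$ the rows of $A$ indexed by $G$ are i.i.d.\ with symmetric, $\tau$-bounded entries of variance $\sigma_\tau^2=1+o(1)$.

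A unit $u$ within distance $\delta$ of $\bigcup_{|J|\le\eps n}\R^J$ is within $\delta$ of $\R^J$ for some $J$ with $|J|=\lceil\eps n\rceil$, so fix such a $J$ and $u$, say $\|u-v\|_2\le\delta$ with $\mathrm{supp}(v)\subseteq J$; then $\|u_{J^c}\|_2\le\delta$ and $\|u_J\|_2\ge\sqrt{1-\delta^2}$. Writing $A_{G,S}$ for the submatrix of $A$ with rows $G$ and columns $S$, and using $(Xu)_i=(Au)_i$ for $i\in G$,
\begin{equation}\label{eq:plan-split}
\|Xu\|_2\ \ge\ \|(Au)_G\|_2\ =\ \|A_{G,J}u_J+A_{G,J^c}u_{J^c}\|_2\ \ge\ s_{\min}(A_{G,J})\,\|u_J\|_2-\|A\|_{\mathrm{op}}\,\|u_{J^c}\|_2\ \ge\ s_{\min}(A_{G,J})\sqrt{1-\delta^2}-\beta_2\delta\sqrt n .
\end{equation}
Hence \eqref{eq:plan-main} follows once we show that, with probability $1-O(n^{-K})$,
\begin{equation}\label{eq:plan-smin}
s_{\min}(A_{G,J})\ \ge\ (\sqrt{\mathsf a}-\eps')\sqrt n\qquad\text{for every }J\text{ with }|J|=\lceil\eps n\rceil,
\end{equation}
for a small $\eps'$: the right side of \eqref{eq:plan-split} is then $\ge\big((\sqrt{\mathsf a}-\eps')\sqrt{1-\delta^2}-\beta_2\delta\big)\sqrt n$, which exceeds $(\sqrt{\mathsf a}-1+\eta_0)\sqrt n$ once $\eps',\delta,\eta_0$ are small.

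It remains to establish \eqref{eq:plan-smin}. I would take a net of $S^{|J|-1}$ at a fixed mesh $\eta\in(0,1)$, of cardinality $(1+2/\eta)^{|J|}=e^{O(\eps n)}$, bound the net--error by $\|A_{G,J}\|_{\mathrm{op}}\le\|A\|_{\mathrm{op}}\le\beta_2\sqrt n$, and union--bound over the $\binom{n}{\lceil\eps n\rceil}=e^{O(\eps n\log(1/\eps))}$ choices of $J$. For a fixed unit $w$ supported on $J$, conditionally on $G$ one has $\|A_{G,J}w\|_2^2=\sum_{i\in G}Y_i(w)$ with $Y_i(w)=\big(\sum_{j\in J}x_{ij}w_j\big)^2\ge0$ i.i.d.\ of mean $\sigma_\tau^2\|w\|_2^2=\sigma_\tau^2$. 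The crucial input is an upper tail bound \emph{uniform in} $w$: since $\alpha\ge2$ forces $\sum_j|w_j|^\alpha\le\|w\|_2^2=1$, a one--large--coordinate union bound gives $\prob\big(Y_1(w)>t\big)\lesssim_\alpha(\log t)^{\alpha}\,t^{-\alpha/2}$, hence $\E\big[(Y_1(w)-M)_+\big]\to0$ as $M\to\infty$, uniformly in $w$ --- and here $\alpha>2$ is essential. Plugging this into the elementary bound $\E e^{-\lambda Y_1(w)}\le1-\lambda\,\E\big(Y_1(w)\wedge M\big)+\tfrac12\lambda^2M^2$ and optimizing in $\lambda$ gives, for a constant $c=c(\alpha,\mathsf a,\mathrm{law})>0$,
\begin{equation*}
\prob\big(\|A_{G,J}w\|_2^2<(1-\eps_0)\,|G|\,\sigma_\tau^2\big)\ \le\ e^{-cn}\qquad\text{for any fixed }\eps_0\in(0,1).
\end{equation*}
If $\eps$ is small enough that $\eps\log(1/\eps)\ll c$, combining the net bound with $|G|=\mathsf a n(1+o(1))$ and $\sigma_\tau=1+o(1)$ yields \eqref{eq:plan-smin} off an event of exponentially small probability, with $\eps'\to0$ as $\eps,\eta,\eps_0\to0$. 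Together with the bound on $s_{\min}(X)$ this proves the theorem. The main difficulty is precisely this last estimate: one needs the \emph{sharp} constant $\sqrt{\mathsf a}$ in the lower bound for $\|A_{G,J}w\|_2$ --- a generic order-$\sqrt n$ invertibility bound does not beat $s_{\min}(X)=(\sqrt{\mathsf a}-1)\sqrt n$ --- \emph{and} an exponentially small failure probability, so as to survive the union bound over the $\binom{n}{\eps n}$ sparse supports; this is exactly where the normalization $\E x_{11}^2=1$ and the assumption $\alpha>2$ are used decisively.
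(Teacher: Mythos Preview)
Your proposal is correct and shares the paper's high-level skeleton: (i) an upper bound $s_{\min}(X)\le(\sqrt{\mathsf a}-1+o(1))\sqrt n$ with polynomial error probability; (ii) on the bad event, the triangle inequality forces the smallest singular value of a tall, bounded-entry submatrix indexed by the sparse support to fall below that threshold; (iii) an exponential LDP for this singular value that beats the $\binom{n}{\eps n}$-fold union bound. The implementations diverge at step~(iii). The paper removes bad columns $J^c$ and bad rows $\mathcal D_r$, then \emph{enlarges} the column set from $|I^c\cup J^c|\approx\eps n$ to $n/2$ via Cauchy interlacing (so the target submatrix has fixed aspect ratio $\approx 2\mathsf a$), and finally invokes two black boxes---Tikhomirov's truncation theorem \cite{Tikhomirov15} followed by the Feldheim--Sodin LDP \cite{FS10}---to get the exponential bound. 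You instead restrict to good rows $G$, keep the very tall $\approx N\times\eps n$ block $A_{G,J}$, and derive the lower-tail bound for $\|A_{G,J}w\|_2^2$ by hand: a Laplace-transform estimate using uniform integrability of $(\sum_j a_{ij}w_j)^2$ over unit $w$ (a Nagaev-type input, where $\alpha>2$ is decisive), combined with a net on $S^{|J|-1}$. Your route is more self-contained, avoiding both black boxes at the cost of managing the parameters $(\eps_0,M,\eta,\eps)$ explicitly; the paper's route is cleaner once those tools are granted and sidesteps the net argument on the column side entirely.

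Two places where your justification is thin. First, the quantitative bound $\prob\{s_{\min}(X)>(\sqrt{\mathsf a}-1+\eta_0)\sqrt n\}=O(n^{-K})$ does not follow from \cite{Tikhomirov15} alone, which only gives a.s.\ convergence; in the paper this is obtained (Proposition~\ref{prop: 1029}) by interlacing out the few bad columns and applying left-edge rigidity for bounded-support covariance matrices. Second, the uniform tail estimate $\prob(Y_1(w)>t)\lesssim(\log t)^{\alpha}t^{-\alpha/2}$ is a genuine Nagaev inequality---the subgaussian bulk of $\sum_j a_{ij}w_j$ must be separated from the one-large-coordinate contribution---and deserves a precise statement rather than the one-line ``one-large-coordinate union bound'' you give; the result is standard, but this is exactly where $\alpha>2$ enters and it should be made explicit.
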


\begin{corollary}\label{cor: 188}
Suppose the assumptions in Theorem \ref{thm: 133} hold.
Consider $k = o(n)$.
Recall that $\mathfrak{u}^{(k)}$ is the right singular vector corresponding to the $k$-th smallest singular value.
For any $K > 0$, let $\eps_{\mathsf{a}}$ and $\delta_{\mathsf{a}}$ be as in Theorem \ref{thm: 133}. Then, the following holds with probability at least $1 - O(n^{-K})$.
For $\eps \in (0, \eps_{\mathsf{a}})$, we have $\lVert \mathfrak{u}^{(k)}_{I} \rVert_2 \ge \delta_{\mathsf{a}}$ for all $I \subset \{1, 2, \ldots, n\}$ with $|I| = \lceil (1 - \eps) n \rceil$.
\end{corollary}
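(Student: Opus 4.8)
The plan is to deduce incompressibility of $\mathfrak{u}^{(k)}$ from the mechanism behind Theorem~\ref{thm: 133} together with an a priori upper bound on the $k$-th smallest singular value. Order the singular values increasingly, $s_{1}(X) \le \cdots \le s_{n}(X)$, and let $V_{k} \subseteq \R^{n}$ be the span of the right singular vectors of $s_{1}(X), \ldots, s_{k}(X)$. Since $V_{k}$ is invariant under $X^{T}X$, every unit vector $v \in V_{k}$ satisfies $\lVert Xv \rVert_{2} \le s_{k}(X)$, and in particular $\mathfrak{u}^{(k)} \in V_{k}$. Next, the negation of the conclusion of Theorem~\ref{thm: 133} for a unit vector $v$ is exactly that $v$ is compressible in the sense of \cite{RV08}: there exist $\eps \in (0, \eps_{\mathsf{a}})$ and $I \subseteq [n]$ with $|I| = \lceil (1-\eps)n \rceil$ and $\lVert v_{I} \rVert_{2} < \delta_{\mathsf{a}}$, i.e.\ $v$ is within $\ell^{2}$-distance $\delta_{\mathsf{a}}$ of the $\lceil \eps n \rceil$-sparse vector $v_{I^{c}}$. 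As is standard for incompressibility statements, the proof of Theorem~\ref{thm: 133} establishes this by producing a constant $C_{0} > 0$ and an event $\mathcal{E}_{1}$ with $\prob(\mathcal{E}_{1}) \ge 1 - O(n^{-K})$ on which $\lVert Xv \rVert_{2} \ge C_{0}\sqrt{n}$ for every $(\eps_{\mathsf{a}}, \delta_{\mathsf{a}})$-compressible unit vector $v$, where $C_{0}$ can be (and must be) taken strictly larger than $\sqrt{\mathsf{a}} - 1$, the limiting value of $s_{\min}(X)/\sqrt{n}$ under $\E x_{11}^{2} = 1$. Consequently, as soon as we exhibit an event $\mathcal{E}_{2}$ with $\prob(\mathcal{E}_{2}) \ge 1 - O(n^{-K})$ on which $s_{k}(X) < C_{0}\sqrt{n}$ for every $k = o(n)$, then on $\mathcal{E}_{1} \cap \mathcal{E}_{2}$ the subspace $V_{k}$ is disjoint from the compressible set (no dimension count on $V_{k}$ even enters), so $\mathfrak{u}^{(k)}$ — and in fact every unit vector in $V_{k}$, hence all of $\mathfrak{u}^{(1)}, \ldots, \mathfrak{u}^{(k)}$ simultaneously — is incompressible, which is precisely the asserted estimate with the same $\eps_{\mathsf{a}}, \delta_{\mathsf{a}}$.

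The only genuinely new ingredient is the bound $s_{k}(X) < C_{0}\sqrt{n}$ for $k = o(n)$, which I would obtain by truncation. Fix a small constant $\kappa > 0$ with $\kappa < \min\{1/\alpha, \tfrac{1}{2} - \tfrac{1}{\alpha}\}$ (possible since $\alpha > 2$), put $M = n^{1/\alpha + \kappa} = o(\sqrt{n})$, and set $\hat{X} = (x_{ij}\indic\{|x_{ij}| \le M\})$. By Assumption~\ref{assump: 89} the entries of $\hat{X}$ are still centered (no recentering needed), their variance tends to $1$ by \eqref{eq: 81} and $\alpha > 2$, and they are bounded by $M = o(\sqrt{n})$. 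The number $r_{\ast}$ of entries of $X$ with $|x_{ij}| > M$ is stochastically dominated by $\mathrm{Bin}(Nn, C_{u}M^{-\alpha})$, whose mean is of order $n^{2}M^{-\alpha} \asymp n^{1-\alpha\kappa} = o(n)$, so a Chernoff bound gives $r_{\ast} = o(n)$ on an event of probability $\ge 1 - e^{-cn^{1-\alpha\kappa}} = 1 - O(n^{-K})$. Since $X^{T}X - \hat{X}^{T}\hat{X}$ has rank at most $3r_{\ast}$, eigenvalue interlacing under low-rank perturbation yields
\begin{equation*}
	s_{k}(X)^{2} = \lambda_{k}(X^{T}X) \le \lambda_{k + 3r_{\ast}}(\hat{X}^{T}\hat{X}),
\end{equation*}
where $\lambda_{1} \le \cdots \le \lambda_{n}$ denote eigenvalues in increasing order. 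Finally, $\hat{X}$ has i.i.d.\ centered entries of finite variance bounded by $o(\sqrt{n})$, so (rescaled by $N$) its empirical spectral distribution converges to the Marchenko--Pastur law with ratio $1/\mathsf{a}$, whose left edge is $(1 - 1/\sqrt{\mathsf{a}})^{2}$; combined with concentration of the eigenvalue counting function (straightforward because the entries are bounded), this gives, with probability $\ge 1 - O(n^{-K})$, at least $\gamma(\eta)\, n$ eigenvalues of $\hat{X}^{T}\hat{X}$ below $\big((1-1/\sqrt{\mathsf{a}})^{2} + \eta\big)N$, for any fixed $\eta > 0$ and a constant $\gamma(\eta) > 0$. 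Choosing $\eta$ so small that $\big((1-1/\sqrt{\mathsf{a}})^{2} + \eta\big)\mathsf{a} < C_{0}^{2}$ (possible since $(1-1/\sqrt{\mathsf{a}})^{2}\mathsf{a} = (\sqrt{\mathsf{a}}-1)^{2} < C_{0}^{2}$) and noting $k + 3r_{\ast} < \gamma(\eta)n$ for large $n$, the displayed inequality gives $s_{k}(X)^{2} \le \big((1-1/\sqrt{\mathsf{a}})^{2} + \eta\big)N < C_{0}^{2}n$. Intersecting the two events defines $\mathcal{E}_{2}$, and together with $\mathcal{E}_{1}$ this completes the argument; $k = o(n)$ enters only here, since for $k$ of order $n$ the relevant quantile of the Marchenko--Pastur law exceeds $(1-1/\sqrt{\mathsf{a}})^{2}$ and could overshoot $C_{0}^{2}/\mathsf{a}$.

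The main difficulty, in my view, is essentially inherited rather than new. First, one must be sure the compressible-vector estimate from the proof of Theorem~\ref{thm: 133} comes with a constant $C_{0}$ strictly exceeding $\sqrt{\mathsf{a}} - 1$; this separation is exactly what drives the delocalization phenomenon and is what makes Theorem~\ref{thm: 133} itself nontrivial, so nothing there needs to be redone. Second, when $\alpha \in (2,4)$ the operator norm $\lVert X \rVert$ is of order $n^{2/\alpha} \gg \sqrt{n}$, governed by the largest entries, so a naive perturbation bound $\lVert Xv \rVert_{2} \ge \lVert Xw \rVert_{2} - \lVert v - w \rVert_{2}\lVert X \rVert$ would force the compressibility parameter to vanish; the proof of Theorem~\ref{thm: 133} circumvents this by first projecting away the $o(n)$ rows and columns carrying atypically large entries, and that argument carries over to $V_{k}$ verbatim (the bound $\lVert Xv \rVert_{2} \ge C_{0}\sqrt{n}$ for compressible $v$ is obtained through the row-truncated matrix, whose operator norm is $O(\sqrt{n})$). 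The only new estimate — controlling $s_{k}(X)$ for $k = o(n)$ at a polynomial probability rate under merely a second-moment hypothesis — is handled cleanly by the truncation above, precisely because symmetry keeps $\hat{X}$ centered and $\alpha > 2$ keeps it bounded at the subcritical scale $o(\sqrt{n})$, so neither a recentering correction nor extra moment input is required.
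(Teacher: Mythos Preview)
Your approach is correct and follows essentially the same route as the paper: the paper's proof of Corollary~\ref{cor: 188} simply re-runs the Theorem~\ref{thm: 133} argument with $\mathfrak{u}$ replaced by $\mathfrak{u}^{(k)}$ and the event $\mathcal{E}_{3}$ replaced by $\mathcal{E}_{3}^{(k)} = \{N^{-1/2}s_{n-k}(X) < 1 - \sqrt{1/\mathsf{a}} + t\}$, the only new input being the high-probability upper bound on the $k$-th smallest singular value for $k = o(n)$, which the paper obtains (as in Proposition~\ref{prop: 1029}) via Cauchy interlacing to a column minor and left-edge rigidity for bounded-support matrices from \cite{HLS}. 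Your explicit abstraction of the Theorem~\ref{thm: 133} proof as a uniform bound $\lVert Xv\rVert_{2}\ge C_{0}\sqrt{n}$ on compressible vectors is a valid repackaging of that argument, and your entrywise-truncation route to the $s_{k}(X)$ bound is a minor variant of the paper's column-removal approach; the one point to tighten is that the ``concentration of the eigenvalue counting function'' step needs a local-law/rigidity input (as in \cite{HLS}) rather than a bare bounded-differences argument to achieve the $O(n^{-K})$ rate.
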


\begin{remark}
This result is weaker than no-gaps delocalization in an apparent sense, since it only tells us that any large set of coordinates (those indexed by $I$ with $|I|=\lceil (1 - \epsilon) n \rceil$) will have a total mass no less than $\delta_{\mathsf{a}}$, but does not provide a lower bound for $\|\mathfrak{u}_I\|_2$ for any $|I|\geq \epsilon n$. However, this result does serve as the opposite case of the conclusion in Corollary \ref{cor: 179}, since it states that any small set of coordinates indexed by $I^c$ cannot hold the majority of the mass greater than $1-\delta_{\mathsf{a}}$. We would like to remark here that it is not yet known whether no-gaps delocalization holds for $\mathfrak{u}$ in the case of rectangular random matrices, even for those with lighter tails such as sub-Gaussian distributions, despite the apparent result in the Gaussian case. 
\end{remark}

\subsection{Related works} \label{s.ref review}

The localization-delocalization transition is also well-known as the Anderson transition in physics, which is used to model the metal-insulator transitions. Establishing such a phase transition is a fundamental task to various models such as random Schr\"{o}dinger operators, random band matrices, sparse random matrices, and heavy-tailed random matrices.

For  dense mean field models like Wigner matrices with sufficiently light tailed entries, it had been widely believed that their eigenvectors should behave similarly to GUE/GOE and thus are universal. Consequently, one would believe that all eigenvectors of such Wigner matrices should satisfy both sup-norm delocalization and no-gaps delocalization, and further they are asymptotically Gaussian. 
Indeed, in the last decade, a major achievement in random matrix theory, is the proof of the universality of these results for all such Wigner matrices and similar models. The first sup-norm delocalization result was established in \cite{ESY09-1} after an initial weaker result in \cite{ESY09-2}, since then various similar results have been obtained for related models to various extent \cite{EYY12, EYY12b, TV11, AEK17, VW15, BL22}.  
The first no-gaps delocalization result was established in \cite{RV16} for a rather general class of square matrices, including Wigner matrices, iid non-Hermitian matrices as typical examples, under the assumption that the operator norms of the matrices are bounded by $C\sqrt{n}$. In case of Wigner matrices, it would require the existence of the 4-th moment of the entries. We also refer to \cite{LT20, LOr20, OrVW16, DLL11, ERS17, NTV17, LL21, AB11} for related study. Finally, we  refer to \cite{BY17, KY13, TV12, MY22, CES22, benigni2020eigenvectors} for more precise distribution of the eigenvector statistics for Wigner matrices and related models.

If one turns to consider random matrix models which are non mean field, or sparse, or heavy-tailed, it is widely believed that a localization-delocalization transition will occur for the eigenvectors in certain regime of the spectrum. Such a phase transition for eigenvectors often comes along with a phase transition of eigenvalues statistics from the random matrix statistics to Poisson statistics. Without trying to be comprehensive, we refer to \cite{And78, AW15, CL90, FS83, AM93, Spe88, DS20, CKM87} for random Schr\"{o}dinger operator, \cite{FM91,MFDQS96, Sch09, BE17, YYY21+, YYY22, BYY20, EK11, EKYY13-2, CPSS24, HM19, CS22+} for random band matrices, and \cite{EKYY13, ADK21,ADK23,ADK24, HKM19} for the sparse random matrices, regarding the progress of the localization-delocalization  transition.  In the rest of this subsection, we will then mainly focus on the literature of the heavy-tailed random matrices which are closely related to our study. 

Depending on the spectral regime of interest, one often regards a Wigner matrix as heavy-tailed if $\alpha\in (0,2)$ (bulk regime) or $\alpha\in (0,4)$  (edge regime). It has now been well understood what happens towards the top eigenvalues and eigenvectors of Wigner matrices when $\alpha$ goes across $4$, as we mentioned earlier. Recently, there has also been some major progress in the understanding of the bulk regime when $\alpha\in (0,2)$.  A Wigner matrix whose entries fall within the domain of attraction of an $\alpha$-stable law is known as a L\'{e}vy matrix. The global spectral distribution obtained in \cite{BG08, BCC11} is no longer semicircle law. Instead, it is a symmetric heavy-tailed law with an unbounded support. Many studies have be done on understanding the eigenvalues around a fixed energy level and the corresponding eigenvectors. Numerical simulations \cite{CB94} suggested that the local spectral statistics of L\'{e}vy matrices exhibit a phase transition from those of GOE at low energies to those of a Poisson process at high energies when $\alpha < 1$. 
This prediction from \cite{CB94} was later revised in \cite{TBT16} using the supersymmetric method, resulting in the following findings:
(i) For $1 \leq \alpha < 2$, at any energy level $E \in \mathbb{R}$, the local spectral statistics near $E$ converge to those of the GOE, and the corresponding eigenvectors are completely delocalized.
(ii) For $0 < \alpha < 1$, there exists a mobility edge $E_{\alpha}$ such that for energy levels $E < E_{\alpha}$, the local spectral statistics near $E$ resemble those of the GOE and the eigenvectors are delocalized, whereas for $E > E_{\alpha}$, the local spectral statistics near $E$ converge to a Poisson point process and the eigenvectors are localized. The GOE local statistics and complete eigenvector delocalization for $\alpha \in (1, 2)$ were rigorously proven in \cite{ALY}.
For $\alpha \in (0, 1)$, the authors of \cite{BG13, BG17} demonstrated that eigenvectors with sufficiently high energy levels are localized, while those with sufficiently low energy levels are partially delocalized. 
More recently, \cite{ABL} addressed a sharp phase transition from a GOE-delocalized phase to a Poisson-localized phase when $\alpha \in (0, 1)$. We emphasize that these results are for certain fixed energy levels, and thus do not concern with the extreme eigenvalues. 

Due to the symmetric nature of the spectrum of the Wigner matrices, one does not need to distinguish the study of the largest eigenvalue and its eigenvector from the smallest one. Nevertheless, when one turns to the singular values of the rectangular matrix, also equivalent to the square roots of the spectrum of the sample covariance matrices, the top eigenvalue/eigenvector and bottom eigenvalue/eigenvector may have rather asymmetric behavior in response to the change of $\alpha$. While it is known that the top eigenvalue/eigenvector behave similarly to those of Wigner matrices \cite{DY1,BKY16}, the bottom one is much less known.  Regarding the bottom eigenvalue, the limiting behavior has been well understood from \cite{Tikhomirov15, BLX23+} when $\alpha>2$. 
When $\alpha\in (0,2)$, the global law of the spectrum was obtained in \cite{BDG09}. Specifically, the spectrum of $n^{-2/\alpha}XX^*$ converges weakly to a limiting measure whose density does not vanish in any neighborhood of $0$.  This result implies $s_{\min}(X)=o(n^{1/\alpha})$ with high probability. This provides a rough upper bound of $s_{\min}(X)$. Regarding the lower bound, we can learn from \cite{Tikhomirov16} that $s_{\min}(X)\gtrsim \sqrt{n}$ with high probability, under our assumptions in Definition \ref{def: 78}. We remark here that Tikhomirov's result in \cite{Tikhomirov16} holds under much more general setting where no moment condition of $x_{ij}$ is imposed and instead only a mild anti-concentration assumption is needed. However, there is no evidence that the lower bound is tight for our more specific model. It is an appealing problem to identify the true order and  the precise limiting behavior of $s_{\min}(X)$.

\subsection{Notation}

 We use $\|u\|_2$ and $\|u\|_\infty$ to denote the $\ell^2$ and $\ell^\infty$-norms of a vector $u$, respectively. We further use   $\|A\|$ for the operator norm of a matrix $A$.  We use $C$ to denote some generic (large) positive constant.   For any positive integer $k$, let $[k]$ be the set $\{1,\ldots,k\}$. We also denote by $u|_{I}$ or $u_I$ the restriction of a vector $u$ on the index set $I$, i.e., the vector obtained by deleting the components indexed by $I^c$ from $u$.  Similarly, let $A_{L}$ denote the minor of a matrix $A$ consisting of the columns indexed by $L$ for any subset $L \subset [n]$. 

For any random vector $\mathbf{x}\in\mathbb{R}^{k}$, concentration function of $\mathbf{x}$ is defined as 
\begin{equation}\label{eq: 345}
	\mathcal{Q}(\mathbf{x},t) = \sup_{\nu\in\mathbb{R}^{k}}\prob\{\lVert \mathbf{x}-\nu \rVert_2\le t\},
	\quad t> 0.
\end{equation}
 For a subset $E\subset\mathbb{R}^{n}$ and $Y\in \mathbb{R}^{N\times n}$, we write $Y(E)=\{Yw\in\mathbb{R}^{N} : w\in E\}$.
 For a vector $y\in\mathbb{R}^{N}$ and a subset $E'\subset\mathbb{R}^{N}$, we denote by $\mathrm{d}(y,E')$ the Euclidean distance between $y$ and $E'$, i.e., $\mathrm{d}(y,E')=\inf\{\lVert y - w' \rVert_{2} : w'\in E'\}$. Similarly, for subsets $E, E' \subset \mathbb{R}^{n}$, we define $\mathrm{d}(E, E') = \inf\{\lVert w - w' \rVert_{2} : w \in E, w' \in E'\}$.

\section{Proof Strategy} \label{ss.proof strategy}

We will now outline the proof strategies for two main results: Theorem \ref{thm: ssv} and Theorem \ref{thm: 133}.

~

For  Theorem \ref{thm: ssv}, we will prove it by contradiction. 
Recall $s_{\min}(X) = \lVert X \mathfrak{u} \rVert_{2}$. We aim to show that, with high probability, most of the mass of $\lVert \mathfrak{u} \rVert_{2}$ is concentrated on coordinates larger than $(c \log{n} / n)^{1/2}$ for some small constant $c > 0$. Recall the definition of $\hat{I}$ from Theorem \ref{thm: ssv}. Consider the event $\lVert \mathfrak{u}_{\hat{I}} \rVert_{2}^{2} < 1 - \delta$, which implies $\lVert \mathfrak{u}_{\hat{I}^{c}} \rVert_{2}^{2} > \delta$. On this event, using the variational formula \eqref{eq: 97} and Proposition \ref{prop: 240} (which we will prove later), we will prove that, with high probability,
\begin{equation}\label{eq: 257}
    s_{\min}(X) > M(c) n^{\frac{1}{\alpha}} (\log{n})^{\frac{\alpha-2}{2\alpha}},
\end{equation}
where $M(c) > 0$ is a constant depending on $c$, with $M(c) \uparrow \infty$ as $c \downarrow 0$.

However, on the other hand, we will show in Theorem \ref{thm: upper bound} the following upper bound for \( s_{\min}(X) \):
\begin{equation}\label{eq: 262}
    s_{\min}(X) < C n^{\frac{1}{\alpha}} (\log{n})^{\frac{\alpha-2}{2\alpha}}, 
\end{equation}
for some constant $C>0$. By choosing \( c > 0 \) sufficiently small in \eqref{eq: 257}, we see that \eqref{eq: 257} contradicts \eqref{eq: 262}.

The proof of Proposition \ref{prop: 240} closely follows the method introduced in \cite{Tikhomirov16}. In \cite{Tikhomirov16}, Tikhomirov develops an approach for the lower bound of the smallest singular value of a rectangular matrix without any moment condition, based on the approach developed in \cite{LPRTj05,RV08,RV09} where sufficiently light-tailed assumptions are imposed and also the estimates of the distance between a random vector and a fixed linear subspace in \cite{RV15} where no moment assumption is imposed as well. Despite estimating $s_{\min}(X)$ requires one to find the minimizer of $\|Xu\|_2$ among all $u\in \mathbb{S}^{n-1}$, the method in \cite{Tikhomirov16} and the mentioned previous reference can be used to estimate $\inf_{u\in \mathcal{S}_0} \|Xu\|_2$ over a much more general subset $\mathcal{S}_0\subseteq \mathbb{S}^{n-1}$. Especially, all the proofs in the above reference involve a certain decomposition of $\mathbb{S}^{n-1}$ into subsets, such as subsets of compressible and incompressible vectors.  In order to establish (\ref{eq: 257}), we shall apply the approach in \cite{Tikhomirov16} to some $\mathcal{S}_0$ which contains $u$'s that are not sufficiently localized, i.e., $\lVert u_{\hat{I}^{c}} \rVert_{2}^{2} > \delta$. Alternatively, they are incompressible.  Moreover, in order to get an improved lower bound in (\ref{eq: 257}), rather than the $\sqrt{N}$ bound in \cite{Tikhomirov16}, we shall actually work with a rescaled matrix $\widetilde{X} = N^{-\frac{1}{\alpha}+\eps_{N}}X$ which essentially resembles a sparse matrix with sparsity around the critical scale $O(\log n/n)$, after getting rid of heavy-tailed part, in the spirit of \cite{Tikhomirov16}.
To the best of our knowledge, this level of sparsity has not been addressed in the context of the smallest singular value of a sparse rectangular random matrix; see \cite{GT23}. In this article, we partially address this issue by considering a variational problem \eqref{eq: 97} restricted to a subset of incompressible vectors, i.e., $\mathcal{S}_{0}$, which will be defined later in \eqref{eq: 207}. 

 In order to get a quantitative upper bound of $s_{\min}(X)$ in Theorem \ref{thm: upper bound}, we first observe from the basic Cauchy interlacing that $s_{\min}(X)$ is bounded by the smallest (non-zero) singular value of any column minor of $X$. Here by ``column minor" we mean a  submatrix 
obtained by keeping certain columns of $X$ only and deleting the others. We will then take the advantage of the small columns,  i.e., the columns with small entries/small $\ell^2$-norms. We will then simply use the operator norm of this minor with small columns to bound  $s_{\min}(X)$ from above. The operator norm of this minor can be estimated by using Seginer's result (Proposition \ref{prop: Seginer}).

~

For the proof of Theorem \ref{thm: 133},  
we draw inspiration from the approach outlined in \cite{RV16} and adapt it to  our problem. The approach developed in \cite{RV16} works well for the eigenvector of a square matrices where the entries are fully independent or independent up to symmetry. It is based on the eigenvalue equation 
$(A-\lambda)v=0$ for an $n\times n$ random matrix $A$ with eigenpair $(\lambda, v)$. Writing $B=A-\lambda$, followed by a decomposition $B_I v_I+B_{I^c} v_{I^c}=0$, one can prove that $\|v_I\|_2$ cannot be too small if $\|B_I\|\leq C\sqrt{n}$ and $s_{\min} (B_{I^c})\geq c\sqrt{n}$ for some positive constants $C$ and $c$. Here $B_L$ is the minor of $B$ consisting of the columns indexed by $L\subset [n]$ for $L=I$ or $I^c$.  Using the argument for all $I$ with cardinality $\varepsilon n$ and estimate the probability for the union of the bad events over all such index set $I$, one can prove the no-gaps delocalization.

When one applies this idea to prove our Theorem \ref{thm: 133}, we face two major differences. The first difference is that we are dealing with the heavy-tailed matrices, and the operator norm of our $X$ and its minors are no longer bounded by $C\sqrt{n}$. The second difference is that it is no longer easy to rely on the eigenvalue equation for our singular vector. We can certainly consider the covariance matrix $X^*X$ and view $\mathfrak{u}$  as its eigenvalue. But the dependence structure of $XX^*$ is now more complicated than those models considered in \cite{RV16}. If we use the singular value equation $X\mathfrak{u}=s_{\min}(X)\mathfrak{v}$ and maintain the independence of the entries in the matrix, i.e., $X$, we will also need to involve the left singular vector $\mathfrak{v}$ into the discussion. Currently, it is unclear how to handle $\mathfrak{v}$. Nevertheless, the basic strategy  of  reducing the delocalization problem to smallest singular value problem in \cite{RV16} can still be applied if one aims at a weaker result, i.e, Theorem \ref{thm: 133}, instead of the original no-gaps delocalization in \cite{RV16}. In the sequel, we provide a rough outline of the proof strategy. 

We need to bound the probability of the event
\begin{equation*}
    \{ \exists I \subset [n] : |I| = (1 - \epsilon) n, \lVert \mathfrak{u}_{I} \rVert_2 < \delta_{\mathsf{a}} \}.
\end{equation*}
 For any subset $I \subset [n]$, we start from
$
    s_{\min}(X) = \lVert X \mathfrak{u} \rVert_{2} \ge \lVert X_{I^{c}} \mathfrak{u}_{I^{c}} \rVert_{2} - \lVert X_{I} \mathfrak{u}_{I} \rVert_{2}.
$
The case $\alpha > 4$ is more straightforward to handle. Since $\lVert X_{I} \rVert = O(\sqrt{N})$ and, by the Bai-Yin law, $ |N^{-1/2}s_{\min}(X) - (1 - \sqrt{1/\mathsf{a}})| = o(1)$ with high probability when $\alpha > 4$, we deduce that if $|I|< \delta_{\mathsf{a}}$,
\begin{equation*}
    s_{\min}(X_{I^{c}}) \le \frac{1}{\sqrt{1 - \delta_{\mathsf{a}}^{2}}} (1 - \sqrt{1/\mathsf{a}} + t + C\delta_{\mathsf{a}}) \sqrt{N}, \quad t > 0, \quad C>0.
\end{equation*}
Using the union bound, it reduces to estimating
\begin{equation*}
    {n \choose (1 - \epsilon) n} \cdot \max_{|I| = (1 - \epsilon) n} \prob \left\{ s_{\min}(X_{I^{c}}) \le \frac{(1 - \sqrt{1/\mathsf{a}} + t + C\delta_{\mathsf{a}}) \sqrt{N}}{\sqrt{1 - \delta_{\mathsf{a}}^{2}}} \right\}.
\end{equation*}
Again, by the Bai-Yin law, we observe that $N^{-1/2} |s_{\min}(X_{I^{c}}) - (1 - \sqrt{\epsilon/\mathsf{a}})| = o(1)$ with high probability. Therefore, if $\delta_{\mathsf{a}}$ and $t$ are small enough, we can obtain the desired result by applying a suitable deviation inequality for the smallest singular value, such as \cite[Corollary V.2.1]{FS10}. Note that unlike \cite{RV16} where  the value of the eigenvalue $\lambda$ is irrelevant and the invertibility probability of the almost square matrix $B_{I^c}$ is sufficiently sharp, here we will rely on the precise estimate of the singular values $s_{\min}(X)$ and $s_{\min}(X_{I^c})$ while the large deviation estimate of them is not sharp enough to conclude the original version of the no-gaps delocalization.   

For the case $2 < \alpha \le 4$, a more careful treatment is required since the operator norm of $X_I$ is no longer controlled by $O(\sqrt{N})$. This will be thoroughly explained in Section \ref{sec: 958}, where we address the additional complexities and provide the necessary adjustments.

 Finally, for $\alpha>2$, and any single given index set $I$ with $|I|\geq \epsilon n$, 
one might show that $\mathfrak{u}_{I}$ has a nontrivial contribution to the mass ($\ell^2$-norm) via the idea of the quantum unique ergodicity (QUE) \cite{BY17}. However, the probability bound obtained  from the QUE results is not strong enough to cover all $|I|\geq \epsilon n$ simultaneously, which is required for the no-gaps delocalization.

\section{Localization for $\alpha\in (0,2)$}\label{sec: localization}

Let $X=(x_{ij})$ be as in Definition \ref{def: 78} with $\alpha\in(0,2)$.
Throughout this section, we write $N = \mathsf{a} \cdot n$ to keep it simple, which does not affect the proof. We will use $N$ and $n$ interchangeably in the scaling factors or order estimates, as their roles are interchangeable up to a constant $\mathsf{a}$.
We define $\widetilde{X}=(\tilde{x}_{ij})$ by setting
\begin{equation}\label{eq: 152}
	\widetilde{X} = N^{-\frac{1}{\alpha}+\eps_{N}}X, \quad \eps_{N}\in(0,1/\alpha),
\end{equation}
where we will specify $\eps_{N}$ later.

\begin{definition}\label{def: 174}
Let ${c}>0$ be a constant to be chosen later. 
For any  $v\in\mathbb{S}^{n-1}$, define the set $I\equiv I({c},v,\eps_{N})$ by
\begin{equation}\label{eq: 225}
	I = \{ i \in [n]: |v_{i}|> {c}N^{(-1+\alpha\eps_{N})/2} \}.
\end{equation}
Fix a small constant ${\delta}\in(0,1)$.
We define the subset $\mathcal{S}_{0}\equiv\mathcal{S}_{0}({c},{\delta},\eps_{N})\subset\mathbb{S}^{n-1}$ by
\begin{equation}\label{eq: 207}
	\mathcal{S}_{0} = \{ v\in\mathbb{S}^{n-1} : \lVert v_{I^{c}}\rVert^{2}_{2} > {\delta} \}.
\end{equation}
\end{definition}

We shall focus on finding a lower bound of the following quantity:
\begin{equation}
	\inf_{v\in\mathcal{S}_{0}} \lVert \widetilde{X}v \rVert_{2}. \label{081201}
\end{equation}

\begin{proposition}\label{prop: 240}
Suppose $\alpha\in(0,2)$.
Let $\widetilde{X}=(\tilde{x}_{ij})$ be as in \eqref{eq: 152}.
Let ${c'}>0$ be any small constant and choose $\epsilon_N\equiv \epsilon_N({c'})$ so that $N^{\alpha\eps_{N}}\ge {c'}\log{N}$.
Then, for any ${\delta}\in(0,1)$ and $K>0$, there exist constants ${c},c_{\ref{prop: 240}}>0$ such that
we have for large $N$
\begin{equation*}
	\prob\Big\{ \inf_{v\in\mathcal{S}_{0}} \lVert \widetilde{X}v \rVert_{2} \le c_{\ref{prop: 240}} N^{\alpha\eps_{N}/2} \Big\} = O(N^{-K}),
\end{equation*}
where $\mathcal{S}_{0}\equiv\mathcal{S}_{0}({c},{\delta},\eps_{N})$ is defined as in \eqref{eq: 207}.
In addition, the constant $c_{\ref{prop: 240}}$ may depend on $\mathsf{a}, {\delta}, \alpha$ and $K$, but not on $c'$.
\end{proposition}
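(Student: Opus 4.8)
The plan is to adapt Tikhomirov's net-based argument for the smallest singular value of rectangular matrices (as in \cite{Tikhomirov16}) to the restricted variational problem over $\mathcal{S}_0$, while exploiting the fact that after rescaling, $\widetilde X$ behaves like a sparse matrix at sparsity scale $\log N/N$. The first step is a truncation: write $\widetilde{X} = \widetilde{X}^{(1)} + \widetilde{X}^{(2)}$, where $\widetilde{X}^{(1)}$ keeps the entries with $|\tilde x_{ij}| \le $ (some threshold of order $1$, coming from $N^{-1/\alpha+\eps_N}$ times a threshold of order $N^{1/\alpha - \eps_N}$ for $x_{ij}$) and $\widetilde{X}^{(2)}$ is the complementary heavy part. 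Since $\alpha<2$ and by the tail bound \eqref{eq: 81}, each row of $\widetilde{X}^{(1)}$ has a bounded number of nonzero entries with overwhelming probability, so $\widetilde{X}^{(1)}$ is genuinely sparse with $O(\log N)$ expected nonzeros per column (choosing the truncation level so $\prob\{|x_{ij}| > N^{1/\alpha - \eps_N}\} \asymp N^{-1+\alpha\eps_N}$, hence $\asymp \log N/N$ by the hypothesis $N^{\alpha\eps_N} \ge c'\log N$). One should argue $\|\widetilde X v\|_2 \ge \|\widetilde X^{(1)} v\|_2$ cannot be cleanly separated, but the standard trick is to condition on the heavy entries and treat $\widetilde X^{(2)} v$ as a fixed shift absorbed into the anti-concentration estimates, so that lower bounds for $\|\widetilde X v\|_2$ follow from lower bounds for $\|\widetilde X^{(1)} v\|_2$ uniformly over the realizations of the heavy part.

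Second, I would set up the net argument on $\mathcal{S}_0$. The key structural point is that every $v \in \mathcal{S}_0$ is incompressible in a quantitative sense: by definition $\|v_{I^c}\|_2^2 > \delta$ where $I^c$ consists of coordinates with $|v_i| \le cN^{(-1+\alpha\eps_N)/2}$, so the mass $\delta$ is spread over at least $\delta c^{-2} N^{1-\alpha\eps_N} = \delta c^{-2} N/N^{\alpha\eps_N} \gtrsim \delta c^{-2} N/\log N$ coordinates, each of small size. This is exactly the compressible/incompressible dichotomy of \cite{RV08}: $\mathcal{S}_0$ lies (after passing to the localized part) in a set of incompressible vectors with parameters tied to $c, \delta$. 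For such vectors the distance estimate of \cite{RV15}/\cite{Tikhomirov16} applies: for a fixed incompressible $v$, $\|\widetilde X^{(1)} v\|_2$ is bounded below by a constant times $N^{\alpha\eps_N/2}$ (the typical size, since each entry of $\widetilde X^{(1)} v$ has variance of order (number of active coords)$\times$(entry size)$^2 \times$(mass per coord) $\asymp N^{\alpha\eps_N}\cdot N^{-1+\alpha\eps_N}\cdot$... — the bookkeeping here is what pins down the exponent $\alpha\eps_N/2$) with probability $1 - e^{-cN}$ via a tensorization/small-ball estimate over the $N$ rows. Then a union bound over an $\varepsilon$-net of $\mathcal{S}_0$ of cardinality $e^{O(N \log(1/\varepsilon))}$, combined with a crude operator-norm bound $\|\widetilde X^{(1)}\| \le N^{C}$ on a high-probability event to control the discretization error, closes the argument: choose $\varepsilon$ polynomially small so the net error is negligible against $N^{\alpha\eps_N/2}$, and the exponential small-ball bound beats the net cardinality.

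The main obstacle — and the place that requires genuine care rather than routine adaptation — is the small-ball estimate at the critical sparsity scale $\log N/N$. In \cite{Tikhomirov16} and the light-tailed predecessors, each coordinate of $\widetilde X^{(1)} v$ is a sum over all $n$ columns with non-degenerate entries, so one gets $\mathcal{Q}(\langle \text{row}, v\rangle, t) \le 1 - c$ easily. Here, after truncation, each row of $\widetilde X^{(1)}$ has only $O(\log N)$ nonzeros, so the relevant inner product is a sparse sum; one must show that with the randomness of which entries are nonzero (Bernoulli at rate $\asymp\log N/N$) plus the randomness of their signs/values, the row–$v$ inner product still has concentration function bounded away from $1$ at scale $N^{\alpha\eps_N/2}$ — using incompressibility of $v$ to guarantee that enough coordinates of comparable size get "hit" by the sparse pattern. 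This is where the hypothesis $N^{\alpha\eps_N}\ge c'\log N$ is essential: it keeps the expected number of active coordinates per row at least of order $\log N$, which (via a Chernoff bound) ensures that a positive fraction of rows see at least one, indeed $\Omega(\log N)$, coordinates of $v$ of size $\asymp c N^{(-1+\alpha\eps_N)/2}$, and then a Lévy–Kolmogorov–Rogozin-type anti-concentration bound on that sparse sub-sum yields the per-row small-ball estimate. One then needs to tensorize over the $\asymp N$ rows to get the $e^{-cN}$ failure probability; because rows are independent this is a standard Laplace-transform/Markov argument once the per-row bound is in hand. Assembling the threshold choices (truncation level, $\varepsilon_N$, net fineness, the constants $c$ and $c_{\ref{prop: 240}}$) so that everything is consistent and $c_{\ref{prop: 240}}$ depends only on $\mathsf a, \delta, \alpha, K$ and not on $c'$ is the final bookkeeping task; the independence of $c_{\ref{prop: 240}}$ from $c'$ comes from the fact that $c'$ only enters through $\eps_N$, which is always absorbed into the factor $N^{\alpha\eps_N/2}$ on the right-hand side and never degrades the per-row constant.
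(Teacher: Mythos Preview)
Your overall plan—truncate, establish a per-vector small-ball estimate exploiting the incompressibility built into $\mathcal{S}_0$, then run a net argument—matches the paper's strategy in spirit. But the net step as you describe it does not close, and this is a genuine gap rather than a bookkeeping issue.

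You propose to take an $\varepsilon$-net of $\mathcal{S}_0$ with $\varepsilon$ polynomially small in $N$, yielding a net of cardinality $e^{O(n\log N)}$, and to beat this with a per-vector failure probability of $e^{-cN}$. That union bound fails: $e^{O(n\log N)-cN}\to\infty$. Even with the sharp bound $\|\widetilde{X}_D\|\lesssim N^{\alpha\eps_N/2}$ (which is what actually holds, not the crude $N^C$ you suggest), taking $\varepsilon$ a small constant gives a net of size $e^{n\log(C/\varepsilon)}$, and there is no freedom to make $\log(C/\varepsilon)$ smaller than the anti-concentration constant $c$ times~$\mathsf a$. The paper resolves this via Tikhomirov's device (Proposition~\ref{prop: 153}): rather than netting all of $\mathcal{S}_0$, it builds a net $\mathcal{N}_0$ of \emph{sparse} vectors supported on $\lceil c''n\rceil$ coordinates (Lemma~\ref{lem: 331}), each with $\|v'\|_2\gtrsim\delta\sqrt{c''}$ and $\|v'\|_\infty\le cN^{(-1+\alpha\eps_N)/2}$. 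This net has cardinality $\exp(O(c''n\log(1/\delta'c'')))$, and since $c''$ is a \emph{free} small constant, it can be driven below the $e^{-c'_{\ref{prop: 335}}N}$ bound from Proposition~\ref{prop: 335}. The approximation in condition~(i) of Proposition~\ref{prop: 153} is only required on $\mathrm{supp}(v')$, and the remaining coordinates are absorbed into $\widetilde{X}(E_{v'}^\perp)$ on the other side of the distance in condition~(ii). The final lower bound is $L-\delta'\|\widetilde{X}_D\|$ with both terms of order $N^{\alpha\eps_N/2}$ and $\delta'$ a small constant, so the subtraction leaves a positive multiple of $N^{\alpha\eps_N/2}$.

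A secondary point: your ``condition on the heavy entries and treat $\widetilde{X}^{(2)}v$ as a fixed shift'' glosses over the fact that the light and heavy parts of a single entry are not independent. The paper handles this by conditioning on the pattern $\mathcal{P}=\{(i,j):\tilde{x}_{ij}\in D\}$ (Lemma~\ref{lem: 565}), after which $\widetilde{X}_D v'$ and $\widetilde{X}(E_{v'}^\perp)+\widetilde{X}_{D^c}(E_{v'})$ become conditionally independent, and the per-row anti-concentration (at scale $c\sqrt{p}$ with $p=N^{-1+\alpha\eps_N}$, not at scale $N^{\alpha\eps_N/2}$) is established via Rogozin's inequality on the symmetrized variable $(\tilde{x}_{ij})_D-(\tilde{x}'_{ij})_D$. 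The scale $N^{\alpha\eps_N/2}$ only appears after tensorizing over the $\asymp N$ rows and projecting to a subspace of dimension $\asymp N$ via Lemma~\ref{lem: 1081}.
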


A standard approach for obtaining the above lower bound is an $\epsilon$-net argument \cite{RV08}. There exists a sufficiently dense discrete set of $v$'s, denoted by $\mathcal{N}$, such that all vectors in $\mathcal{S}_{0}$ can be approximated by some vector in $\mathcal{N}$, up to a small $\ell^2$ distance $\epsilon$. Consequently, one has $\inf_{v \in \mathcal{S}_{0}} \lVert \widetilde{X}v \rVert_{2} \geq \inf_{v \in \mathcal{N}} \lVert \widetilde{X}v \rVert_{2} - \epsilon \lVert \widetilde{X} \rVert$. Such a strategy would work well if $\lVert \widetilde{X} \rVert$ were well bounded, which is unfortunately not true in the heavy-tailed case. To circumvent this issue, Tikhomirov proposed the following proposition where the operator norm of the heavy-tailed matrix is not involved in the lower bound.

\begin{proposition}[Proposition 3 in \cite{Tikhomirov16}]\label{prop: 153}
Let $N,n\in\N$ and $\mathcal{S}\subset\mathbb{S}^{n-1}$.
Set $Y= Y_{1}+Y_{2}$ where $Y_{1}$ and $Y_{2}$ are $N\times n$ matrices.
Choose ${\delta'}>0$ and $L>0$.
Suppose that there exists a subset $\mathcal{N}\subset\mathbb{R}^{n}$ such that the following conditions hold.
\begin{enumerate}
    \item[(i)] For any $v\in \mathcal{S}$, there exists $v'\in\mathcal{N}$ such that
    \begin{equation*}
        \lVert v|_{\textnormal{supp}(v')} - v' \rVert_{2} \le {\delta'}.
    \end{equation*}
    \item[(ii)] 
 We have for any $v'\in\mathcal{N}$, 
    \begin{equation*}
        \mathrm{d}\big(Y_{1}v',Y(E_{v'}^{\perp})+Y_{2}(E_{v'})\big) \ge L,
    \end{equation*}
    where $E_{v'}=\textnormal{span}\{e_{j}\} _{j\in\textnormal{supp}(v')}$, the linear span of a subset $\{e_{j}\} _{j\in\textnormal{supp}(v')}$ of the standard unit basis in $\mathbb{R}^{n}$.
\end{enumerate}
Then,
\begin{equation}\label{eq: 482}
    \inf_{v\in \mathcal{S}}\lVert Yv \rVert_2 \ge L - {\delta'} \lVert Y_{1} \rVert.
\end{equation}
\end{proposition}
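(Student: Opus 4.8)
\textbf{Proof plan for Proposition \ref{prop: 153}.}

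The plan is to run the $\epsilon$-net argument but to replace the naive bound $\|Yv\|_2 \ge \|Yv'\|_2 - \|Y\|\|v-v'\|_2$ (which is useless because $\|Y\|$ is not controlled) by a geometric argument that only sees the \emph{well-behaved} part $Y_1$. Fix $v \in \mathcal{S}$ and pick $v' \in \mathcal{N}$ as in hypothesis (i), so that $v$ restricted to $S := \mathrm{supp}(v')$ is within ${\delta'}$ of $v'$. The key observation is that the difference $v - v'$ decomposes as $(v|_S - v') + v|_{S^c}$, where the first piece lies in $E_{v'} = \mathrm{span}\{e_j\}_{j \in S}$ and has norm $\le {\delta'}$, while the second piece $v|_{S^c}$ lies in $E_{v'}^{\perp}$ (here I extend the restriction notation to mean the vector in $\mathbb{R}^n$ that agrees with $v$ off $S$ and is zero on $S$). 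Therefore I can write
\begin{equation*}
    Yv = Y v' + Y(v|_S - v') + Y(v|_{S^c}) = Y_1 v' + Y_2 v' + Y(v|_S - v') + Y(v|_{S^c}).
\end{equation*}

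Now group the terms: $Y_1 v'$ is the anchor, and I want to show everything else is ``far'' from $-Y_1 v'$ in a controlled way. Since $v|_{S^c} \in E_{v'}^{\perp}$, the vector $Y(v|_{S^c})$ lies in $Y(E_{v'}^{\perp})$. Since $v' \in E_{v'}$, the vector $Y_2 v'$ lies in $Y_2(E_{v'})$. Hence $Y_2 v' + Y(v|_{S^c}) \in Y(E_{v'}^{\perp}) + Y_2(E_{v'})$, and by hypothesis (ii),
\begin{equation*}
    \mathrm{d}\bigl(Y_1 v',\, Y(E_{v'}^{\perp}) + Y_2(E_{v'})\bigr) \ge L
    \quad\Longrightarrow\quad
    \bigl\| Y_1 v' + Y_2 v' + Y(v|_{S^c}) \bigr\|_2 \ge L.
\end{equation*}
The only remaining term is $Y(v|_S - v')$, but $v|_S - v' \in E_{v'}$ (it is supported on $S$), so $Y(v|_S-v') = (Y_1+Y_2)(v|_S-v')$; however the cleaner route is to keep just $Y_1(v|_S - v')$ grouped with the anchor and note $Y_2(v|_S - v') \in Y_2(E_{v'})$ as well. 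Re-grouping accordingly: $Y_1 v' + Y_1(v|_S - v') = Y_1(v|_S)$, and $Y_2 v' + Y_2(v|_S - v') + Y(v|_{S^c}) = Y_2(v|_S) + Y(v|_{S^c})$, which still lies in $Y_2(E_{v'}) + Y(E_{v'}^{\perp})$; but now the anchor has shifted from $Y_1 v'$ to $Y_1(v|_S)$, which differs from it by $\|Y_1(v|_S - v')\|_2 \le {\delta'}\|Y_1\|$. So by the triangle inequality applied to the distance bound from (ii),
\begin{equation*}
    \|Yv\|_2 = \bigl\| Y_1(v|_S) + \bigl(Y_2(v|_S) + Y(v|_{S^c})\bigr) \bigr\|_2 \ge \mathrm{d}\bigl(Y_1(v|_S), Y_2(E_{v'}) + Y(E_{v'}^{\perp})\bigr) \ge L - {\delta'}\|Y_1\|.
\end{equation*}
Taking the infimum over $v \in \mathcal{S}$ yields \eqref{eq: 482}.

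I do not anticipate a serious obstacle here; the content is purely the linear-algebraic bookkeeping of which subspace each term lives in, and the single quantitative input is the crude bound $\|Y_1(v|_S - v')\|_2 \le {\delta'}\|Y_1\|$, which is exactly why $\|Y_1\|$ (and not $\|Y\|$) appears in the conclusion. The one point demanding care is the bookkeeping of the restriction notation --- distinguishing $v|_S \in \mathbb{R}^{|S|}$ from its zero-padded embedding in $\mathbb{R}^n$, and checking that $\mathrm{supp}(v|_S - v') \subseteq S$ so that this difference genuinely lies in $E_{v'}$ rather than merely being close to it. Once the embedding conventions are fixed, the orthogonal-complement decomposition $v = v|_S^{(n)} + v|_{S^c}^{(n)}$ with the two pieces landing in $E_{v'}$ and $E_{v'}^{\perp}$ respectively is immediate, and the proof closes in a few lines.
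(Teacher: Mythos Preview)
Your argument is correct and is essentially the standard proof of this proposition (which the paper does not prove itself but cites as \cite[Proposition~3]{Tikhomirov16}): decompose $v=v|_{S}+v|_{S^{c}}$ along $E_{v'}\oplus E_{v'}^{\perp}$, observe that $Y_{2}(v|_{S})+Y(v|_{S^{c}})$ lands in the linear subspace $Y_{2}(E_{v'})+Y(E_{v'}^{\perp})$, and then use the $1$-Lipschitz property of $w\mapsto\mathrm{d}(w,F)$ together with $\lVert Y_{1}(v|_{S})-Y_{1}v'\rVert_{2}\le\delta'\lVert Y_{1}\rVert$ to pass from the anchor $Y_{1}v'$ to $Y_{1}(v|_{S})$. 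The only cosmetic point is that your first grouping (keeping $Y_{1}v'$ as anchor and treating $Y(v|_{S}-v')$ as error) already works without the re-grouping detour, since $Y_{2}(v|_{S}-v')\in Y_{2}(E_{v'})$ can be absorbed into the subspace and only $Y_{1}(v|_{S}-v')$ costs $\delta'\lVert Y_{1}\rVert$; but the conclusion is identical.
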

\begin{remark}
The proposition above can be regarded as a special case of \cite[Proposition 3]{Tikhomirov16} in the sense that one might choose more general  collection of linear subspaces $E_{v'}$'s.
\end{remark}

~

When one applies Tikhomirov's proposition to a heavy-tailed matrix $Y$, $Y_1$ is chosen to be a light-tailed part with sufficient anti-concentration properties, while $Y_2$ is the remaining part containing the heavy tail. The advantage of the above proposition lies in the following three aspects:
First, only the operator norm of the light-tailed matrix, $\|Y_1\|$, is involved in the lower bound in (\ref{eq: 482}).
Second, the definition of $\mathcal{S}_0$ allows one to find a net $\mathcal{N}$ in which all vectors $v'$ have sufficiently long length ($\ell^2$-norm) but well-controlled $\ell^{\infty}$-norm, which, together with the anti-concentration nature of $Y_1$, guarantees a sufficiently rich anti-concentration structure of $Y_1v'$.
Third, the conditional independence between $Y_{1}v'$ and $Y(E_{v'}^{\perp}) + Y_{2}(E_{v'})$ allows one to use the fact that a random projection of a random vector with a sufficiently rich structure is anti-concentrated \cite{Tikhomirov16}.

The above strategy was used in \cite{Tikhomirov16} to obtain a lower bound $\sqrt{N}$ of the original random matrix $X$ and its additive deformations, without any moment condition. Here, instead, we apply it to the rescaled matrix $\widetilde{X}$ to show a lower bound of (\ref{081201}). According to the strategy in the above proposition, a decomposition of $\widetilde{X}$ will be needed. The light tail part of our $\widetilde{X}$ will be a restriction of $\widetilde{X}$ on a window of order $1$, which results in a sparse matrix with sparsity around the critical scale. Hence, a major modification we shall make, based on Tikhomirov's strategy, is to extend the anti-concentration results from the dense setting to the sparse setting.
More specifically, 
we decompose $\widetilde{X}$ into two parts so that one part has a nice bound for its operator norm. Let us set
\begin{equation}\label{eq: 232}
	D=D_{1}\cup D_{2}, \quad D_{1}=[-1, 1], \quad D_{2}=[-M,-2)\cup(2,M],
\end{equation}
where we choose $M>0$ large enough so that
\begin{equation}\label{eq: 196}
	\frac{C_{\ell}}{2^{\alpha+1}} > \frac{C_{u}}{M^{\alpha}}.
\end{equation}
We consider the following decomposition of $\widetilde{X}$:
\begin{equation}\label{eq: 235}
	\widetilde{X} = \widetilde{X}_{D} + \widetilde{X}_{D^{c}} = ((\tilde{x}_{ij})_{D}) + ((\tilde{x}_{ij})_{D^{c}}),
\end{equation}
where $(\tilde{x}_{ij})_{D}= \tilde{x}_{ij}\indic\{\tilde{x}_{ij}\in D\}$ and $(\tilde{x}_{ij})_{D^{c}}= \tilde{x}_{ij}\indic\{\tilde{x}_{ij}\in D^{c}\}$.
Since $x_{ij}$ is symmetrically distributed and we assume \eqref{eq: 196}, it follows that
\begin{equation}\label{eq: 248}
	\E[(\tilde{x}_{ij})_{D}]=0, \quad
	\prob\{\tilde{x}_{ij}\in D_{2}\}\ge C_{\ell}2^{-(\alpha+1)}N^{-1+\alpha\eps_{N}}, \quad
	\prob\{\tilde{x}_{ij}\in D_{1}\}\ge 1-C_{u}N^{-1+\alpha\eps_{N}}.
\end{equation}
Note that we use Assumption \ref{assump: 89} (symmetric distribution) to obtain the zero mean condition of $(\tilde{x}_{ij})_{D}$ by simply setting $D$ to be symmetric. For more general cases, such as when $\E x_{ij}=0$ without the symmetry assumption, we may need to choose $D$ accordingly.
One observes that $\widetilde{X}_{D}$ resembles a sparse random matrix with a sparsity level $N^{-1+\alpha\epsilon_{N}}$ approximately. Roughly speaking, $(\tilde{x}_{ij})_{D} \sim 1$ with a probability of $N^{-1+\alpha\epsilon_{N}}$, and is  $0$ otherwise.

We bound the operator norm of $\widetilde{X}_{D}$ in the following lemma.

\begin{lemma}\label{lem: 212}
As in Proposition \ref{prop: 240}, let ${c'}>0$ be any small constant and choose $\epsilon_N= \epsilon_N({c'})$ satisfying $N^{\alpha\eps_{N}}\ge c'\log{N}$.
 For every constant $K>0$, there exists a constant $C_{\ref{lem: 212}}=C_{\ref{lem: 212}}(K)>0$ such that
\begin{equation*}
	\prob\{ \lVert \widetilde{X}_{D} \rVert > C_{\ref{lem: 212}}N^{\frac{\alpha\eps_{N}}{2}} \} = O(N^{-K}).
\end{equation*}
where $C_{\ref{lem: 212}}$ may also depend on $C_{u}$ and the aspect ratio $\mathsf{a}$.
\end{lemma}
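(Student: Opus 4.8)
\textbf{Proof plan for Lemma \ref{lem: 212}.}

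The plan is to bound $\lVert \widetilde{X}_D \rVert$ via the entrywise bound $\lVert \widetilde{X}_D \rVert \le \lVert X_D' \rVert$ where $X_D'$ is the matrix with entries $x_{ij}\indic\{x_{ij}\in N^{1/\alpha-\eps_N}D\}$, so that $\widetilde{X}_D = N^{-1/\alpha+\eps_N} X_D'$, and the entries of $X_D'$ are bounded in absolute value by $M N^{1/\alpha-\eps_N}$. Since the $\tilde x_{ij}$ are i.i.d., mean zero (by \eqref{eq: 248}), with variance of order $N^{-1+\alpha\eps_N}$ up to constants, the natural tool is a concentration/moment bound for the operator norm of a random matrix with independent, bounded, mean-zero entries. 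The cleanest route is to invoke a non-asymptotic matrix bound — either a matrix Bernstein inequality applied to $\widetilde{X}_D$ (writing $\widetilde{X}_D\widetilde{X}_D^* = \sum_{i,j} \tilde x_{ij}^{(D)2} e_i e_i^* + \text{off-diagonal}$ is awkward; better to use the rectangular matrix Bernstein directly on the sum $\sum_{i,j}(\tilde x_{ij})_D\, e_i f_j^*$) or, more simply, a Latała/Bandeira–van Handel-type bound. The key parameters: the rows of $\widetilde{X}_D$ have expected squared norm $\asymp n\cdot N^{-1+\alpha\eps_N} = N^{\alpha\eps_N}\cdot (n/N) \asymp N^{\alpha\eps_N}$, and similarly for columns; and each entry is bounded by $M N^{-1/\alpha+\eps_N}$, which is $\le N^{\alpha\eps_N/2}$ precisely when $N^{\alpha\eps_N}\ge c'\log N$ up to the constant $M$ (this is where the hypothesis $N^{\alpha\eps_N}\ge c'\log N$ enters). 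So both the ``variance'' scale $\sqrt{N^{\alpha\eps_N}}$ and the boundedness scale are $O(N^{\alpha\eps_N/2})$.

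Concretely, I would proceed as follows. First, reduce to a bound on $\lVert \widetilde{X}_D \rVert$ with the entries replaced by their centered, truncated versions — but this is already the case since $(\tilde x_{ij})_D$ is centered and bounded by $M N^{-1/\alpha+\eps_N}$. Second, apply matrix Bernstein (e.g. Tropp's inequality) to the $N\times n$ matrix $\widetilde X_D = \sum_{i,j}(\tilde x_{ij})_D e_i f_j^\top$: the relevant matrix variance is
\begin{equation*}
\sigma^2 = \max\Big\{ \big\lVert \textstyle\sum_{i,j}\E[(\tilde x_{ij})_D^2]\, e_i e_i^\top \big\rVert,\ \big\lVert \textstyle\sum_{i,j}\E[(\tilde x_{ij})_D^2]\, f_j f_j^\top \big\rVert \Big\} \asymp \max\{ n, N\}\cdot N^{-1+\alpha\eps_N} \asymp N^{\alpha\eps_N},
\end{equation*}
and the uniform bound is $R = M N^{-1/\alpha+\eps_N}$. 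Tropp's bound then gives, for $t>0$,
\begin{equation*}
\prob\{ \lVert \widetilde X_D \rVert \ge t \} \le (N+n)\exp\!\Big( \frac{-t^2/2}{\sigma^2 + Rt/3} \Big).
\end{equation*}
Third, choose $t = C N^{\alpha\eps_N/2}$. Using $\sigma^2 \le C_0 N^{\alpha\eps_N}$ and $R t = M C N^{-1/\alpha+\eps_N}\cdot N^{\alpha\eps_N/2} = M C N^{-1/\alpha+\eps_N+\alpha\eps_N/2}$, and noting that $N^{-1/\alpha+\eps_N} \le N^{\alpha\eps_N/2}/\sqrt{c'\log N} \le N^{\alpha\eps_N/2}$, we get $Rt \le M C N^{\alpha\eps_N} $; so the exponent is $\le - c_1 C^2 N^{\alpha\eps_N}/(C_0 + MC) \le -c_1 C^2 c'\log N/(C_0+MC)$, using $N^{\alpha\eps_N}\ge c'\log N$. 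Choosing $C = C_{\ref{lem: 212}}(K)$ large enough makes this probability $O(N^{-K})$ (absorbing the $(N+n)$ prefactor, which is polynomial). This yields the claim with $C_{\ref{lem: 212}}$ depending on $K$, $\alpha$, $\mathsf a$, $C_u$ (through $C_0$, which bounds $\E[(\tilde x_{ij})_D^2]\cdot N^{1-\alpha\eps_N}$ via \eqref{eq: 81}) and $M$.

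I expect the main (minor) obstacle to be bookkeeping the dependence of the ``variance'' parameter on $C_u$ and $\mathsf a$ cleanly, and making sure the truncation level $M$ — chosen in \eqref{eq: 196} to control anti-concentration, not operator norm — is still compatible with the entrywise-domination step $\lVert \widetilde X_D\rVert \le \lVert X'_D\rVert$. An alternative that avoids citing matrix Bernstein is a direct $\eps$-net plus Bernstein scalar bound on $\langle \widetilde X_D v, w\rangle$ over $\tfrac14$-nets of $\mathbb S^{n-1}$ and $\mathbb S^{N-1}$, paying a $e^{C(N+n)}$ union-bound factor; but then one needs $N^{\alpha\eps_N}\gtrsim N$ rather than just $\gtrsim \log N$, which is too strong, so the net argument is \emph{not} adequate here and the matrix-concentration route (which is dimension-free up to log factors) is essential — this is the real reason the sparse regime $N^{\alpha\eps_N}\asymp \log N$ can be handled at all. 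Hence I would commit to the matrix Bernstein approach.
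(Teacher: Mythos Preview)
Your matrix Bernstein computation contains an arithmetic slip that conceals a genuine obstruction. With $t=CN^{\alpha\eps_N/2}$ you correctly have $t^2/2=(C^2/2)N^{\alpha\eps_N}$ and $\sigma^2+Rt/3\le(C_0+MC/3)N^{\alpha\eps_N}$; but then the Bernstein exponent is
\[
-\frac{t^2/2}{\sigma^2+Rt/3}\;=\;-\frac{(C^2/2)\,N^{\alpha\eps_N}}{\Theta(N^{\alpha\eps_N})}\;=\;-\Theta(1),
\]
a \emph{constant}: the factors of $N^{\alpha\eps_N}$ cancel rather than survive as you wrote. The matrix Bernstein bound therefore reads $(N+n)\exp(-\Theta(1))$, which is of order $N$ and vacuous; no choice of the constant $C$ rescues it. (Incidentally, the correct entrywise bound is $R=M$, since $(\tilde x_{ij})_D\in D\subset[-M,M]$, not $MN^{-1/\alpha+\eps_N}$; but this does not alter the conclusion.)

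This is not repairable within matrix Bernstein: the $(N+n)$ prefactor forces you to absorb an additional $\log N$ inside the exponent, so the best deviation scale the inequality can deliver is $\sqrt{\sigma^2\log N}+R\log N\asymp \sqrt{N^{\alpha\eps_N}\log N}+M\log N$. In the critical regime $N^{\alpha\eps_N}=c'\log N$ this is $\asymp\log N$, a factor $\sqrt{\log N}$ too large compared with the target $N^{\alpha\eps_N/2}\asymp\sqrt{\log N}$. The paper sidesteps the dimensional loss by computing high moments directly: it bounds $\E\|(\widetilde X_D)_{i\cdot}\|_2^q$ via Rosenthal's inequality, invokes Seginer's theorem (Proposition~\ref{prop: Seginer}) to pass to $\E\|\widetilde X_D\|^q$ without a logarithmic penalty in the dimension, and finishes by Markov with $q=c'\log N$. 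Your own alternative of a Lata{\l}a or Bandeira--van Handel bound would give the correct expectation scale $N^{\alpha\eps_N/2}$; to make that route work you would still need to pair it with a separate tail concentration step (e.g.\ Talagrand's convex-Lipschitz inequality, using $|(\tilde x_{ij})_D|\le M$) to obtain the $O(N^{-K})$ probability.
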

\begin{proof}
Let $(\widetilde{X}_{D})_{i \cdot}$ be the $i$-th row of $\widetilde{X}_{D}$.
For $q>0$, we have
\begin{equation*}
	\E\lVert (\widetilde{X}_{D})_{i\cdot}\rVert_{2}^{q} = \E\Big(\sum_{j=1}^{n}(\tilde{x}_{ij})_{D}^{2}\Big)^{\frac{q}{2}}
	\le 2^{\frac{q}{2}}\E\Big|\sum_{j=1}^{n}\big((\tilde{x}_{ij})_{D}^{2}-\E (\tilde{x}_{ij})_{D}^{2}\big)\Big|^{\frac{q}{2}} + 2^{\frac{q}{2}}\Big(\sum_{j=1}^{n}\E (\tilde{x}_{ij})_{D}^{2}\Big)^{\frac{q}{2}}.
\end{equation*}
Using Rosenthal's inequality, 
\begin{multline*}
	\E\Big|\sum_{j=1}^{n}\big((\tilde{x}_{ij})_{D}^{2}-\E (\tilde{x}_{ij})_{D}^{2}\big)\Big|^{\frac{q}{2}} \\
	\le C^{q} \bigg( q^{\frac{q}{2}}\Big(\sum_{j=1}^{n}\E|(\tilde{x}_{ij})_{D}^{2}-\E (\tilde{x}_{ij})_{D}^{2}|^{\frac{q}{2}}\Big) + q^{\frac{q}{4}}\Big(\sum_{j=1}^{n}\E|(\tilde{x}_{ij})_{D}^{2}-\E (\tilde{x}_{ij})_{D}^{2}|^{2}\Big)^{\frac{q}{4}} \bigg),
\end{multline*}
for some constant $C>0$.
Since $(\tilde{x}_{ij})_{D}$ is bounded and $\E (\tilde{x}_{ij})_{D}^{2} \asymp N^{-1+\alpha\eps_{N}}$,
\begin{equation*}
	\E\lVert (\widetilde{X}_{D})_{i\cdot}\rVert_{2}^{q}\le C^{q} \big( (N^{\alpha\eps_{N}})^{\frac{q}{2}} + q^{\frac{q}{2}}N + q^{\frac{q}{4}}(N^{\alpha\eps_{N}})^{\frac{q}{4}} \big),
\end{equation*}
where $C>0$ may depend on $C_{u}$ and the aspect ratio $\mathsf{a}$.

Let $(\widetilde{X}_{D})_{\cdot j}$ be the $j$-th column of $\widetilde{X}_{D}$. We can get a similar estimate for $\E\lVert (\widetilde{X}_{D})_{\cdot j}\rVert_{2}^{q}$. Then, applying Proposition \ref{prop: Seginer}, we may write
\begin{equation*}
	\E\lVert \widetilde{X}_{D} \rVert_{2}^{q}\le 2C^{q} N \big( (N^{\alpha\eps_{N}})^{\frac{q}{2}} + q^{\frac{q}{2}}N + q^{\frac{q}{4}}(N^{\alpha\eps_{N}})^{\frac{q}{4}} \big).
\end{equation*}

Recall that $N^{\alpha\eps_{N}}\ge c'\log{N}$. By setting $q=c'\log N$ and choosing $C_{\ref{lem: 212}}>0$ sufficiently large, the desired result follows from the Markov inequality.
\end{proof}

\subsection{Proof of Proposition \ref{prop: 240}}

 In order to prove Proposition \ref{prop: 240}, we will closely follow the strategy in \cite{Tikhomirov16}, particularly by applying Proposition \ref{prop: 153}. However, as we mentioned above, one difference here is: by considering the decomposition \eqref{eq: 235}, we focus on $\widetilde{X}_{D}$, which resembles a sparse random matrix with sparsity of order $\log n/n$. The sparsity of  $\widetilde{X}_{D}$ leads to worse anti-concentration bounds in various steps, necessitating adjustments to resolve the issue.
Let us present the following technical results, which will be used later together with Proposition \ref{prop: 153}.

\begin{lemma}\label{lem: 331}
Let $c>0$ and $c'',\delta,\delta'\in(0,1)$.
Let $\eps_{N}$ be as in \eqref{eq: 152}.
We define $\mathcal{S}_{0}=\mathcal{S}_{0}(c,\delta,\eps_{N})$ as in Definition \ref{def: 174}.
There exists a finite set $\mathcal{N}_{0}$ and a constant $C_{\ref{lem: 331}}>0$ such that the following holds.
\begin{enumerate}
	\item[(i)] For any $v\in\mathcal{S}_{0}$, there is a vector $v'\in\mathcal{N}_{0}$ satisfying $\lVert v|_{\textnormal{supp}(v')} - v' \rVert_{2} \le {\delta'}$.
	\item[(ii)] For every $v'\in\mathcal{N}_{0}$, we have $\lVert v' \rVert_{2} \ge {\delta}\sqrt{{c''}/2}$, and $\lVert v' \rVert_{\infty}\le {c}N^{(-1+\alpha\eps_{N})/2}$ for large $n$.
	\item[(iii)] $|\mathcal{N}_{0}| \le \exp\big(\log(C_{\ref{lem: 331}}/{\delta'}{c''} )\cdot \lceil {c''}n \rceil\big)$.
\end{enumerate}
\end{lemma}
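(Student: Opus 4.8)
The plan is to build $\mathcal{N}_0$ as a standard $\delta'$-net, but adapted so that (a) we only need to net the ``tail'' part of each $v \in \mathcal{S}_0$, i.e.\ the restriction $v_{I^c}$ which lives on coordinates of size at most $cN^{(-1+\alpha\eps_N)/2}$, and (b) we further restrict to a \emph{sparse} approximation so that supports have size $\lceil c'' n\rceil$, which is what makes the cardinality bound in (iii) workable. First I would fix $v\in\mathcal{S}_0$. By definition $\|v_{I^c}\|_2^2 > \delta$, and $\|v_{I^c}\|_\infty \le cN^{(-1+\alpha\eps_N)/2}$. The key elementary observation is that a unit-ish vector whose sup-norm is at most $\theta$ must have ``effective support'' of size at least $\sim \|v_{I^c}\|_2^2/\theta^2$; more usefully, for any target sparsity $m=\lceil c'' n\rceil$, if we keep the $m$ largest-in-magnitude coordinates of $v_{I^c}$, call this set $S=S(v)$ with $|S|=m$, then the discarded mass satisfies $\|v_{I^c \setminus S}\|_2^2 \le m\cdot (\text{value of }(m{+}1)\text{-st largest})^2$. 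Since the coordinates on $I^c$ are bounded by $cN^{(-1+\alpha\eps_N)/2}$, one actually gets a cruder but sufficient bound: the restriction $v_S$ still carries most of the mass of $v_{I^c}$ provided $c''$ is chosen appropriately relative to $c,\delta$. I would record that $\|v_S\|_2^2 \ge \delta\cdot (c''/2)$ is easily arranged (this is exactly the lower bound appearing in (ii)); the factor $c''/2$ is deliberately generous to absorb the truncation loss.

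Next, having reduced to vectors supported on a set $S$ of size $m$ with entries bounded by $cN^{(-1+\alpha\eps_N)/2}$ and $\ell^2$-norm at most $1$, I would apply a volumetric $\delta'$-net argument on the $\ell^2$-ball of $\mathbb{R}^S \cong \mathbb{R}^m$: there is a net $\mathcal{N}_S$ of the unit ball with $|\mathcal{N}_S|\le (C/\delta')^m$ such that every $v_S$ is within $\delta'$ of some $v'\in\mathcal{N}_S$ in $\ell^2$; one can moreover take the net points to satisfy $\|v'\|_\infty \le cN^{(-1+\alpha\eps_N)/2}$ (either by intersecting the net with the box and adjusting constants, or by noting the nearest net point to a vector in the box is again essentially in the box up to $\delta'$, which we fold into the constants). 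Then $\|v|_{\mathrm{supp}(v')} - v'\|_2 = \|v_S - v'\|_2 \le \delta'$, giving (i). Setting $\mathcal{N}_0 = \bigcup_{|S|=m}\mathcal{N}_S$ and using $\binom{n}{m}\le (en/m)^m = (e/c'')^{\lceil c'' n\rceil}$ gives $|\mathcal{N}_0| \le (e/c'')^{\lceil c'' n\rceil}(C/\delta')^{\lceil c'' n\rceil} = \exp\big(\log(C_{\ref{lem: 331}}/(\delta' c''))\cdot \lceil c'' n\rceil\big)$ for a suitable $C_{\ref{lem: 331}}$, which is (iii).

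For (ii), the sup-norm bound $\|v'\|_\infty \le cN^{(-1+\alpha\eps_N)/2}$ comes from the construction of $\mathcal{N}_S$ inside (a slight dilation of) the box; ``for large $n$'' appears only because absorbing $\delta'$ into the box requires $\delta'$ small compared to $cN^{(-1+\alpha\eps_N)/2}$, which holds eventually, or alternatively one simply truncates each net coordinate at the threshold. The lower bound $\|v'\|_2\ge \delta\sqrt{c''/2}$ follows by combining $\|v_S\|_2^2\ge \delta c''/2$ (from the truncation step, after fixing $c''$ small enough in terms of $c$ and $\delta$) with $\|v'\|_2 \ge \|v_S\|_2 - \delta'$ and then re-choosing constants so that the $\delta'$ loss is harmless; concretely one first picks $c''$, then picks $\delta'$ small relative to $\delta\sqrt{c''}$.

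The main obstacle — and the only place requiring genuine care rather than bookkeeping — is the truncation-to-sparse-support step: one must verify quantitatively that keeping only $m = \lceil c'' n\rceil$ coordinates of $v_{I^c}$ retains an $\Omega(\delta)$ fraction of its $\ell^2$-mass, \emph{uniformly} over $v\in\mathcal{S}_0$, using only the sup-norm bound $cN^{(-1+\alpha\eps_N)/2}$ on those coordinates together with the sparsity budget. The point is that $\|v_{I^c}\|_2^2 \le |I^c|\cdot (cN^{(-1+\alpha\eps_N)/2})^2$ forces $|I^c|$ to be reasonably large, but we need the complementary statement that the mass cannot be too spread out beyond $m$ coordinates; this is handled by the rearrangement inequality, noting that the tail sum of squares past rank $m$ of any sequence bounded by $\theta$ with total $\le 1$ is controlled once $m\theta^2$ is of constant order — and here $m\theta^2 = c'' n\cdot c^2 N^{-1+\alpha\eps_N} = c''c^2 \mathsf{a}^{-1}N^{\alpha\eps_N}$, which with $N^{\alpha\eps_N}\ge c'\log N \to \infty$ is comfortably large, so in fact essentially \emph{all} of $\|v_{I^c}\|_2$ survives the truncation and the factor $c''/2$ in (ii) is very safe. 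Once this is pinned down the rest is the routine net-counting above.
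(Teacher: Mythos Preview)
Your approach is essentially the paper's: pass from each $v\in\mathcal{S}_0$ to an $m$-sparse restriction ($m=\lceil c''n\rceil$) sitting inside $I^c$, then run a volumetric net over all $\binom{n}{m}$ supports. The paper does exactly this, choosing the sparse piece by a pigeonhole partition of $I^c$ into $\lceil n/m\rceil$ blocks and taking the heaviest; it then takes $\mathcal{N}_0$ as a maximal $\delta'$-separated subset of $\mathcal{T}_0=\{y_J:y\in\mathcal{S}_0\}$, so that property (ii) is inherited automatically.

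That said, your justification of the truncation step is muddled and in places wrong. First, the bound you write for the discarded mass, $\|v_{I^c\setminus S}\|_2^2\le m\cdot(\text{$(m{+}1)$-st largest})^2$, is incorrect: the factor should be $|I^c|-m$, not $m$. Second, and more importantly, the argument that ``$m\theta^2\to\infty$ so essentially all of $\|v_{I^c}\|_2$ survives'' is false. Take $v_{I^c}$ with all $n$ coordinates equal to $\sqrt{\delta/n}$ (this satisfies the sup-norm constraint once $N^{\alpha\eps_N}$ is large); then the top $m$ coordinates carry exactly $c''\delta$ of the mass, not ``essentially all'' of it. The sup-norm bound $\theta$ plays no role in this step. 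The correct one-line argument is pure averaging: sorting the squares, the mean of the top $m$ dominates the overall mean, so $\|v_S\|_2^2\ge (m/|I^c|)\|v_{I^c}\|_2^2\ge (m/n)\cdot\delta\ge c''\delta$, which is already $\ge \delta c''/2$. This is precisely what the paper's pigeonhole does.

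A smaller point on (ii): your route through a generic net of the ball forces you to recover $\|v'\|_2\ge\delta\sqrt{c''/2}$ via $\|v'\|_2\ge\|v_S\|_2-\delta'$, which imposes a constraint $\delta'\ll\delta\sqrt{c''}$ not present in the lemma's hypotheses. The clean fix (and what the paper does) is to take $\mathcal{N}_0$ as a maximal $\delta'$-separated subset of $\{v_S:v\in\mathcal{S}_0\}$ itself; then every $v'$ is literally some $v_S$, so both norm bounds in (ii) hold for free, while the cardinality bound is unchanged since such a set is still $\delta'$-separated in the ambient ball.
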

\begin{remark}
This lemma will be applied with $c = c(c'')$ and $\delta' = \delta'(c'')$, treated as functions of $c''$, to prove Proposition \ref{prop: 240}.
\end{remark}
\begin{proof}
Given any $y\in\mathcal{S}_{0}$ and any integer $m\in [n]$, it is easy to see that 
there exists an index  set $J\equiv J(y)\subset [n]$ such that $|J|\le m$, $\lVert y_{J} \rVert_2 \ge {\delta}\lceil n/m\rceil^{-1/2}$ and $\lVert y_{J} \rVert_{\infty}\le {c}N^{(-1+\alpha\eps_{N})/2}$, according to the definition of $\mathcal{S}_0$ in (\ref{eq: 207}).
For each $y\in\mathcal{S}_{0}$, we set $J=J(y)$ be as above, with $m=\lceil {c''}n \rceil$. Define $\mathcal{T}_{0}$ by 
\begin{equation*}
	\mathcal{T}_{0} = \{y_{J}:y\in\mathcal{S}_{0}\}.
\end{equation*}
A vector $v\in\mathbb{R}^{n-1}$ is said to be $l$-sparse if $|\text{supp}(v)|\le l$. We notice that $\mathcal{T}_{0}$ consists of $m$-sparse vectors satisfying the property (ii).

Then, applying a standard volumetric argument (e.g.~\cite[Eq.~(4.5)]{BR19} and \cite[Lemma 12]{Tikhomirov16}), we find a finite set $\mathcal{N}_{0}\subset \mathcal{T}_{0}$ satisfying the other desired properties.

\end{proof}

\begin{proposition}\label{prop: 335}
Consider any constant ${c''}\in(0,1)$.
Let $C_{\ref{thm: 555}}$ be as in Theorem \ref{thm: 555}.
Fix ${\delta}\in(0,1)$.
Choose ${c}$ by setting
\begin{equation}\label{eq: 535}
	\frac{6C_{\ref{thm: 555}}{c}}{{\delta}{(c'')}^{1/2}C_{\ell}^{1/2}2^{-(\alpha+1)/2}} = 1-\mathsf{a}^{-1/4}.
\end{equation}
Consider a vector $v'$ satisfying $\lVert v' \rVert_{2}\ge \delta\sqrt{c''/2}$ and $\lVert v' \rVert_{\infty}\le cN^{(-1+\alpha\eps_{N})/2}$.
There exist constants $c_{\ref{prop: 335}}>0$ and $c'_{\ref{prop: 335}}>0$ such that
\begin{equation}\label{eq: 653}
	\prob\big\{ \mathrm{d}\big(\widetilde{X}_{D}v',\widetilde{X}(E_{v'}^{\perp})+\widetilde{X}_{D^{c}}(E_{v'})
	\big)\le c_{\ref{prop: 335}}N^{\alpha\eps_{N}/2} \big\} \le e^{-c'_{\ref{prop: 335}}N}.
\end{equation}
In addition, the constant $c'_{\ref{prop: 335}}$ only depends on $\mathsf{a}$.
\end{proposition}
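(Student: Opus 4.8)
The plan is to realize the distance $\mathrm{d}\big(\widetilde{X}_{D}v',\widetilde{X}(E_{v'}^{\perp})+\widetilde{X}_{D^{c}}(E_{v'})\big)$ as the norm of a \emph{random projection} of the random vector $\widetilde{X}_{D}v'$ onto a subspace that is independent of it, and then to invoke the sparse analogue of Tikhomirov's anti-concentration machinery (this is what Theorem \ref{thm: 555} should provide, given the constant $C_{\ref{thm: 555}}$ appearing in \eqref{eq: 535}). Concretely, write $J=\mathrm{supp}(v')$ and split the columns of $\widetilde{X}$ accordingly. The vector $\widetilde{X}_{D}v'=\sum_{j\in J}(\tilde{x}_{\cdot j})_{D}v'_{j}$ depends only on the entries $\{(\tilde{x}_{ij})_D : i\in[N], j\in J\}$. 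On the other hand $\widetilde{X}(E_{v'}^{\perp})$ is spanned by the columns of $\widetilde{X}$ indexed by $J^{c}$, and $\widetilde{X}_{D^{c}}(E_{v'})$ is spanned by the heavy-tailed parts $\{(\tilde{x}_{\cdot j})_{D^{c}} : j\in J\}$ of the columns in $J$. Since the entries are independent and $(\tilde{x}_{ij})_D$ and $(\tilde{x}_{ij})_{D^c}$ are determined by disjoint events on $x_{ij}$ (one is supported on $D$, the other on $D^c$), the random subspace $H := \widetilde{X}(E_{v'}^{\perp})+\widetilde{X}_{D^{c}}(E_{v'})$ is independent of $\widetilde{X}_{D}v'$. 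Hence, conditionally on $H$, the quantity in \eqref{eq: 653} equals $\lVert P_{H^{\perp}}\,\widetilde{X}_{D}v' \rVert_2$, where $P_{H^{\perp}}$ is the orthogonal projection onto $H^{\perp}$; and since $\dim H \le |J^c| + |J| = n$ while the ambient dimension is $N=\mathsf{a} n$ with $\mathsf{a}>1$, the codimension $\mathrm{rank}(P_{H^\perp}) \ge (\mathsf{a}-1)n$ is linear in $n$. This is the gain that makes a random projection anti-concentrated.

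The second step is to verify that $\widetilde{X}_{D}v'$ has a "sufficiently rich" structure to survive such a projection. Here the hypotheses on $v'$ are used exactly as in Lemma \ref{lem: 331}(ii): $\lVert v' \rVert_2 \ge \delta\sqrt{c''/2}$ ensures the vector is macroscopically long, while $\lVert v' \rVert_\infty \le c N^{(-1+\alpha\eps_N)/2}$ ensures no single coordinate dominates, so that after multiplying by the sparse-Bernoulli-like weights $(\tilde{x}_{ij})_D$ (each nonzero with probability $\asymp N^{-1+\alpha\eps_N}$ and then of order $1$, with variance $\asymp N^{-1+\alpha\eps_N}$ by \eqref{eq: 248}) the resulting random vector has i.i.d.\ coordinates each with variance $\asymp N^{\alpha\eps_N-1}\lVert v'\rVert_2^2 \asymp (c'')\delta^2 N^{\alpha\eps_N-1}$ and a uniform (in $i$) anti-concentration bound $\mathcal{Q}\big((\widetilde{X}_D v')_i, \,t\,N^{(\alpha\eps_N-1)/2}\big) \le 1 - c_0$ for $t$ below an absolute threshold, using the lower tail $\prob\{\tilde x_{ij}\in D_2\}\ge C_\ell 2^{-(\alpha+1)}N^{-1+\alpha\eps_N}$ together with the symmetry of $x_{11}$. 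Plugging these two features — the per-coordinate scale $N^{(\alpha\eps_N-1)/2}$ and the linear codimension of $H^\perp$ — into Theorem \ref{thm: 555} yields that $\lVert P_{H^\perp}\widetilde{X}_D v'\rVert_2$ is, with probability at least $1-e^{-c' N}$, at least a constant multiple of $\sqrt{(\mathsf{a}-1)n}\cdot N^{(\alpha\eps_N-1)/2} \asymp N^{\alpha\eps_N/2}$; this is precisely \eqref{eq: 653}, and tracking the constants through \eqref{eq: 535} fixes the admissible choice of $c$ and shows $c'_{\ref{prop: 335}}$ depends only on $\mathsf{a}$ (the exponential rate comes solely from the codimension being $(\mathsf{a}-1)n$).

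The main obstacle I expect is the sparse anti-concentration input itself — i.e., establishing (or correctly quoting) Theorem \ref{thm: 555}. In Tikhomirov's dense setting the coordinates of $Y_1 v'$ are order-$1$ random variables with order-$1$ anti-concentration, and the Paley–Zygmund/tensorization arguments and the "random projections are anti-concentrated" lemma (\cite{Tikhomirov16}) apply directly. Here the nonzero entries of $\widetilde{X}_D$ occur only on a $\asymp \log n / n$ fraction of positions, so $\widetilde{X}_D v'$ is itself a genuinely sparse random vector: on most coordinates it is either zero or a sum of very few order-$1$ terms, which degrades the small-ball and variance-vs-fourth-moment comparisons that the projection lemma relies on. The resolution is to choose $\eps_N$ with $N^{\alpha\eps_N}\ge c'\log N$ so that, although sparse, each coordinate $(\widetilde{X}_D v')_i$ still carries $\Theta(\log n)$ "effective" nonzero contributions in expectation, enough for a Bernstein-type concentration of $\lVert \widetilde{X}_D v'\rVert_2^2$ around its mean $\asymp n N^{\alpha\eps_N -1}\lVert v'\rVert_2^2$ and for the quadratic form $\lVert P_{H^\perp}\widetilde{X}_D v'\rVert_2^2 = \langle \widetilde{X}_D v', P_{H^\perp}\widetilde{X}_D v'\rangle$ to concentrate via a Hanson–Wright-type bound conditionally on $H$; the operator-norm control $\lVert \widetilde{X}_D\rVert \le C_{\ref{lem: 212}} N^{\alpha\eps_N/2}$ from Lemma \ref{lem: 212} supplies the matching upper bound needed to close the argument. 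A secondary, more routine, subtlety is bookkeeping the three-way independence between $\widetilde{X}_D$ restricted to columns $J$, the columns $J^c$ of $\widetilde{X}$, and $\widetilde{X}_{D^c}$ restricted to $J$ — all of which is legitimate precisely because $\{x_{ij}\in D\}$ and $\{x_{ij}\in D^c\}$ partition the sample space and distinct $(i,j)$ are independent, so I would state this carefully but not belabor it.
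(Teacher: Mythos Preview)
The central gap is the independence claim in your first paragraph. You assert that $\widetilde{X}_{D}v'$ and $H=\widetilde{X}(E_{v'}^{\perp})+\widetilde{X}_{D^{c}}(E_{v'})$ are independent because $(\tilde{x}_{ij})_{D}$ and $(\tilde{x}_{ij})_{D^{c}}$ are ``determined by disjoint events on $x_{ij}$''. This is false: the two are complementary truncations of the \emph{same} random variable, so if one is nonzero the other is forced to vanish. They are therefore highly dependent, and $\widetilde{X}_{D^{c}}(E_{v'})$ is correlated with $\widetilde{X}_{D}v'$ through the columns in $J=\mathrm{supp}(v')$. Conditioning on $H$ changes the law of $\widetilde{X}_{D}v'$ in an uncontrolled way, and the projection argument cannot proceed as you describe.

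The paper repairs this by first conditioning on the event $\mathcal{E}_{\mathcal{P}}$ that fixes \emph{which} entries fall in $D$ and which in $D^{c}$ (see \eqref{eq: 333} and Lemma \ref{lem: 565}). Given $\mathcal{E}_{\mathcal{P}}$, the values of the entries in $D$ are genuinely independent of the values and locations of the entries in $D^{c}$, so the conditional independence you want does hold. The cost is that conditioning on $\mathcal{E}_{\mathcal{P}}$ may ruin the per-row anti-concentration of $\widetilde{X}_{D}v'$; Lemma \ref{lem: 565} shows that for a union of such $\mathcal{P}$'s of probability $\ge 1-e^{-cN}$, at least $N\mathsf{a}^{-1/2}$ rows retain conditional anti-concentration $1-\tau$ with $\tau=\tau(\mathsf{a})$, and only then is the Rudelson--Vershynin projection small-ball bound (Lemma \ref{lem: 1081}) applied. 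A secondary confusion: Theorem \ref{thm: 555} in the paper is Rogozin's one-dimensional inequality for $\mathcal{Q}(\sum_{j}Z_{j},s)$, not a projection or tensorization statement; it is combined with a symmetrized copy $\widetilde{X}'$ and the specific choice \eqref{eq: 535} to obtain the per-row bound $\mathcal{Q}\big(p^{-1/2}\sum_{j}((\tilde{x}_{ij})_{D}-(\tilde{x}'_{ij})_{D})v'_{j},\,c\big)\le 1-\mathsf{a}^{-1/4}$, which is the input \eqref{eq: 272} to Lemma \ref{lem: 565}. Your proposed Hanson--Wright/Bernstein route does not address the independence obstruction and would in any case require the same conditioning to control the law of $\widetilde{X}_{D}v'$ given $H$.
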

\begin{proof}
Recalling \eqref{eq: 232}, we have $\mathrm{d}(D_{1},D_{2})\ge 1$ and \eqref{eq: 248}.
To derive a concentration inequality, we introduce a copy of $\widetilde{X}$ as follows. 
Let $\widetilde{X}'=(\tilde{x}_{ij}')$ be an $N\times n$ random matrix having the same distribution as $\widetilde{X}$ such that $\tilde{x}_{ij}$ and $\tilde{x}_{ij}'$ are conditionally i.i.d.~given event $\{\tilde{x}_{ij}\in D\}$ and identical on $\{\tilde{x}_{ij}\in D^{c}\}$.

We can observe that for large $n$
\begin{equation*}
    \prob\{|(\tilde{x}_{ij})_{D}-(\tilde{x}_{ij}')_{D}|\ge 1\} \ge \prob\{\tilde{x}_{ij}\in D_{1}; \tilde{x}_{ij}'\in D_{2}\} + \prob\{\tilde{x}_{ij}\in D_{2}; \tilde{x}_{ij}'\in D_{1}\} \ge C_{\ell}2^{-(\alpha+1)}N^{-1+\alpha\eps_{N}}.
\end{equation*}
Recall the definition of the concentration function from (\ref{eq: 345}). 
By symmetry, we notice that 
\begin{equation}\label{eq: 515}
	\mathcal{Q}\big((\tilde{x}_{ij})_{D}-(\tilde{x}_{ij}')_{D},1/3\big)\le 1 - C_{\ell}2^{-(\alpha+1)}N^{-1+\alpha\eps_{N}}.
\end{equation}

By assumption, we have $\lVert v' \rVert_{2}\ge \delta\sqrt{c''/2}$ and $\lVert v' \rVert_{\infty}\le cN^{(-1+\alpha\eps_{N})/2}$.
We write $p=N^{-1+\alpha\eps_{N}}$. Using Theorem \ref{thm: 555}, we have
\begin{align*}
	\prob\bigg\{ \frac{1}{\sqrt{p}}\Big|\sum_{j=1}^{n}((\tilde{x}_{ij})_{D}-(\tilde{x}'_{ij})_{D})v'_{j}\Big|\le {c} \bigg\}
	&\le \mathcal{Q}\Big(\sum_{j=1}^{n}((\tilde{x}_{ij})_{D}-(\tilde{x}'_{ij})_{D})v'_{j},{c}\sqrt{p}\Big) \nonumber\\
	&\hspace{-15ex}\le C_{\ref{thm: 555}} \cdot {c}\sqrt{p} \cdot \bigg(\frac{1}{9}\sum_{j=1}^{n}\Big(1-\mathcal{Q}\big( ((\tilde{x}_{ij})_{D}-(\tilde{x}'_{ij})_{D})v'_{j},|v'_{j}|/3 \big)\Big) (v'_{j})^{2} \bigg)^{-1/2} \nonumber\\
	&\hspace{-15ex} \le \frac{6C_{\ref{thm: 555}}{c}}{{\delta}{(c'')}^{1/2}\sqrt{C_{\ell}2^{-(\alpha+1)}}},
\end{align*}
where we used \eqref{eq: 515} for the last inequality.
Due to the assumption \eqref{eq: 535}, we have
\begin{equation*}
	\prob\bigg\{ \frac{1}{\sqrt{p}}\Big|\sum_{j=1}^{n}((\tilde{x}_{ij})_{D}-(\tilde{x}'_{ij})_{D})v'_{j}\Big|\le {c} \bigg\}
	\le 1-\mathsf{a}^{-1/4}, \quad i\in [N].
\end{equation*}
We can complete the proof using the following lemma, 
whose proof is the same as Lemma 9 of \cite{Tikhomirov16}, and thus is omitted. 

\begin{lemma}\label{lem: 565}
Set $p=N^{-1+\alpha\eps_{N}}$.
Consider $v=(v_{1},v_{2},\cdots,v_{n})\in\R^{n}$ and $s>0$ such that
\begin{equation}\label{eq: 272}
    \prob\bigg\{\bigg| \frac{1}{\sqrt{p}}\sum_{j=1}^{N} \big( (\tilde{x}_{ij})_{D} - (\tilde{x}_{ij}')_{D} \big)v_{j} \bigg| > s\bigg\}
    \ge \mathsf{a} ^{-1/4}, \quad i\in [N].
\end{equation}
For a subset $\mathcal{P}\subset [N]\times [n]$, define
\begin{equation}\label{eq: 333}
	{\mathcal{E}_{\mathcal{P}} = \{ \text{$\tilde{x}_{ij}\in D$ for all $(i,j)\in\mathcal{P}$ and $\tilde{x}_{ij}\in D^{c}$ for all $(i,j)\in\mathcal{P}^{c}$} \}.}
\end{equation}
Define $\mathcal{C}$ as the collection of all subsets $\mathcal{P}$ satisfying
\begin{equation*}
    \prob(\mathcal{E}_{\mathcal{P}})>0 \quad \text{and} \quad 
    \bigg| \bigg\{ i\in\{1,2,\cdots,N\}: \mathcal{Q}_{\mathcal{E}_{\mathcal{P}}}\bigg(\frac{1}{\sqrt{p}}\sum_{j=1}^{n}(\tilde{x}_{ij})_{D}v_{j},\frac{s}{2}\bigg)\le 1-\tau \bigg\} \bigg| \ge N\mathsf{a}^{-1/2},
\end{equation*}
where $\tau=\frac{1}{2}(\mathsf{a}^{-1/4}-\mathsf{a}^{-1/3})$ and for an event $\mathcal{E}$, 
the function $\mathcal{Q}_{\mathcal{E}}$ represents the concentration function conditioned on $\mathcal{E}$, defined as:
\begin{equation*}
\mathcal{Q}_{\mathcal{E}}(\cdot,t) = \sup_{\nu\in\mathbb{R}}\prob(\lVert \cdot-\nu \rVert_2\le t \mid \mathcal{E}),
\quad t > 0.
\end{equation*}
Then, there exists a constant $c_{\ref{lem: 565}}>0$ only depending on $\mathsf{a}$ such that
\begin{equation*}
    \prob\bigg( \bigcup_{\mathcal{P}\in\mathcal{C}}\mathcal{E}_{\mathcal{P}} \bigg) \ge 1 - \exp(-c_{\ref{lem: 565}}N).
\end{equation*}
\end{lemma}

Let $\mathcal{C}$ and $\tau$ be as in Lemma \ref{lem: 565} with $v=v'$ and $s={c}$. Take $\mathcal{P}\in\mathcal{C}$.
Set
\begin{equation*}
	\mu = \bigg| \bigg\{ i\in [N]: \mathcal{Q}_{\mathcal{E}_{\mathcal{P}}}\bigg(\frac{1}{\sqrt{p}}\sum_{j=1}^{n}(\tilde{x}_{ij})_{D}v'_{j},\frac{{c}}{2}\bigg)\le 1-\tau \bigg\} \bigg|.
\end{equation*}
Since we choose $\mathcal{P}\in\mathcal{C}$, we note that $\mu \ge N\mathsf{a}^{-1/2}$. 
Using Lemma \ref{lem: 1081} with $d\ge\mu-n$ and $\ell=e\cdot C_{\ref{lem: 1081}}^{2}\cdot\tau^{-1}$,
for $\kappa=\mathsf{a}^{-1/2}-\mathsf{a}^{-1}$ and an $n$-dimensional subspace $F\subset\mathbb{R}^{N}$, we obtain
\begin{equation}\label{eq: 774}
	\prob_{\mathcal{E}_{\mathcal{P}}}\{ \mathrm{d}(\widetilde{X}_{D}v',F) \le \frac{{c}}{2\ell}\sqrt{\kappa pN} \} \le e^{-\kappa N/2\ell},
\end{equation}
where we use the fact that $\mathrm{d}(\widetilde{X}_{D}v',F) = \textnormal{Proj}_{F^{\perp}}(\widetilde{X}_{D}v')$ for the orthogonal complement $F^{\perp}$ of the subspace $F$.

Noticing that $\widetilde{X}_{D}v'$ and $\widetilde{X}(E_{v'}^{\perp}) + \widetilde{X}_{D^{c}}(E_{v'})$ are conditionally independent given the event $\mathcal{E}_{\mathcal{P}}$, the desired result follows from Lemma \ref{lem: 565}.
\end{proof}

\begin{proof}[Proof of Proposition \ref{prop: 240}]
Let us set $c$ as in \eqref{eq: 535}.
Recall the definitions of $\kappa, \ell, \tau$ and $p$ in the proof of Proposition \ref{prop: 335}. 
By \eqref{eq: 535} and \eqref{eq: 774}, we define $c_{\ref{prop: 335}}$ as
\begin{equation*}
	c_{\ref{prop: 335}} = (1-\mathsf{a}^{-1/4}) \delta(c'')^{1/2} C_{\ell}^{1/2} 2^{-(\alpha+5)/2} 6^{-1} C_{\ref{thm: 555}}^{-1} e^{-1} C_{\ref{lem: 1081}}^{-2} (\mathsf{a}^{-1/4}-\mathsf{a}^{-1/3}) (\mathsf{a}^{-1/2}-\mathsf{a}^{-1})^{1/2}.
\end{equation*}

For a constant $K>0$, let $C_{\ref{lem: 212}}(K)$ be as in Lemma \ref{lem: 212}.
To apply a standard net argument along with Lemma \ref{lem: 331}, we need to set $c''$ and $\delta'$ appropriately. We define $\delta'$ by setting
\begin{equation}\label{eq: 791}
	\delta' = \frac{c_{\ref{prop: 335}}}{2C_{\ref{lem: 212}}(K)}.
\end{equation}
(Note that $\delta'$ depends on $c''$ as defined above.) Next, recalling that $c'_{\ref{prop: 335}}$ only depends on $\mathsf{a}$, we choose $c''>0$ sufficiently small such that
\begin{equation}\label{eq: 777}
c''\log(C_{\ref{lem: 331}}/\delta' c'') < \frac{c'_{\ref{prop: 335}}}{4}.	
\end{equation}

Let $\mathcal{N}_{0}$ be the net as in Lemma \ref{lem: 331}. For any $v\in\mathcal{S}_{0}$, we can find $v'\in\mathcal{N}_{0}$ such that  $\lVert v|_{\textnormal{supp}(v')} - v' \rVert_{2} \le \delta'$. In addition, since this $v'$ satisfies $\lVert v' \rVert_{2} \ge \delta\sqrt{c''/2}$ and $\lVert v' \rVert_{\infty}\le cN^{(-1+\alpha\epsilon_{N})/2}$ for large $n$, we can apply Proposition \ref{prop: 335} to obtain the estimate \eqref{eq: 653}. Due to our choice of $c''$ in \eqref{eq: 777}, we observe that
\begin{equation*}
	|\mathcal{N}_{0}| \cdot \max_{v'\in\mathcal{N}_{0}} \prob\big\{ \mathrm{d}\big(\widetilde{X}_{D}v',\widetilde{X}(E_{v'}^{\perp})+\widetilde{X}_{D^{c}}(E_{v'})
	\big)\le c_{\ref{prop: 335}}N^{\alpha\epsilon_{N}/2} \big\} \le e^{-c'_{\ref{prop: 335}}N/2}.
\end{equation*}

Now, we are ready to apply Proposition \ref{prop: 153}. Together with Lemma \ref{lem: 212} (upper bound of $\lVert \widetilde{X}_{D}\rVert$), we complete the proof.
\end{proof}

\subsection{Upper bound of the smallest singular value}

\begin{theorem}\label{thm: upper bound}
Suppose $\alpha\in(0,2)$.
For any $K>0$, there exist a constant $C_{\ref{thm: upper bound}}>0$ such that 
\begin{equation*}
	\prob\{s_{\min}(X) \ge C_{\ref{thm: upper bound}}N^{\frac{1}{\alpha}}(\log N)^{\frac{\alpha-2}{2\alpha}}\}= O(N^{-K}).
\end{equation*}
In addition, the constant $C_{\ref{thm: upper bound}}$ may depend on $K$ and $\mathsf{a}$.
\end{theorem}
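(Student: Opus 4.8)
The plan is to turn the heavy tails to our advantage from the \emph{small} side: I would look for one column of $X$ all of whose entries happen to be unusually small, and then bound $s_{\min}(X)$ above by the Euclidean norm of that column. The starting point is the trivial consequence of the variational formula \eqref{eq: 97}: testing against a standard basis vector $u=e_j$ gives $s_{\min}(X)\le\|Xe_j\|_2=\|X_{\cdot j}\|_2$ for every $j\in[n]$, hence $s_{\min}(X)\le\min_{j\in[n]}\|X_{\cdot j}\|_2$. (Equivalently, restricting the infimum in \eqref{eq: 97} to vectors supported on a subset $L$, or Cauchy interlacing, bounds $s_{\min}(X)$ by $s_{\min}(X_L)\le\|X_L\|$ for any column submatrix $X_L$; if one prefers to work with a small block $L$ of several columns rather than a single one, $\|X_L\|$ can be controlled via Seginer's inequality, Proposition~\ref{prop: Seginer}, as indicated in Section~\ref{ss.proof strategy}, but for the bound claimed here a single column is already enough.) So the task reduces to showing that, with probability $1-O(N^{-K})$, at least one column of $X$ has $\ell^2$-norm $\le C_{\ref{thm: upper bound}}N^{1/\alpha}(\log N)^{(\alpha-2)/(2\alpha)}$.

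To estimate a single column I would fix a small constant $c''\in(0,1/(2C_u))$, set the truncation level $b=b_N:=(N/(c''\log N))^{1/\alpha}$ so that $Nb^{-\alpha}=(c'')^{-1}\log N$, and introduce for each $j$ the event $\mathcal{A}_j=\{|x_{ij}|\le b\ \text{for all }i\in[N]\}\cap\{\sum_{i=1}^N x_{ij}^2\le A\}$ with $A:=4C_1Nb^{2-\alpha}$, where $C_1=C_1(\alpha,C_u,T_0)$ is chosen so that $\E[x_{11}^2\indic\{|x_{11}|\le b\}]\le C_1b^{2-\alpha}$ for large $b$ — this bound being obtained from \eqref{eq: 81} by integrating the tail and using $2-\alpha>0$. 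Since the $N$ entries of a column are i.i.d., the conditioning event $\{|x_{ij}|\le b\ \forall i\}$ is a product event, so $\prob\{|x_{ij}|\le b\ \forall i\}=(1-\prob\{|x_{11}|>b\})^N\ge(1-C_uc''\log N/N)^N\ge N^{-2C_uc''}$ for large $N$, and conditionally on it the entries are again i.i.d., now with the law of $x_{11}$ restricted to $[-b,b]$, whose second moment is at most $2C_1b^{2-\alpha}$. Markov's inequality then gives $\prob\{\sum_i x_{ij}^2>A\mid|x_{ij}|\le b\ \forall i\}\le 1/2$, hence $\prob(\mathcal{A}_j)\ge\tfrac12 N^{-2C_uc''}$; fixing $c''=1/(4C_u)$ makes this $\ge\tfrac12 N^{-1/2}$.

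Finally I would amplify over the $n$ columns. The events $\mathcal{A}_1,\dots,\mathcal{A}_n$ involve disjoint sets of entries, so they are independent, and $\prob\{\bigcap_{j=1}^n\mathcal{A}_j^c\}=(1-\prob(\mathcal{A}_1))^n\le\exp(-\tfrac12 N^{-1/2}n)=\exp(-\tfrac{1}{2\mathsf{a}}\sqrt N)=O(N^{-K})$ for every $K$. On the complementary event some $\mathcal{A}_{j_0}$ occurs, whence $s_{\min}(X)\le\|X_{\cdot j_0}\|_2\le\sqrt A$, and a direct computation gives $\sqrt A=2\sqrt{C_1}\,(c'')^{(\alpha-2)/(2\alpha)}N^{1/\alpha}(\log N)^{(\alpha-2)/(2\alpha)}$, so one may take $C_{\ref{thm: upper bound}}:=2\sqrt{C_1}\,(c'')^{(\alpha-2)/(2\alpha)}$ (depending only on $\alpha,C_u,T_0$). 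The one point requiring care — and the only place where the precise exponent of $\log N$ is forced — is the choice of $b$: it must sit exactly at the scale $Nb^{-\alpha}\asymp\log N$, so that the probability $e^{-\Theta(Nb^{-\alpha})}=N^{-\Theta(1)}$ that a column is ``all-small'' remains polynomially large, comfortably above the $\log N/n$ threshold needed for the union bound over the $n$ columns, while at the same time the truncated column length $\sqrt{Nb^{2-\alpha}}$ lands on the target order $N^{1/\alpha}(\log N)^{(\alpha-2)/(2\alpha)}$. This is precisely the critical sparsity scale $\asymp\log n/n$ that already appears in the decomposition \eqref{eq: 235}. Everything else is a routine second-moment estimate; in particular no use of the symmetry Assumption~\ref{assump: 89} is needed for the upper bound.
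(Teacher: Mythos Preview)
Your argument is correct and follows the same core idea as the paper: choose the truncation level $b\asymp (N/\log N)^{1/\alpha}$ so that the probability a column has all entries bounded by $b$ is $N^{-\Theta(1)}$, exhibit such a ``small'' column with high probability, and bound $s_{\min}(X)$ by its norm. The technical execution differs slightly. The paper introduces a resampling decomposition $X\overset{d}{=}\Psi\circ Y+(\mathbf 1-\Psi)\circ Z$, collects \emph{all} all-small columns into a minor $Y^{(\mathfrak m)}$, and bounds the operator norm of that minor via Seginer's inequality (Proposition~\ref{prop: Seginer}) combined with a Rosenthal moment estimate. You instead stay with a single column and a direct Markov bound on $\sum_i x_{ij}^2$ after conditioning on $\{|x_{ij}|\le b\ \forall i\}$; this avoids both the resampling and Seginer, and is more elementary. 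The payoff of the paper's block approach is the extension to the $k$-th smallest singular value in Corollary~\ref{cor: 729}: there one needs $\mathfrak m>k$ small columns so that any $(n-k)$-dimensional subspace intersects their span, and a single column would not suffice. Your parenthetical remark about passing to a block $L$ and invoking Seginer is exactly the adjustment required for that corollary.
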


\begin{proof}

Suppose $\tilde{\eps}_{N}\in(0,1/\alpha)$, whose precise value will be chosen later.
Let $\Psi=(\psi_{ij})$ be an $N\times n$ random matrix such that all $\psi_{ij}$'s are independent Bernoulli random variables defined by
\begin{equation}\label{eq: 863}
	\psi_{ij} =
	\begin{cases}
		1, & \text{with probability $\prob\{|x_{ij}|\le N^{\frac{1}{\alpha}-\tilde{\eps}_{N}}\}$},\\
		0, & \text{with probability $\prob\{|x_{ij}|> N^{\frac{1}{\alpha}-\tilde{\eps}_{N}}\}$}.
	\end{cases}
\end{equation}
For each $(i,j)\in [N]\times [n]$, let $y_{ij}$ and $z_{ij}$ be independent random variables such that
\begin{align}
	\prob\{y_{ij}\in \tilde{I}\} &= \frac{ \prob\{x_{ij}\in[-N^{\frac{1}{\alpha}-\tilde{\eps}_{N}},N^{\frac{1}{\alpha}-\tilde{\eps}_{N}}]\cap \tilde{I}\} }{ \prob\{|x_{ij}|\le N^{\frac{1}{\alpha}-\tilde{\eps}_{N}}\} },\\
	\prob\{z_{ij}\in \tilde{I}\} &= \frac{ \prob\{x_{ij}\in((-\infty,-N^{\frac{1}{\alpha}-\tilde{\eps}_{N}})\cup(N^{\frac{1}{\alpha}-\tilde{\eps}_{N}},\infty))\cap \tilde{I}\} }{ \prob\{|x_{ij}|> N^{\frac{1}{\alpha}-\tilde{\eps}_{N}}\} }, \label{081410}
\end{align}
for every interval $\tilde{I}\subset\mathbb{R}$.
Furthermore, we assume that ${\psi_{ij}}$, ${y_{ij}}$, and ${z_{ij}}$ are independent. Then,  $\{x_{ij}\}$ has the same law as $\{y_{ij}\psi_{ij} + z_{ij}(1-\psi_{ij})\}$, i.e.,
\begin{equation*}
	X \overset{d}{=} \Psi \circ Y + (\mathbf{1}_{N,n}-\Psi) \circ Z,
\end{equation*}
where $Y=(y_{ij})$, $Z=(z_{ij})$ and $\mathbf{1}_{N,n}\in\mathbb{R}^{N\times n}$ is the all-one matrix.
This form of decomposition/resampling was previously used in \cite{Aggarwal19, ALY}.

Let $\Psi_{j}=(\psi_{1j}, \psi_{2j}, \cdots, \psi_{Nj})^{\mathsf{T}}$ be $j$-th column of $\Psi$.
We denote by $\mathbf{1}_{N}\in\mathbb{R}^{N}$ the  all-one vector.
For any constant $\mathfrak{a}>1$, we have
\begin{equation*}
	\exp(-\mathfrak{a}\cdot C_{u}N^{\alpha\tilde{\eps}_{N}})\le \prob\{ \Psi_{j} = \mathbf{1}_{N} \} \le \exp(-C_{\ell}N^{\alpha\tilde{\eps}_{N}}),
	\quad j\in [n].
\end{equation*}
Let $\mathfrak{m}$ be the number of all-one columns in $\Psi=(\Psi_{1},\Psi_{2},\cdots, \Psi_{n})$.
We observe that
\begin{equation}\label{eq: 231}
	\prob\{\mathfrak{m}=0\} \le \exp\big(-n\exp(-\mathfrak{a}\cdot C_{u}N^{\alpha\tilde{\eps}_{N}})\big).
\end{equation}
Let $Y^{(\mathfrak{m})}$ be the minor of $Y$ obtained by removing all columns with indices in $\{j: \Psi_{j}\neq\mathbf{1}_{N}\}$.
Fix any $K>0$.
Since
\begin{equation*}
	s_{\min}(\Psi Y + (\mathbf{1}_{N,n}-\Psi) Z) = \inf_{v\in\mathbb{S}^{n-1}} \lVert (\Psi \circ Y + (\mathbf{1}_{N,n}-\Psi)\circ  Z)v \rVert \le \lVert Y^{(\mathfrak{m})} \rVert,
\end{equation*}
using \eqref{eq: 231} and applying Proposition \ref{prop: Seginer} as in Lemma \ref{lem: 212}, we have for any $K>0$
\begin{equation*}
	\prob\{ \lVert Y^{(\mathfrak{m})}\rVert > C N^{\frac{1}{\alpha}-\alpha\tilde{\eps}_{N}(\frac{2-\alpha}{2\alpha})} \}
	\le N^{-K} + \exp\big(-n\exp(-\mathfrak{a}\cdot C_{u}N^{\alpha\tilde{\eps}_{N}})\big),
\end{equation*}
where $C$ depends on $K$, $C_{u}$ and $\mathsf{a}$.

For a constant $\mathfrak{b}\in(0,1)$, define $\tilde{\eps}_{N}$ through
\begin{equation}\label{eq: 721}
	N^{\alpha\tilde{\eps}_{N}} = \frac{\mathfrak{b}\log{N}}{\mathfrak{a}\cdot C_{u}}.
\end{equation}
Then, the desired result follows.
\end{proof}

\begin{remark} In the above proof, we used the minor matrix $Y^{(\mathfrak{m})}$, which, in distribution, is the same as the minor of $X$ with columns whose entries are all bounded by $N^{\frac{1}{\alpha}-\tilde{\eps}_{N}}$. These columns have small lengths.
 Here, we used the operator norm of $Y^{(\mathfrak{m})}$, i.e., the largest singular value, to provide an upper bound for $s_{\min}(X)$ for simplicity. Notice that, by Cauchy interlacing, one can actually use the smallest singular value of $Y^{(\mathfrak{m})}$ to serve as an upper bound. A further natural question is whether the smallest singular value of $Y^{(\mathfrak{m})}$ exactly matches $s_{\min}(X)$ in order, i.e., not only as an upper bound. We leave this as a further study.
\end{remark}

In the sequel, we also denote by $s_{n-k}(X)$ the $k$-th smallest singular value of $X$. In particular, $s_{\min}(X)=s_n(X)$. 

\begin{corollary}\label{cor: 729}
Suppose $\alpha\in(0,2)$.
Let $\mathfrak{b}\in(0,1/2)$ be a constant. Consider $k=O(n^{1-2\mathfrak{b}})$.
For any $K>0$, there exist constants $C_{\ref{cor: 729}}>0$ such that 
\begin{equation*}
	\prob\{s_{n-k}(X) \ge C_{\ref{cor: 729}}\cdot N^{\frac{1}{\alpha}}(\log N)^{\frac{\alpha-2}{2\alpha}}\}= O(N^{-K}).
\end{equation*}
In addition, the constant $C_{\ref{cor: 729}}$ may depend on $K$, $C_{u}$ and $\mathsf{a}$.
\end{corollary}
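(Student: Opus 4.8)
The plan is to deduce Corollary~\ref{cor: 729} from Theorem~\ref{thm: upper bound} by inserting more than one ``small column'' into the minor and invoking Cauchy interlacing to control several of the smallest singular values at once. Recall that in the proof of Theorem~\ref{thm: upper bound} we wrote $X \overset{d}{=} \Psi \circ Y + (\mathbf{1}_{N,n}-\Psi)\circ Z$, and we let $\mathfrak{m}$ be the number of all-one columns of $\Psi$ and $Y^{(\mathfrak{m})}$ the corresponding column minor of $Y$. The key observation is that if $Y^{(\mathfrak{m})}$ has at least $k+1$ columns, then by Cauchy interlacing the $(k+1)$-st smallest singular value of $X$ is at most $s_{\min}$ of any $(k+1)$-column submatrix of $Y^{(\mathfrak{m})}$, hence at most $\lVert Y^{(\mathfrak{m}),(k+1)} \rVert$ for a $(k+1)$-column sub-minor; more simply, $s_{n-k}(X) \le \lVert Y^{(\mathfrak{m})} \rVert$ whenever $\mathfrak{m} \ge k+1$, since deleting the $n-\mathfrak{m}$ non-selected columns of $X$ can only push up the smallest $\min(\mathfrak{m},\cdot)$ singular values, and then restricting further to $k+1$ of the $\mathfrak{m}$ columns bounds $s_{n-k}$ above by the operator norm of that $(k+1)$-column block, which is in turn $\le \lVert Y^{(\mathfrak{m})}\rVert$.

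Concretely, the steps are as follows. First, fix $\tilde{\eps}_N$ through \eqref{eq: 721} exactly as in Theorem~\ref{thm: upper bound}, so that $N^{\alpha\tilde{\eps}_N} = \mathfrak{b}\log N/(\mathfrak{a} C_u)$; since $\mathfrak{b}<1/2$ we have in particular $N^{\alpha\tilde{\eps}_N} \le (\log N)/(2\mathfrak{a} C_u)$. Second, show that $\mathfrak{m} \ge k+1$ with probability $1 - O(N^{-K})$: the number of all-one columns $\mathfrak{m}$ is a sum of $n$ independent Bernoulli variables each with success probability at least $\exp(-\mathfrak{a} C_u N^{\alpha\tilde{\eps}_N}) = N^{-\mathfrak{b}}$, so $\E\mathfrak{m} \ge n N^{-\mathfrak{b}} \gg n^{1-2\mathfrak{b}} \gtrsim k$; a standard Chernoff bound for binomial lower tails then gives $\prob\{\mathfrak{m} \le k+1\} \le \prob\{\mathfrak{m} \le \tfrac12 \E\mathfrak{m}\} \le \exp(-c\, n N^{-\mathfrak{b}})$, which is super-polynomially small and in particular $O(N^{-K})$. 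Third, on the event $\{\mathfrak{m}\ge k+1\}$, apply Cauchy interlacing to obtain $s_{n-k}(X) \le \lVert Y^{(\mathfrak{m})}\rVert$. Fourth, bound $\lVert Y^{(\mathfrak{m})}\rVert$ by $C N^{\frac{1}{\alpha} - \alpha\tilde{\eps}_N \frac{2-\alpha}{2\alpha}}$ with probability $1 - O(N^{-K})$, exactly as in the proof of Theorem~\ref{thm: upper bound} via Proposition~\ref{prop: Seginer} (Seginer's bound) applied conditionally on the column locations — here one conditions on $\Psi$ having a fixed set of all-one columns and uses the independence of $Y$ from $\Psi$, together with the fact that the conditional law of the entries of $Y^{(\mathfrak{m})}$ coincides with that of entries of $X$ truncated at $N^{1/\alpha - \tilde{\eps}_N}$, whose second moments are $\asymp N^{\frac{2}{\alpha} - \alpha\tilde{\eps}_N \frac{2-\alpha}{\alpha}}$ after the scaling. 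Fifth, plug in \eqref{eq: 721} to rewrite $N^{\frac{1}{\alpha} - \alpha\tilde{\eps}_N \frac{2-\alpha}{2\alpha}} = N^{1/\alpha}(\log N)^{\frac{\alpha - 2}{2\alpha}}$ up to a constant depending on $\mathfrak{b}, \mathfrak{a}, C_u$, which yields the stated bound with $C_{\ref{cor: 729}}$ depending on $K, C_u, \mathsf{a}$ (and $\mathfrak{b}$, which is a fixed constant).

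The only real point requiring care — and the place where the restriction $\mathfrak{b} < 1/2$ and $k = O(n^{1-2\mathfrak{b}})$ enters — is the second step: we need enough all-one columns, and the number of them concentrates around $n N^{-\mathfrak{b}} \sim n\, (\mathfrak{a} C_u/(\mathfrak{b}\log N))$, which indeed dominates $n^{1-2\mathfrak{b}}$ for large $n$ (here one uses that $n^{-2\mathfrak{b}}$ decays polynomially whereas $N^{-\mathfrak{b}}$ decays only by a logarithmic factor — wait, more precisely $N^{-\mathfrak{b}}$ with $N^{\alpha\tilde\eps_N}$ as in \eqref{eq: 721} gives $e^{-\mathfrak{a} C_u N^{\alpha \tilde\eps_N}} = N^{-\mathfrak{b}}$, and $nN^{-\mathfrak b} \gg n^{1-2\mathfrak b}$ iff $N^{-\mathfrak b} \gg n^{-2\mathfrak b}$, i.e. iff $n^{\mathfrak b} \gg 1$, which holds). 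So the Chernoff estimate is comfortable. Everything else is a direct transcription of the argument already given for $k=0$; I do not anticipate any genuine obstacle, only the bookkeeping of constants and the need to state the conditional version of Seginer's bound cleanly. One should also remark, as after Theorem~\ref{thm: upper bound}, that using the smallest singular value of $Y^{(\mathfrak{m})}$ rather than its operator norm would give the same order and the corollary is not claimed to be sharp.
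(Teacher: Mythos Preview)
Your proposal is correct and follows essentially the same approach as the paper: both arguments show that $\mathfrak{m}\ge k+1$ with overwhelming probability (you via Chernoff, the paper via Hoeffding) and then deduce $s_{n-k}(X)\le\lVert Y^{(\mathfrak{m})}\rVert$ before bounding the latter by Seginer as in Theorem~\ref{thm: upper bound}. The only cosmetic difference is that the paper phrases the key inequality via the min--max characterization of $s_{n-k}$ together with a dimension-count intersection argument, while you invoke Cauchy interlacing directly; these are equivalent formulations of the same fact.
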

\begin{proof}
For a constant $\mathfrak{a}>1$, set $\tilde{\eps}_{N}$ as in \eqref{eq: 721}.
Let $\mathfrak{m}$ be as in the proof of Theorem \ref{thm: upper bound}.
By Hoeffding's inequality, we have $\mathfrak{m} > n^{1-\mathfrak{b}}/2$ with probability at least $1-\exp(-n^{1-2\mathfrak{b}}/2)$.
According to the min-max theorem, it is known that
\begin{equation*}
	s_{n-k} = \max_{\textnormal{dim}(S)=n-k}\min_{\substack{v\in S\\ \lVert v\rVert_{2}=1}}\lVert Xv\rVert_2.
\end{equation*}
Note that $(n-k)+\mathfrak{m}>n$ on the event $\{\mathfrak{m} > n^{1-\mathfrak{b}}/2\}$. Define the subset $\mathcal{J}\subset[n]$ by
\begin{equation*}
	\mathcal{J} = \{j\in[n]: \Psi_{j}=\mathbf{1}_{N}\}.
\end{equation*}
By definition, we observe that $\mathfrak{m}=|\mathcal{J}|$. Conditioning on $\Psi$ such that $\mathfrak{m} > n^{1-\mathfrak{b}}/2$ holds, for any subspace $S$ of $\mathbb{R}^{n}$ with $\textnormal{dim}(S)=n-k$, since $(n-k)+\mathfrak{m}>n$, we find that
\begin{equation*}
	S\cap\,\textnormal{span}\{e_{j}: j\in\mathcal{J}\}\neq\emptyset,
\end{equation*}
which implies
\begin{equation*}
	\min_{\substack{v\in S\\ \lVert v\rVert_{2}=1}} \lVert (\Psi \circ Y + (\mathbf{1}_{N,n}-\Psi)\circ Z)v \rVert_2 \le \lVert Y^{(\mathfrak{m})} \rVert.
\end{equation*}
Now we can complete the proof by following the argument of Theorem \ref{thm: upper bound}.
\end{proof}

\subsection{Proofs of Theorem \ref{thm: ssv} and Theorem \ref{thm: 220}}

We shall prove the localization result combining Proposition \ref{prop: 240} and Theorem \ref{thm: upper bound}.

\begin{proof}[Proof of Theorem \ref{thm: ssv}]
Let ${c}$ be as in Proposition \ref{prop: 240}.
Consider a constant $c'>0$.
Set $N^{\alpha\eps_{N}}={c'}\log{N}$ and $c_{\ref{thm: ssv}}={c}^{2}\cdot {c'}\cdot \mathsf{a}^{-1}$.
Then, by Definition \ref{def: 174}, it is enough to show that
\begin{equation}\label{eq: 803}
	\prob\{\mathfrak{u}\in\mathcal{S}_{0}\} = O(N^{-K}).
\end{equation}
Fix $C>C_{\ref{thm: upper bound}}$. Let $c_{\ref{prop: 240}}$ be as in Proposition \ref{prop: 240}.
Choose ${c'}>0$ sufficiently small such that $c_{\ref{prop: 240}}\cdot ({c'})^{\frac{\alpha-2}{2\alpha}}\ge C$.
Applying Proposition \ref{prop: 240} with $N^{\alpha\eps_{N}}={c'}\log{N}$, we have
\begin{equation}
	\prob\Big\{ \inf_{v\in\mathcal{S}_{0}} \lVert X v \rVert_2 \le C N^{\frac{1}{\alpha}}(\log N)^{\frac{\alpha-2}{2\alpha}} \Big\} = O(N^{-K}). \label{081401}
\end{equation}
Using the fact that $s_{\min}(X)=\lVert X \mathfrak{u} \rVert_2$,
we also find that the event
\begin{equation*}
	\{\mathfrak{u}\in\mathcal{S}_{0}\}\cap \Big\{ \inf_{v\in\mathcal{S}_{0}} \lVert X v \rVert_2 > C N^{\frac{1}{\alpha}}(\log N)^{\frac{\alpha-2}{2\alpha}} \Big\}
\end{equation*}
implies
\begin{equation*}
	s_{\min}(X) > C N^{\frac{1}{\alpha}}(\log N)^{\frac{\alpha-2}{2\alpha}}.
\end{equation*}
Recalling that $C>C_{\ref{thm: upper bound}}$, we notice that
\begin{equation*}
	\{\mathfrak{u}\in\mathcal{S}_{0}\}\cap \Big\{ \inf_{v\in\mathcal{S}_{0}} \lVert X v \rVert_2 > C N^{\frac{1}{\alpha}}(\log N)^{\frac{\alpha-2}{2\alpha}} \Big\}
	\cap \{ s_{\min}(X) < C_{\ref{thm: upper bound}} N^{\frac{1}{\alpha}}(\log N)^{\frac{\alpha-2}{2\alpha}} \}
	= \emptyset.
\end{equation*}
Note that
\begin{equation*}
	\prob\{\mathfrak{u}\in\mathcal{S}_{0}\} \le \prob\Big\{ \inf_{v\in\mathcal{S}_{0}} \lVert X v \rVert_2 \le C N^{\frac{1}{\alpha}}(\log N)^{\frac{\alpha-2}{2\alpha}} \Big\}
	+ \prob\{ s_{\min}(X) \ge C_{\ref{thm: upper bound}} N^{\frac{1}{\alpha}}(\log N)^{\frac{\alpha-2}{2\alpha}} \}.
\end{equation*}
We can complete the proof of  \eqref{eq: 803} by combining equation (\ref{081401}) with Theorem \ref{thm: upper bound}.
\end{proof}

\begin{remark}[Localization length]
Consider the size of the set $\hat{I}=\hat{I}(\mathfrak{u})$, which is defined in Theorem \ref{thm: ssv}. By Markov's inequality, we have
\begin{equation}\label{eq: 885}
	|\hat{I}(\mathfrak{u})| \le \frac{n}{c_{\ref{thm: ssv}}\log{n}}.
\end{equation}
Recall that we set $c_{\ref{thm: ssv}}={c}^{2}\cdot {c'}$ in the proof. The constants ${c}$ and ${c'}$ satisfy
\eqref{eq: 535} and $c_{\ref{prop: 240}}\cdot ({c'})^{\frac{\alpha-2}{2\alpha}}\ge C$ with $C>C_{\ref{thm: upper bound}}$.
Thus we get
\begin{equation*}
	{c'} \le (c_{\ref{prop: 240}}/C_{\ref{thm: upper bound}})^{\frac{2\alpha}{2-\alpha}},
\end{equation*}
which implies the constant $c_{\ref{thm: ssv}}$ should be very small when we consider $\alpha$ close to $2$.
This may indicate that the vector becomes more delocalized as $\alpha$ approaches $2$, which aligns with our intuition. Moreover, for each $\alpha\in(0,2)$, we may find $c_{\ref{thm: ssv}}$ such that $c_{\ref{thm: ssv}}$ is decreasing in $\alpha$.
\end{remark}

\begin{proof}[Proof of Theorem \ref{thm: 220}]
This can be regarded as an extension to the $k$-th smallest singular value $s_{n-k}(X)$.
The proof is similar.
Let $\mathfrak{u}^{(k)}=(\mathfrak{u}^{(k)}_{1},\mathfrak{u}^{(k)}_{2},\cdots,\mathfrak{u}^{(k)}_{n})$ be  unit.
Since we have $s_{n-k}=\lVert X\mathfrak{u}^{(k)} \rVert_2$, the event
\begin{equation*}
	\{\mathfrak{u}^{(k)}\in\mathcal{S}_{0}\}\cap \Big\{ \inf_{v\in\mathcal{S}_{0}} \lVert X v \rVert_2 > C N^{\frac{1}{\alpha}}(\log N)^{\frac{\alpha-2}{2\alpha}} \Big\}
\end{equation*}
implies
\begin{equation*}
	s_{n-k}(X) > C N^{\frac{1}{\alpha}}(\log N)^{\frac{\alpha-2}{2\alpha}}.
\end{equation*}
Let $C_{\ref{cor: 729}}$ be as in Corollary \ref{cor: 729}.
Choose $C>C_{\ref{cor: 729}}$. By Corollary \ref{cor: 729}, it follows that
\begin{equation*}
	\prob\{s_{n-k} \le C_{\ref{cor: 729}} N^{\frac{1}{\alpha}}(\log N)^{\frac{\alpha-2}{2\alpha}}\} = 1 - O(N^{-K}).
\end{equation*}
The remaining argument is almost the same as the proof of Theorem \ref{thm: ssv}, and thus we omit it.
\end{proof}

\section{Delocalization for $\alpha>2$}\label{sec: 958} 

In this section, we shall prove Theorem \ref{thm: 133} for the case $\alpha>2$.
As in the previous section, for simplicity, let us write $N = \mathsf{a} \cdot n$ and use $N$ and $n$ interchangeably. This adjustment does not affect the proof.
Before starting the proof, we introduce the following two propositions.

\begin{proposition}\label{prop: 709}
For $\alpha>2$, there exist constants $c_{\ref{prop: 709}},c'_{\ref{prop: 709}}\in(0,1/2)$ only depending on $\alpha$ such that the following holds.
Define the subset $\mathcal{D}\subseteq [N]\times [n]$ through 
\begin{equation*}
	\mathcal{D} = \{(i,j)\in [N]\times [n]: |x_{ij}| > N^{\frac{1}{2}-c_{\ref{prop: 709}}}\}.
\end{equation*}
Then we have for large $n$
\begin{equation*}
	\prob\{ |\mathcal{D}| > N^{1-c'_{\ref{prop: 709}}} \} \le N^{- \frac{(\alpha-2)}{4} N^{1-c'_{\ref{prop: 709}}}}.
\end{equation*}
\end{proposition}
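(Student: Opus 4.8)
The plan is to bound $|\mathcal{D}|$ by a direct union bound over cardinalities, exploiting that $|\mathcal{D}|$ is a sum of i.i.d.\ Bernoulli random variables with a tiny success probability. First I would observe that, writing $p = \prob\{|x_{11}| > N^{\frac12 - c_{\ref{prop: 709}}}\}$, the tail bound \eqref{eq: 81} gives $p \le C_u N^{-\alpha(\frac12 - c_{\ref{prop: 709}})}$ for $n$ large (so that $N^{\frac12 - c_{\ref{prop: 709}}} \ge T_0$). Since $|\mathcal{D}| = \sum_{(i,j)} \indic\{|x_{ij}| > N^{\frac12 - c_{\ref{prop: 709}}}\} \sim \mathrm{Binomial}(Nn, p)$ with $Nn \asymp N^2$, we have $\E|\mathcal{D}| \le C_u N^2 \cdot N^{-\alpha(\frac12 - c_{\ref{prop: 709}})} = C_u N^{2 - \frac{\alpha}{2} + \alpha c_{\ref{prop: 709}}}$. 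Because $\alpha > 2$, the exponent $2 - \frac{\alpha}{2} < 1$, so for $c_{\ref{prop: 709}}$ chosen small enough (depending only on $\alpha$), this exponent is strictly less than $1$; concretely one can take, say, $c_{\ref{prop: 709}} = \frac{\alpha - 2}{4\alpha}$ so that $2 - \frac{\alpha}{2} + \alpha c_{\ref{prop: 709}} = \frac32 - \frac{\alpha}{4} < 1$ when $\alpha > 2$, and then any $c'_{\ref{prop: 709}} \in (0, \frac{\alpha-2}{4})$ works after small adjustments.

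Next I would run the cardinality union bound: for any integer $m$,
\begin{equation*}
	\prob\{|\mathcal{D}| \ge m\} \le \binom{Nn}{m} p^m \le \Big(\frac{e Nn \, p}{m}\Big)^m \le \Big(\frac{e C_u N^{2 - \alpha(\frac12 - c_{\ref{prop: 709}})}}{m}\Big)^m.
\end{equation*}
Setting $m = \lceil N^{1 - c'_{\ref{prop: 709}}} \rceil$, the ratio inside the parentheses is at most $e C_u N^{2 - \frac{\alpha}{2} + \alpha c_{\ref{prop: 709}} - (1 - c'_{\ref{prop: 709}})}$, and with the above choices this is $N^{-\gamma}$ for some $\gamma > 0$ bounded below in terms of $\alpha$ (after absorbing the constant $e C_u$ into a slightly smaller exponent for $n$ large). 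Raising to the power $m = N^{1 - c'_{\ref{prop: 709}}}$ then yields a bound of the form $N^{-\gamma N^{1 - c'_{\ref{prop: 709}}}}$, and tuning the constants so that $\gamma \ge \frac{\alpha - 2}{4}$ gives exactly the stated bound $N^{-\frac{\alpha-2}{4} N^{1 - c'_{\ref{prop: 709}}}}$. The arithmetic of matching the constant $\frac{\alpha-2}{4}$ precisely is just bookkeeping — one has freedom to shrink both $c_{\ref{prop: 709}}$ and $c'_{\ref{prop: 709}}$ as needed.

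There is no serious obstacle here; the statement is essentially a Chernoff/union bound for a very sparse binomial, and the only thing to be careful about is the ordering of the constant choices: $c_{\ref{prop: 709}}$ must be fixed first (small enough that the mean exponent $2 - \frac{\alpha}{2} + \alpha c_{\ref{prop: 709}}$ stays below $1$ with a definite gap), and then $c'_{\ref{prop: 709}}$ chosen small enough that $1 - c'_{\ref{prop: 709}}$ still exceeds that exponent by at least $\frac{\alpha-2}{4}$, with a little extra room to absorb the $e C_u$ factor for large $n$. Both constants depend only on $\alpha$ (and on the fixed constants $C_u, T_0$ from Definition~\ref{def: 78}, which are themselves not part of the asymptotic regime), as required. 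I would present this as a short half-page argument.
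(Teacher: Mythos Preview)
Your proposal is correct and follows essentially the same argument as the paper: bound $p=\prob\{|x_{11}|>N^{1/2-c_{\ref{prop: 709}}}\}$ via \eqref{eq: 81}, then control the binomial tail by $\binom{Nn}{m}p^{m}\le (eNnp/m)^{m}$ and choose $c_{\ref{prop: 709}},c'_{\ref{prop: 709}}$ small in terms of $\alpha$ so that the per-term exponent is at most $-(\alpha-2)/4$. The only cosmetic difference is that the paper sums the bound over all $k>N^{1-c'_{\ref{prop: 709}}}$ while you use the single dominant term; your explicit choice $c_{\ref{prop: 709}}=(\alpha-2)/(4\alpha)$ just barely misses leaving room for $c'_{\ref{prop: 709}}>0$ and the $eC_u$ factor, but as you note this is pure bookkeeping fixed by shrinking $c_{\ref{prop: 709}}$ slightly.
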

\begin{proof}
Since $\prob\{|x_{ij}|>N^{\frac{1}{2}-c_{\ref{prop: 709}}}\}\le C_{u}N^{-\frac{\alpha}{2}+\alpha c_{\ref{prop: 709}}}$,
we have
\begin{align}\label{eq: 1227}
	\prob\{ |\mathcal{D}| > N^{1-c'_{\ref{prop: 709}}} \}
	&\le \sum_{ k > N^{1-c'_{\ref{prop: 709}}} } {Nn \choose k}(C_{u}N^{-\frac{\alpha}{2}+\alpha c_{\ref{prop: 709}}})^{k} \nonumber\\
	&\le \sum_{ k > N^{1-c'_{\ref{prop: 709}}} } \Big(\frac{eNn}{k}\Big)^{k}(C_{u}N^{-\frac{\alpha}{2}+\alpha c_{\ref{prop: 709}}})^{k}.
\end{align}
We observe that each term in \eqref{eq: 1227} is bounded by
\begin{equation*}
	N^{-k(\frac{\alpha-2}{2}+c'_{\ref{prop: 709}}+\alpha c_{\ref{prop: 709}})},
\end{equation*}
for $N$ sufficiently large.
Since $\alpha>2$, we can obtain the desired result by choosing $c_{\ref{prop: 709}},c'_{\ref{prop: 709}}>0$ small enough.
\end{proof}

\begin{proposition}\label{prop: 716}
Suppose $\alpha>2$. 
Let $c_{\ref{prop: 709}}$ be as in Proposition \ref{prop: 709}.
Without loss of generality, we assume that $c_{\ref{prop: 709}}>0$ is small enough so that $N^{-1+2c_{\ref{prop: 709}}}\gg \max\{N^{-\frac{\alpha}{2}+\alpha c_{\ref{prop: 709}}}, N^{-2+4c_{\ref{prop: 709}}}\log N\}$. 
Define the (random) subset $J\subset [n]$ by
\begin{equation*}
	J = \{ j\in[n]: \textnormal{$|x_{ij}|\le N^{\frac{1}{2}-c_{\ref{prop: 709}}}$ for all $i$} \}.
\end{equation*}
We denote by $X_{J}$ the minor of $X$ which consists of the columns indexed by $J$. 
For every constant $K>0$, 
there exists a constant $C_{\ref{prop: 716}}=C_{\ref{prop: 716}}(K)>0$ such that
\begin{equation*}
	\prob\{ \lVert X_{J} \rVert > C_{\ref{prop: 716}}\sqrt{N} \} = O(N^{-K}).
\end{equation*}
\end{proposition}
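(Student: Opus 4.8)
The plan is to dominate $\lVert X_{J}\rVert$ by the operator norm of a \emph{deterministic} truncation of $X$, and then estimate that norm by the moment method, in close parallel with the proof of Lemma \ref{lem: 212}.

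First I would eliminate the dependence on the random set $J$. Put $B_{N}=N^{\frac12-c_{\ref{prop: 709}}}$ and define $\check{X}=(\check{x}_{ij})$ by $\check{x}_{ij}=x_{ij}\indic\{|x_{ij}|\le B_{N}\}$. By the very definition of $J$, for every $j\in J$ all entries of the $j$-th column of $X$ are bounded by $B_{N}$, so that column coincides with the corresponding column of $\check{X}$; hence $X_{J}$ is, as a random matrix, always a column-submatrix of $\check{X}$. Since deleting columns cannot increase the operator norm, $\lVert X_{J}\rVert\le\lVert\check{X}\rVert$ surely, so it suffices to prove $\prob\{\lVert\check{X}\rVert>C_{\ref{prop: 716}}\sqrt{N}\}=O(N^{-K})$, a statement from which $J$ has disappeared.

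Next observe that the entries $\check{x}_{ij}$ are i.i.d., are bounded by $B_{N}$, are mean zero by Assumption \ref{assump: 89}, and satisfy $\E\check{x}_{11}^{2}\le\E x_{11}^{2}=:\sigma^{2}<\infty$ (finite since $\alpha>2$). I would then rerun the computation in the proof of Lemma \ref{lem: 212} almost verbatim. For a fixed row, with $S_{i}=\sum_{j}\check{x}_{ij}^{2}$ one has $\E S_{i}\le\sigma^{2}n$, while Rosenthal's inequality controls $\E|S_{i}-\E S_{i}|^{q/2}$ through $q^{q/2}\sum_{j}\E|\check{x}_{ij}|^{q}$ and $q^{q/4}\big(\sum_{j}\var(\check{x}_{ij}^{2})\big)^{q/4}$, with an identical treatment of the columns. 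Using $\E|\check{x}_{11}|^{q}\lesssim B_{N}^{\,q-\alpha}$ (valid once $q>\alpha$) and $\E\check{x}_{11}^{4}\lesssim\max\{1,B_{N}^{4-\alpha}\}$, Proposition \ref{prop: Seginer} then yields a bound of the form
\begin{equation*}
 \E\lVert\check{X}\rVert^{q}\le C^{q}N\Big(n^{q/2}+q^{q/2}\,n\,B_{N}^{\,q-\alpha}+q^{q/4}\big(n\max\{1,B_{N}^{4-\alpha}\}\big)^{q/4}\Big).
\end{equation*}

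The only point requiring care — and the place where the hypothesis $c_{\ref{prop: 709}}<\tfrac12$ (equivalently $N^{-1+2c_{\ref{prop: 709}}}\gg N^{-\alpha/2+\alpha c_{\ref{prop: 709}}}$) enters — is to check that the two correction terms are negligible against $n^{q/2}$ for the relevant choice of $q$. Taking $q=q(K)$ to be a sufficiently large multiple of $\log N$, one has $q^{q/2}=e^{\frac q2\log q}=N^{O(\log\log N)}$, whereas $B_{N}^{\,q}N^{-q/2}=N^{-c_{\ref{prop: 709}}q}=e^{-\Theta(\log^{2}N)}$ since $q\asymp\log N$; because $\log^{2}N$ dominates $\log N\log\log N$, each correction term is $o(n^{q/2})$ (subcriticality $B_{N}\ll\sqrt N$ of the truncation level is what makes this work, and the inequality $(\tfrac12-c_{\ref{prop: 709}})(4-\alpha)-1<0$ disposes of the fourth-moment term when $2<\alpha<4$). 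Hence $\E\lVert\check{X}\rVert^{q}\le 2C^{q}N\,n^{q/2}$ for large $N$, and Markov's inequality with this $q$, together with $C_{\ref{prop: 716}}$ chosen large enough depending on $K$, gives $\prob\{\lVert\check{X}\rVert>C_{\ref{prop: 716}}\sqrt N\}\le 2N\,(C''/C_{\ref{prop: 716}})^{q}\le N^{-K}$ for a suitable constant $C''$. I expect this last bookkeeping to be the only genuine (though routine) obstacle; the reduction to $\check{X}$ is immediate and is the key structural observation.
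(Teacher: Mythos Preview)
Your argument is correct and is genuinely different from the paper's. You reduce to the deterministic truncation $\check{X}$ via the surely-valid domination $\lVert X_{J}\rVert\le\lVert\check{X}\rVert$ and then recycle the Seginer--Rosenthal--Markov moment computation of Lemma~\ref{lem: 212}. The paper instead performs a Bernoulli resampling $X\overset{d}{=}\Phi\circ A+(\mathbf{1}-\Phi)\circ B$, identifies $X_{J}$ in law with a column minor of the bounded-support matrix $A$, invokes an external operator-norm bound for bounded-support matrices (\cite[Lemma~3.11]{DY1}) conditionally on $\Phi$, and then averages over $\Phi$ using Proposition~\ref{prop: 709}. Your route is more self-contained (no external lemma, no conditioning on $\Phi$, no need for Proposition~\ref{prop: 709} here), at the price of a $K$-dependent constant $C_{\ref{prop: 716}}(K)$ rather than the explicit $3$ that the paper gets --- but the statement only asks for a $K$-dependent constant anyway.

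Two small points of bookkeeping. First, Proposition~\ref{prop: Seginer} only applies for $q\le 2\log\max\{N,n\}=2\log N$, so you cannot take $q$ to be an arbitrarily large multiple of $\log N$; rather, fix $q=2\log N$ and absorb the $K$-dependence entirely into $C_{\ref{prop: 716}}$ (which you already do). With $q=2\log N$ one still has $B_{N}^{q}N^{-q/2}=N^{-c_{\ref{prop: 709}}q}=e^{-2c_{\ref{prop: 709}}(\log N)^{2}}$ beating the $q^{q/2}=N^{O(\log\log N)}$ factor, so the correction terms remain negligible and the Markov step gives $2N(C''/C_{\ref{prop: 716}})^{2\log N}\le N^{-K}$ for $C_{\ref{prop: 716}}\ge C''e^{(K+1)/2}$. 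Second, your identification of where the hypothesis on $c_{\ref{prop: 709}}$ enters is slightly off: what your moment computation actually uses is $c_{\ref{prop: 709}}>0$ (subcriticality $B_{N}\ll\sqrt{N}$), not $c_{\ref{prop: 709}}<1/2$; the stated inequality $N^{-1+2c_{\ref{prop: 709}}}\gg\max\{\cdots\}$ is not essentially used in your argument.
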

\begin{proof}

Similarly to \eqref{eq: 863} in the proof of Theorem \ref{thm: upper bound},
we let $\Phi=(\phi_{ij})$ be an $N\times n$ random matrix such that all $\phi_{ij}$'s are independent Bernoulli random variables defined by
\begin{equation*}
	\phi_{ij} =
	\begin{cases}
		1, & \text{with probability $\prob\{|x_{ij}|\le N^{\frac{1}{2}-c_{\ref{prop: 709}}}\}$},\\
		0, & \text{with probability $\prob\{|x_{ij}|> N^{\frac{1}{2}-c_{\ref{prop: 709}}}\}$}.
	\end{cases}
\end{equation*}
For each $(i,j)\in\{1,2,\cdots,N\}\times\{1,2,\cdots,n\}$, let $a_{ij}$ and $b_{ij}$ be independent random variables such that
\begin{align*}
	\prob\{a_{ij}\in \tilde{I}\} &= \frac{ \prob\{x_{ij}\in[-N^{\frac{1}{2}-c_{\ref{prop: 709}}},N^{\frac{1}{2}-c_{\ref{prop: 709}}}]\cap \tilde{I}\} }{ \prob\{|x_{ij}|\le N^{\frac{1}{2}-c_{\ref{prop: 709}}}\} },\\
	\prob\{b_{ij}\in \tilde{I}\} &= \frac{ \prob\{x_{ij}\in((-\infty,-N^{\frac{1}{2}-c_{\ref{prop: 709}}})\cup(N^{\frac{1}{2}-c_{\ref{prop: 709}}},\infty))\cap \tilde{I}\} }{ \prob\{|x_{ij}|> N^{\frac{1}{2}-c_{\ref{prop: 709}}}\} },
\end{align*}
for every interval $\tilde{I}\subset\mathbb{R}$.
Furthermore, we assume that ${\phi_{ij}}$, ${a_{ij}}$, and ${b_{ij}}$ are independent. Then,  $\{x_{ij}\}$ has the same law with $\{a_{ij}\phi_{ij} + b_{ij}(1-\phi_{ij})\}$, i.e.,
\begin{equation*}
	X \overset{d}{=} \Phi \circ A + (\mathbf{1}_{N,n}-\Phi) \circ B,
\end{equation*}
where $A=(a_{ij})$, $B=(b_{ij})$ and $\mathbf{1}_{N,n}\in\mathbb{R}^{N\times n}$ is the all-one matrix.
 Let $\widetilde{\mathcal{D}}_{c} \equiv \widetilde{\mathcal{D}}_{c}(\Phi) = \{  j \in [n] : \sum\nolimits_{i \in [N]} \phi_{ij} < N\}$, and denote by  $Y^{(\widetilde{\mathcal{D}}_{c} )}$ the minor of $Y$ obtained by removing all columns indexed by $\widetilde{\mathcal{D}}_{c}$. Then we have $X_J \overset{d}{=} (\Phi \circ A + (\mathbf{1}_{N,n}-\Phi) \circ B)^{(\widetilde{\mathcal{D}}_{c} )}= A^{(\widetilde{\mathcal{D}}_{c} )}$.  Note that  $N^{-1/2} A^{(\widetilde{\mathcal{D}}_{c} )}$ is the so-called random matrix with bounded support. Given any $\Phi$ which has at most $N^{1-c'_{\ref{prop: 709}}}$ entries equal to $0$, it follows from \cite[Lemma 3.11]{DY1} that for any $K > 0$ and sufficiently large $N$, 
\begin{align*}
	\prob_{\Phi}( \lVert A^{(\widetilde{\mathcal{D}}_{c} )} \lVert  \geq 3 \sqrt{N} ) = O(N^{-K}),
\end{align*}
where $\prob_{\Phi}(\cdot)$ represents the conditional probability given $\Phi = (\phi_{ij})$.
Using the law of total probability together with Proposition \ref{prop: 709}, we have with $ \Omega \equiv \{\mathcal{A} \in \{0,1\}^{N \times n}: \sum_{i \in [N],j\in [n]} \mathcal{A}_{ij} \geq nN - N^{1-c'_{\ref{prop: 709}}} \}$,
\begin{align*}
	\prob(\lVert A^{(\widetilde{\mathcal{D}}_{c} )} \lVert  \geq 3 \sqrt{N} ) &= \E \big\{ \prob_{\Phi}(\lVert A^{(\widetilde{\mathcal{D}}_{c} )} \lVert  \geq 3 \sqrt{N} )\cdot  \mathbf{1}_{\Phi \in  \Omega} \big\}\\
	&\quad + \E\big\{ \prob_{\Phi}(\lVert A^{(\widetilde{\mathcal{D}}_{c} )} \lVert  \geq 3 \sqrt{N} )\cdot  (1 -\mathbf{1}_{\Phi \in  \Omega})\big\} = O(N^{-K}),
\end{align*} 
which concludes the proof.
\end{proof}

\begin{proposition}\label{prop: 1029} Suppose $\alpha>2$. 
Fix $t>0$.
For any constant $K>0$, 
we have
\begin{equation*}
	\prob\{ N^{-1/2}s_{\min}(X) \ge 1-\sqrt{1/\mathsf{a}} + t \} = O(N^{-K}).
\end{equation*}
\end{proposition}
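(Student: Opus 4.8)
The plan is to establish the upper bound on $s_{\min}(X)$ by a truncation-and-interlacing argument, matching the flavor of the proofs of Theorem~\ref{thm: upper bound} and Proposition~\ref{prop: 716}. We start from the resampling decomposition $X \overset{d}{=} \Phi \circ A + (\mathbf{1}_{N,n} - \Phi) \circ B$ used in the proof of Proposition~\ref{prop: 716}, where $\Phi$ records the event $\{|x_{ij}| \le N^{1/2 - c_{\ref{prop: 709}}}\}$. By Proposition~\ref{prop: 709}, with overwhelming probability the number of ``large'' entries is at most $N^{1 - c'_{\ref{prop: 709}}} = o(n)$; hence the random column set $J = \widetilde{\mathcal{D}}_c^{\,c}$ of columns with \emph{all} entries truncated satisfies $|J| \ge n - N^{1 - c'_{\ref{prop: 709}}} = n - o(n)$ with high probability. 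By Cauchy interlacing, $s_{\min}(X) \le s_{\min}(X_J) = s_{\min}(A^{(\widetilde{\mathcal{D}}_c)})$, and $N^{-1/2} A^{(\widetilde{\mathcal{D}}_c)}$ is (conditionally on an admissible $\Phi$) a random matrix with bounded support and aspect ratio $|J|/N \to 1/\mathsf{a}$.

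The core step is then to invoke a quantitative upper-tail bound for the smallest singular value of such a bounded-support rectangular matrix. Since $A^{(\widetilde{\mathcal{D}}_c)}$ is $N \times |J|$ with $|J|/N \to \mathsf{a}^{-1}$ and entries of unit variance (after the truncation, which changes the variance only by $O(N^{-\alpha/2 + \alpha c_{\ref{prop: 709}}})$, negligible under the standing assumption $\mathbb{E}x_{11}^2 = 1$), the Marchenko--Pastur left edge for $N^{-1/2} A^{(\widetilde{\mathcal{D}}_c)}$ sits at $1 - \sqrt{|J|/N} \to 1 - \sqrt{1/\mathsf{a}}$. One then uses a rigidity / edge-universality input for random matrices with bounded support — precisely the kind of estimate cited from \cite{DY1} in the proof of Proposition~\ref{prop: 716}, e.g.\ the local Marchenko--Pastur law at the hard-or-soft left edge, or \cite[Corollary V.2.1]{FS10} type deviation inequalities as mentioned in the proof-strategy section — to conclude that $N^{-1/2} s_{\min}(A^{(\widetilde{\mathcal{D}}_c)}) \le 1 - \sqrt{1/\mathsf{a}} + t/2$ with probability $1 - O(N^{-K})$, for any fixed $t > 0$ and all large $N$. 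The small discrepancy between $\sqrt{|J|/N}$ and $\sqrt{1/\mathsf{a}}$, being $o(1)$, is absorbed into $t/2$.

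Finally I would assemble the pieces via the law of total probability exactly as at the end of the proof of Proposition~\ref{prop: 716}: split according to whether $\Phi$ lies in the admissible set $\Omega = \{\mathcal{A} \in \{0,1\}^{N \times n} : \sum_{i,j} \mathcal{A}_{ij} \ge nN - N^{1 - c'_{\ref{prop: 709}}}\}$; on $\Omega^c$ use Proposition~\ref{prop: 709} for an $O(N^{-K})$ (indeed superpolynomially small) bound, and on $\Omega$ use the conditional edge estimate together with interlacing. This yields $\prob\{N^{-1/2} s_{\min}(X) \ge 1 - \sqrt{1/\mathsf{a}} + t\} = O(N^{-K})$.

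The main obstacle is the conditional smallest-singular-value estimate for $A^{(\widetilde{\mathcal{D}}_c)}$: one needs a high-probability \emph{upper} bound on $s_{\min}$ with a genuinely polynomial error rate $O(N^{-K})$, uniform over all admissible $\Phi$. The operator-norm bound in Proposition~\ref{prop: 716} only controls the \emph{largest} singular value, so a separate input is required — either a local law at the soft left edge (which holds here since $\mathsf{a} > 1$ keeps the edge away from zero) giving eigenvalue rigidity near $(1 - \sqrt{1/\mathsf{a}})^2$, or a direct deviation inequality. A secondary technical point is verifying that the truncated entries $a_{ij}$ still satisfy the hypotheses of whichever bounded-support result is cited (mean zero by symmetry, variance $1 + o(1)$, bounded support $N^{1/2 - c_{\ref{prop: 709}}} = o(N^{1/2})$), and that the varying column count $|J|$ does not spoil uniformity — both of which are routine given the setup already in place.
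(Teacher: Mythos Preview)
Your proposal is correct and follows essentially the same route as the paper: the resampling decomposition from Proposition~\ref{prop: 716}, Cauchy interlacing to pass to the bounded-support minor $A^{(\widetilde{\mathcal{D}}_c)}$, a left-edge rigidity input for that minor, and then the law-of-total-probability assembly over admissible $\Phi$. The only substantive difference is the external input you name: the paper invokes \cite[Theorem~2.9]{HLS} (adapted to the left edge, with Tikhomirov's crude lower bound \cite{Tikhomirov16} on $s_{\min}$ as the a~priori input needed for the local law near the soft left edge) rather than \cite{DY1} or \cite{FS10}; note in particular that \cite[Corollary~V.2.1]{FS10} is a lower-tail bound and would not directly give the upper-tail estimate you need here.
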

\begin{proof} 
 Let $\Phi$ be defined as in the proof of Proposition \ref{prop: 716}. Recall $A,B$ and $\widetilde{\mathcal{D}}_{c}$ also.
Given any $\Phi$ which has at most $N^{1-c'_{\ref{prop: 709}}}$ entries equal to $0$, the Cauchy interlacing gives, for any fixed $t > 0$,
\begin{align*}
	\prob_{\Phi}(s_{\min}( \Phi \circ A + (\mathbf{1}_{N,n}-\Phi) \circ B) \ge 1-\sqrt{1/\mathsf{a}} + t   )\leq \prob_{\Phi}(s_{n - |\widetilde{\mathcal{D}}_{c} |}(  A^{(\widetilde{\mathcal{D}}_{c} )}) \geq 1-\sqrt{1/\mathsf{a}} + t ).
\end{align*} 
As mentioned above,   $N^{-1/2} A^{(\widetilde{\mathcal{D}}_{c} )}$ is the so-called random matrix with bounded support. Adapting the proof of \cite[Theorem 2.9]{HLS} from the right edge to the left edge with a crude lower bound of $s_{n - |\widetilde{\mathcal{D}}_{c} |}(  A^{(\widetilde{\mathcal{D}}_{c} )})$ given in \cite{Tikhomirov16}, we obtain
\begin{align*}
	\prob_{\Phi}(s_{n - |\widetilde{\mathcal{D}}_{c} |}(  A^{(\widetilde{\mathcal{D}}_{c} )}) \geq 1-\sqrt{1/\mathsf{a}} + t) = O(N^{-K}).
\end{align*}
The claim then follows by applying the law of total probability together with Proposition \ref{prop: 709}.
\end{proof}

\begin{proof}[Proof of Theorem \ref{thm: 133}]

Recall $\lVert X \mathfrak{u} \rVert_{2} = s_{\min}(X)$.
For $\eps\in(0,1/2)$, define the event 
\begin{equation*}
	\mathcal{E}_{\textnormal{bad}} = \{ \exists I\subset [n] : |I|=(1-\eps)n, \lVert \mathfrak{u}_{I} \rVert_{2} < \delta_{\mathsf{a}} \},
\end{equation*}
where $\delta_{\mathsf{a}}$ will be specified later.
Let $\mathcal{D}$ and $J$ be as in Proposition \ref{prop: 709} and Proposition \ref{prop: 716} respectively.
We also introduce the events $\mathcal{E}_{1}$, $\mathcal{E}_{2}$ and $\mathcal{E}_{3}$ as follows:
\begin{equation*}
	\mathcal{E}_{1}=\{|\mathcal{D}|\le N^{1-c'_{\ref{prop: 709}}}\},\quad
	\mathcal{E}_{2}=\{\lVert X_{J} \rVert \le C_{\ref{prop: 716}}\sqrt{N}\},\quad
	\mathcal{E}_{3}=\{N^{-1/2}s_{\min}(X) < 1-\sqrt{1/\mathsf{a}} + t\},
\end{equation*}
where we will choose $t>0$ later.
By Proposition \ref{prop: 709}, Proposition \ref{prop: 716} and Proposition \ref{prop: 1029}, for any constant $K>0$, we have
\begin{equation*}
	\prob(\mathcal{E}_{1}^{c}\cup\mathcal{E}_{2}^{c}\cup\mathcal{E}_{3}^{c}) = O(N^{-K}).
\end{equation*}
Consider the event $\mathcal{E}_{\textnormal{bad}} \cap \mathcal{E}_{1} \cap \mathcal{E}_{2} \cap \mathcal{E}_{3}$.
 Recall the notation $X_{L}$ as the minor of $X$ consisting of the columns indexed by $L\subset [n]$.
Since we have for any subset $I\subset [n]$
\begin{equation}\label{eq: 1035}
	\lVert X \mathfrak{u} \rVert_{2} = \lVert X_{I\cap J} \mathfrak{u}_{I\cap J} + X_{I^{c}\cup J^{c}} \mathfrak{u}_{I^{c}\cup J^{c}} \rVert_{2} \ge \lVert X_{I^{c}\cup J^{c}} \mathfrak{u}_{I^{c}\cup J^{c}} \rVert_{2} - \lVert X_{I\cap J} \mathfrak{u}_{I\cap J} \rVert_{2},
\end{equation}
it follows that
\begin{equation}\label{eq: 1039}
	\lVert X_{I^{c}\cup J^{c}} \mathfrak{u}_{I^{c}\cup J^{c}} \rVert_{2} \le s_{\min}(X) + \lVert X_{I\cap J} \mathfrak{u}_{I\cap J} \rVert_{2}.
\end{equation}

Now let us fix an index set $I\subset [n]$ satisfying $|I|=(1-\eps)n$.
If $\lVert \mathfrak{u}_{I} \rVert_{2} < \delta_{\mathsf{a}}$, we have
\begin{equation*}
	\lVert \mathfrak{u}_{I^{c}\cup J^{c}} \rVert_{2}^{2} = 1 - \lVert \mathfrak{u}_{I\cap J} \rVert_{2}^{2} > 1 - \delta_{\mathsf{a}}^{2}.
\end{equation*}
On the event $\{\lVert \mathfrak{u}_{I} \rVert_{2} < \delta_{\mathsf{a}}\}\cap\mathcal{E}_{2}\cap\mathcal{E}_{3}$, we get
\begin{equation}\label{eq: 1045}
	s_{\min}(X_{I^{c}\cup J^{c}}) \le \frac{1}{\sqrt{1-\delta_{\mathsf{a}}^{2}}} (1-\sqrt{1/\mathsf{a}}+ t + C_{\ref{prop: 716}}\delta_{\mathsf{a}})\sqrt{N}.
\end{equation}
Let $\mathcal{D}_{r}\subset [N]$ be the index set defined by
\begin{equation*}
	\mathcal{D}_{r} = \{i\in[N]: \text{$i$-th row of $X$ has an entry $x_{ij}$ satisfying $|x_{ij}|>N^{\frac{1}{2}-c_{\ref{prop: 709}}}$}\}.
\end{equation*}
Taking the minor $X_{I^{c}\cup J^{c}}^{[\mathcal{D}_{r}]}$ of $X_{I^{c}\cup J^{c}}$ by removing all rows indexed by $\mathcal{D}_{r}$, by the Cauchy interlacing, we observe that
\begin{equation*}
	s_{\min}(X_{I^{c}\cup J^{c}}^{[\mathcal{D}_{r}]}) \le s_{\min}(X_{I^{c}\cup J^{c}}).
\end{equation*}
Then the event $\mathcal{E}_{\textnormal{bad}}\cap\mathcal{E}_{2}\cap\mathcal{E}_{3}$ implies that
there exists a subset $I\subset [n]$ such that $|I|=(1-\eps) n$ and
\begin{equation*}
	s_{\min}(X_{I^{c}\cup J^{c}}^{[\mathcal{D}_{r}]}) \le \frac{1}{\sqrt{1-\delta_{\mathsf{a}}^{2}}} (1-\sqrt{1/\mathsf{a}}+ t + C_{\ref{prop: 716}}\delta_{\mathsf{a}})\sqrt{N}.
\end{equation*}
By the union bound, we obtain
\begin{multline}\label{eq: 1036}
	\prob(\mathcal{E}_{\textnormal{bad}} \cap\mathcal{E}_{1}\cap\mathcal{E}_{2}\cap\mathcal{E}_{3}) \\
	\le {n \choose (1-\eps) n} \cdot \max_{|I|=(1-\eps) n} \prob\bigg(\Big\{s_{\min}(X_{I^{c}\cup J^{c}}^{[\mathcal{D}_{r}]})\le\frac{(1-\sqrt{1/\mathsf{a}}+ t + C_{\ref{prop: 716}}\delta_{\mathsf{a}})\sqrt{N}}{\sqrt{1-\delta_{\mathsf{a}}^{2}}}\Big\}\cap\mathcal{E}_{1}\bigg).
\end{multline}
In addition, on the event $\mathcal{E}_{1}$, we can observe that for a subset $I\subset [n]$ with $|I|=(1-\eps) n$
\begin{equation*}
	N-N^{1-c'_{\ref{prop: 709}}}\le N-|\mathcal{D}_{r}| \le N \quad\text{and}\quad
	\eps n \le |I^{c}\cup J^{c}| \le \eps n + N^{1-c'_{\ref{prop: 709}}}.
\end{equation*}

Recall $\Phi,A$ and $\widetilde{\mathcal{D}}_{c}$ in the proof of Proposition \ref{prop: 716}. We further define the sets $\widetilde{\mathcal{D}}\equiv\widetilde{\mathcal{D}}(\Phi)$ and $\widetilde{\mathcal{D}}_{r}\equiv \widetilde{\mathcal{D}}_{r}(\Phi)$ by 
\begin{align*}
		\widetilde{\mathcal{D}} \equiv \{(i,j)\in [N]\times [n]: \phi_{ij} = 0\}, \quad
			\widetilde{\mathcal{D}}_{r} \equiv  \{ i\in [N]: \sum\nolimits_{j \in [n] } \phi_{ij} < n  \}.
\end{align*}
We denote by $A_{I^{c}\cup \widetilde{\mathcal{D}}_{c}}$ the minor of $A$ which consists of the columns indexed by $I^{c}\cup \widetilde{\mathcal{D}}_{c}$. We obtain $A_{I^{c}\cup \widetilde{\mathcal{D}}_{c}}^{[\widetilde{\mathcal{D}}_{r}]}$ from $A_{I^{c}\cup \widetilde{\mathcal{D}}_{c}}$ by removing all rows indexed by $\widetilde{\mathcal{D}}_{r}$.
Then we notice that
\begin{multline}
	\prob\bigg(\Big\{s_{\min}(X_{I^{c}\cup J^{c}}^{[\mathcal{D}_{r}]})\le\frac{(1-\sqrt{1/\mathsf{a}}+ t + C_{\ref{prop: 716}}\delta_{\mathsf{a}})\sqrt{N}}{\sqrt{1-\delta_{\mathsf{a}}^{2}}}\Big\}\cap\mathcal{E}_{1}\bigg) \\
	= \E\bigg[ \prob_{\Phi}\Big\{s_{\min}(A_{I^{c}\cup \widetilde{\mathcal{D}}_{c}}^{[\widetilde{\mathcal{D}}_{r}]})\le\frac{(1-\sqrt{1/\mathsf{a}}+ t + C_{\ref{prop: 716}}\delta_{\mathsf{a}})\sqrt{N}}{\sqrt{1-\delta_{\mathsf{a}}^{2}}}\Big\}
	\cdot\indic(|\widetilde{\mathcal{D}}|\le N^{1-c'_{\ref{prop: 709}}}) \bigg].
\end{multline}
Now conditioning on $\Phi$ satisfying $|\widetilde{\mathcal{D}}|\le N^{1-c'_{\ref{prop: 709}}}$, we shall bound
\begin{equation*}
	\prob_{\Phi}\Big\{s_{\min}(A_{I^{c}\cup \widetilde{\mathcal{D}}_{c}}^{[\widetilde{\mathcal{D}}_{r}]})\le
	\frac{(1-\sqrt{1/\mathsf{a}}+ t + C_{\ref{prop: 716}}\delta_{\mathsf{a}})\sqrt{N}}{\sqrt{1-\delta_{\mathsf{a}}^{2}}}\Big\}.
\end{equation*}
We choose $I_0$ to be any index set such that $I_0\subset I$ and $|I_0|=n/2$.
Recalling the identity \eqref{eq: 97} for the smallest singular value, it is immediate that
\begin{equation*}
	s_{\min}(A_{I_{0}^{c}\cup \widetilde{\mathcal{D}}_{c}}^{[\widetilde{\mathcal{D}}_{r}]}) \le s_{\min}(A_{I^{c}\cup \widetilde{\mathcal{D}}_{c}}^{[\widetilde{\mathcal{D}}_{r}]}).
\end{equation*}
Let us write $N'=N-|\widetilde{\mathcal{D}}_{r}|$ and $n'=|I_0^{c}\cup \widetilde{\mathcal{D}}_{c}|$. Notice that $n'=n/2(1+o(1))$ according to our choice of $I_0$. 
Set $t=C_{\ref{prop: 716}}\delta_{\mathsf{a}}$.
We can find small constants $\delta_{\mathsf{a}},\gamma_{\mathsf{a}}>0$ depending on $\mathsf{a}$ such that the following holds for large $n$ and $\eps\in(0,1/2)$:
\begin{equation*}
	\gamma_{\mathsf{a}} < \frac{1}{\E[a_{11}^{2}]^{1/2}\cdot\sqrt{N'}} \Big( \E[a_{11}^{2}]^{1/2}\cdot\big(1-\sqrt{n'/N'}\big)\sqrt{N'} - \frac{(1-\sqrt{1/\mathsf{a}}+ t + C_{\ref{prop: 716}}\delta_{\mathsf{a}})\sqrt{N}}{\sqrt{1-\delta_{\mathsf{a}}^{2}}} \Big).
\end{equation*}
This implies
\begin{multline}
    \prob_{\Phi}\Big\{s_{\min}(A_{I^{c}\cup \widetilde{\mathcal{D}}_{c}}^{[\widetilde{\mathcal{D}}_{r}]})\le
	\frac{(1-\sqrt{1/\mathsf{a}}+ t + C_{\ref{prop: 716}}\delta_{\mathsf{a}})\sqrt{N}}{\sqrt{1-\delta_{\mathsf{a}}^{2}}}\Big\} \\
	\le \prob_{\Phi}\Big\{s_{\min}(A_{I_{0}^{c}\cup \widetilde{\mathcal{D}}_{c}}^{[\widetilde{\mathcal{D}}_{r}]})\le
	\E[a_{11}^{2}]^{1/2}\cdot\Big(1-\sqrt{\frac{n'}{N'}}-\gamma_{\mathsf{a}}\Big)\sqrt{N'}\Big\}.
\end{multline}

From now on, we abuse the notation for brevity;
\begin{equation*}
	A=A_{I_{0}^{c}\cup \widetilde{\mathcal{D}}_{c}}^{[\widetilde{\mathcal{D}}_{r}]}.
\end{equation*}

Let $M>0$ be a (large) constant to be chosen later. Define $\widetilde{A}=(\tilde{a}_{ij})$ by $\tilde{a}_{ij}=a_{ij}\indic(|a_{ij}|\le M)$. Also write $\theta=a_{11}\indic(|a_{11}|> M)$.
Applying Theorem \ref{thm: 1301} together with $Y=(y_{ij})=\E[a_{11}^{2}]^{-1/2}A$,
for any $\eta>0$, we can find a constant $c(M,\eta)>0$ depending on $M$ and $\eta$ such that 
\begin{equation*}
	\prob\{ s_{\min}(Y) < s_{\min}(\widetilde{Y}) - \eta\sqrt{N'} - C\sqrt{N'\E\theta^{2}} \}\le \exp(-c(M,\eta)N'),
\end{equation*}
where 
$\widetilde{Y}=(\tilde{y}_{ij})$ is an $N\times n$ matrix with the entries $\tilde{y}_{ij}=y_{ij}\indic\{|y_{ij}|\le M\}-\E[y_{ij}\indic\{|y_{ij}|\le M\}]$, $\theta=y_{11}\indic\{|y_{11}|> M\}+\E[y_{11}\indic\{|y_{11}|\le M\}]$ and $C>0$ is a universal constant.
Consequently,
\begin{multline}
	\prob\Big\{s_{\min}(Y)\le \Big(1-\sqrt{\frac{n'}{N'}}-\gamma_{\mathsf{a}}\Big)\sqrt{N'} \Big\} \\
	\le \prob\Big\{s_{\min}(\widetilde{Y}) - \eta\sqrt{N'} - C\sqrt{N'\E\theta^{2}}\le \Big(1-\sqrt{\frac{n'}{N'}}-\gamma_{\mathsf{a}}\Big)\sqrt{N'} \Big\} + \exp(-c(M,\eta)\cdot N').
\end{multline}
By choosing sufficiently large $M$ and sufficiently small $\eta$, we need to bound the following probability:
\begin{equation*}
	\prob\Big\{s_{\min}(\widetilde{Y}) \le \E[\tilde{y}_{11}^{2}]^{1/2}\cdot\Big(1-\sqrt{\frac{n'}{N'}}-\frac{\gamma_{\mathsf{a}}}{2}\Big)\sqrt{N'} \Big\}.
\end{equation*}
Then, by Corollary V.2.1 in \cite{FS10} (that is applicable due to Assumption \ref{assump: 89}), we have for some constant $\mathfrak{c}>0$
\begin{equation}\label{eq: 1125}
	\prob\Big\{s_{\min}(\widetilde{Y}) \le \E[\tilde{y}_{11}^{2}]^{1/2}\cdot\Big(1-\sqrt{\frac{n'}{N'}}-\frac{\gamma_{\mathsf{a}}}{2}\Big)\sqrt{N'} \Big\} \\
	\lesssim \frac{1}{1-\sqrt{n'/N'}} \exp\Big(-\mathfrak{c}\Big(\frac{\gamma_{\mathsf{a}}}{2}(1-\sqrt{1/\mathsf{a}})\Big)^{3/2}n'\Big),
\end{equation}
where we further assume that $\gamma_{\mathsf{a}}/2<1-\sqrt{1/\mathsf{a}}$ without loss of generality.
Set $\eps\in(0,\eps_{\mathsf{a}})$.
Combining \eqref{eq: 1036}--\eqref{eq: 1125}, the desired result follows by choosing $\eps_{\mathsf{a}}$ sufficiently small.
Note that our choice of $\gamma_{\mathsf{a}}$ does not depend on $\eps$ and also $n'=n/2(1+o(1))$.
\end{proof}

\begin{proof}[Proof of Corollary \ref{cor: 188}]

Define the events $\mathcal{E}_{\textnormal{bad}}^{(k)}$ by setting
\[
\mathcal{E}_{\textnormal{bad}}^{(k)} = \{ \exists I \subset [n] : |I| = (1-\epsilon)n, \lVert \mathfrak{u}^{(k)}_{I} \rVert_2 < \delta_{\mathsf{a}} \}.
\]
Let $\mathcal{E}_{1}$ and $\mathcal{E}_{2}$ be as defined in the proof of Theorem \ref{thm: 133}. We define the event $\mathcal{E}_{3}^{(k)}$ through
\begin{equation*}
	\mathcal{E}_{3}^{(k)}=\{ N^{-1/2}s_{n-k}(X) < 1-\sqrt{1/\mathsf{a}} + t \},
\end{equation*}
where we choose $t > 0$ as in the proof of Theorem \ref{thm: 133}.
Similar to Proposition \ref{prop: 1029}, it follows that for any $K>0$, 
\begin{equation}\label{eq: 1536}
	\prob\{ N^{-1/2}s_{n-k}(X) \ge 1-\sqrt{1/\mathsf{a}} + t \} = O(N^{-K}).
\end{equation} 
Again, by Proposition \ref{prop: 709}, Proposition \ref{prop: 716}, and \eqref{eq: 1536}, for any constant $K > 0$, one finds that
\[
\prob(\mathcal{E}_{1}^{c} \cup \mathcal{E}_{2}^{c} \cup (\mathcal{E}_{3}^{(k)})^{c}) = O(N^{-K}).
\]
Then, following \eqref{eq: 1035} and \eqref{eq: 1039} similarly, we deduce that \eqref{eq: 1045} holds on the event 
\begin{align*}\{\lVert \mathfrak{u}^{(k)}_{I} \rVert_{2} < \delta_{\mathsf{a}}\}\cap\mathcal{E}_{2} \cap \mathcal{E}_{3}^{(k)}.
\end{align*} 
The remaining steps are identical.
\end{proof}

\section{Technical lemmas}\label{sec: technicality}

First we state Seginer's result about expected norms of random matrices, which is helpful when we
control an operator norm of a minor.

\begin{proposition}[Corollary 2.2 in \cite{Seginer00}]\label{prop: Seginer}
Let $Y=(y_{ij})$ be $m_{1}\times m_{2}$ random matrix such that $y_{ij}$s are i.i.d.~zero mean random variables.
Denote by $Y_{i\cdot}$ and $Y_{\cdot j}$ the $i$-th row and $j$-th column of $Y$ respectively.
Then, there exists a constant $C$ such that for any $m_{1},m_{2}\ge 1$ and any $q\le 2\log\max\{m_{1},m_{2}\}$, the following inequality holds:
\begin{equation*}
	\E\lVert Y\rVert^{q}
	\le C^{q}\bigg( \E\max_{1\le i\le m_{1}}\lVert Y_{i\cdot}\rVert^{q}_{2} + \E\max_{1\le j\le m_{2}}\lVert Y_{\cdot j}\rVert^{q}_{2} \bigg).
\end{equation*} 
\end{proposition}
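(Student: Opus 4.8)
The plan is to prove this by the moment (power‑trace) method, which is the natural route to an estimate of row/column‑norm type. First, since the $y_{ij}$ are i.i.d.\ with mean zero, a standard symmetrization inequality gives $\E\lVert Y\rVert^{q}\le 2^{q}\,\E\lVert \varepsilon\circ Y\rVert^{q}$ with $(\varepsilon_{ij})$ an independent Rademacher matrix; conditioning on the magnitudes $(|y_{ij}|)$ then reduces the problem to a matrix with independent entries of prescribed absolute value and i.i.d.\ Rademacher signs. We may assume $m_{1}\le m_{2}$ and, after rounding $q$ up, that $q=2p$ is an even integer, and we use $\lVert Y\rVert^{q}=\lambda_{\max}(YY^{\mathsf T})^{p}\le \mathrm{tr}\big((YY^{\mathsf T})^{p}\big)$ so that it suffices to bound $\E\,\mathrm{tr}\big((YY^{\mathsf T})^{p}\big)$.

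The heart of the argument is the combinatorial expansion
\[
\E\,\mathrm{tr}\big((YY^{\mathsf T})^{p}\big)=\sum \E\Big[\prod y_{\bullet\bullet}\Big],
\]
where the sum ranges over closed walks $i_{1}\to j_{1}\to i_{2}\to\cdots\to i_{p}\to j_{p}\to i_{1}$ of length $2p$ in the complete bipartite graph on $[m_{1}]\sqcup[m_{2}]$ and the product is over the edges traversed. After taking the expectation over the signs, only walks that traverse every edge an even number of times survive, and one organizes these by their (tree‑shaped) reduced graph. The two extreme families are the \emph{row stars}, in which all row‑indices coincide and which contribute $\sum_{i}\lVert Y_{i\cdot}\rVert_{2}^{2p}$, and the \emph{column stars}, contributing $\sum_{j}\lVert Y_{\cdot j}\rVert_{2}^{2p}$; these are the dominant terms. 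For a general walk whose reduced graph is a tree with $r$ row‑ and $s$ column‑vertices one has $r+s-1\le p$, and bounding each edge weight $|y_{e}|^{m_{e}}$ by $|y_{e}|^{2}\,(\max_{e'\sim e}|y_{e'}|)^{m_{e}-2}$ lets one reabsorb the branching walks into $C^{2p}\big(\sum_{i}\lVert Y_{i\cdot}\rVert_{2}^{2p}+\sum_{j}\lVert Y_{\cdot j}\rVert_{2}^{2p}\big)$, the number of walks of each shape contributing only a further $C^{2p}$. Finally one passes from $\sum_{i}\lVert Y_{i\cdot}\rVert_{2}^{2p}\le m_{1}\max_{i}\lVert Y_{i\cdot}\rVert_{2}^{2p}$ (and its column analogue) to the stated form by noting that $m_{1}\le m_{2}=\max\{m_{1},m_{2}\}\le e^{2p}$ whenever $q=2p\gtrsim\log\max\{m_{1},m_{2}\}$, so the spurious factor $m_{1}$ disappears into $C^{2p}$.

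The main obstacle is twofold. The combinatorial bookkeeping just sketched — classifying walks by reduced graphs, counting the walks of each shape, and verifying that the branching contributions are genuinely \emph{dominated} by the star contributions rather than merely comparable — is where all the real work sits; this is essentially Seginer's contribution, which is why the estimate is quoted rather than re‑derived here. Secondly, the plain moment method is only comfortable in the critical window $q\asymp\log\max\{m_{1},m_{2}\}$: extending the conclusion down to \emph{all} $q\le 2\log\max\{m_{1},m_{2}\}$ while keeping the $\E\max$ on the right‑hand side (rather than an $L^{2p}$ surrogate) cannot be done by naive Lyapunov interpolation when the entries are heavy‑tailed, and one should instead combine Seginer's direct (net/decomposition) argument for $\E\lVert Y\rVert$ with a truncation of the entries at the scale of a typical row/column $\ell^{2}$‑norm, bounding the truncated part by the moment method and the (few) large entries crudely against the row/column norms. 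For the applications of this proposition in the present paper one always takes $q$ of order $\log N$, so only the moment‑method window is actually needed.
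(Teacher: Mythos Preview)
The paper does not prove this proposition at all: it is stated in Section~5 as a technical lemma and attributed directly to Seginer (Corollary~2.2 in \cite{Seginer00}), with no argument given. So there is no ``paper's own proof'' to compare against; your sketch is effectively an outline of the original Seginer argument rather than an alternative to anything in this paper.

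As to the sketch itself, the broad strategy --- symmetrize, pass to $\mathrm{tr}((YY^{\mathsf T})^{p})$, expand as a sum over closed bipartite walks, kill odd-multiplicity edges by the sign average, and identify row/column stars as the dominant contribution --- is indeed the skeleton of Seginer's proof. You are also right that the genuine work is the combinatorial control of the non-star walks, and that this is exactly what Seginer does. One point worth flagging: your absorption step ``$m_{1}\le e^{2p}$ whenever $q=2p\gtrsim\log\max\{m_{1},m_{2}\}$'' uses a \emph{lower} bound on $q$, whereas the proposition only assumes the \emph{upper} bound $q\le 2\log\max\{m_{1},m_{2}\}$. You acknowledge this gap in your second obstacle paragraph, but the resolution you propose (truncation plus Seginer's $q=1$ net argument) is not actually how Seginer handles small $q$; he obtains the full range directly from the combinatorics, since the trace bound $\mathrm{tr}((YY^{\mathsf T})^{p})$ already carries the factor $m_{1}$ for free and the star contributions $\sum_{i}\lVert Y_{i\cdot}\rVert_{2}^{2p}$ are compared to $\E\max_{i}\lVert Y_{i\cdot}\rVert_{2}^{2p}$ only after a more careful accounting. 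For the present paper your closing remark is correct: the proposition is only ever invoked with $q\asymp\log N$, so the small-$q$ issue is moot for the applications here.
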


The second statement is due to Rogozin. Combining this with the small ball estimate of Rudelson and Vershynin \cite{RV15}, Tikhomirov derived Lemma \ref{lem: 1081} below. 

\begin{theorem}[Theorem 1 in \cite{Rogozin61}]\label{thm: 555}
Let $\{Z_{j}\}_{j=1}^{k}$ be independent random variables for some positive integer $k$.
Let
\begin{equation*}
	s_{1},s_{2},\cdots,s_{k}>0
\end{equation*}
be some constants.
There exists a universal constant $C_{\ref{thm: 555}}>0$ such that
\begin{equation*}
	\mathcal{Q}\bigg(\sum_{j=1}^{k}Z_{j},s\bigg)\le C_{\ref{thm: 555}} \cdot s \cdot \bigg(\sum_{j=1}^{k}\big(1-\mathcal{Q}(Z_{j},s_{j})\big)s_{j}^{2}\bigg)^{-1/2},
\end{equation*}
for every $s$ satisfying $s \ge s_{j}$ for all $j=1,2,\cdots,k$.
\end{theorem}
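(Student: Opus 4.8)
The final statement is the classical Rogozin (Kolmogorov--Rogozin--Esseen) concentration inequality, and I would prove it by the characteristic-function method. Write $S=\sum_{j=1}^{k}Z_{j}$ and $\phi_{Z_{j}}(t)=\E e^{itZ_{j}}$. The first step is the classical Esseen smoothing bound: there is a universal constant $C_{1}$ such that $\mathcal Q(S,s)\le C_{1}\,s\int_{-1/s}^{1/s}|\phi_{S}(t)|\,dt$, proved by testing $\prob\{|S-\nu|\le s\}$ against a nonnegative bump function (a Fej\'er-type kernel $w(x)\propto(\sin(x/2s)/(x/2s))^{2}$) whose Fourier transform is supported in $[-1/s,1/s]$ and which is bounded below on an interval of length comparable to $s$. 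Since the $Z_{j}$ are independent, $|\phi_{S}(t)|=\prod_{j}|\phi_{Z_{j}}(t)|$; introducing independent copies $Z_{j}'$ and the symmetrizations $\tilde Z_{j}=Z_{j}-Z_{j}'$ gives $|\phi_{Z_{j}}(t)|^{2}=\E\cos(t\tilde Z_{j})=1-g_{j}(t)$ with $g_{j}(t):=\E\bigl(1-\cos(t\tilde Z_{j})\bigr)\in[0,2]$, hence $|\phi_{S}(t)|\le\exp\bigl(-\tfrac12\sum_{j}g_{j}(t)\bigr)=e^{-G(t)/2}$ where $G:=\sum_{j}g_{j}$. So the task reduces to showing $\int_{-1/s}^{1/s}e^{-G(t)/2}\,dt\lesssim\sigma^{-1}$ with $\sigma^{2}:=\sum_{j}s_{j}^{2}\bigl(1-\mathcal Q(Z_{j},s_{j})\bigr)$; since $\int_{-1/s}^{1/s}e^{-G/2}\le 2/s$, the bound is automatic when $\sigma\le s$, so I may assume $\sigma>s$.

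The three estimates I would assemble are the following. (i) \emph{Symmetrization}: for any $y\in\R$, $\prob\{|Z_{j}-y|\le s_{j}\}\le\mathcal Q(Z_{j},s_{j})$, hence $\prob\{|\tilde Z_{j}|\le s_{j}\}\le\mathcal Q(Z_{j},s_{j})$ and $p_{j}:=\prob\{|\tilde Z_{j}|>s_{j}\}\ge 1-\mathcal Q(Z_{j},s_{j})$. (ii) \emph{An averaged lower bound for $G$}: by Fubini, $\int_{0}^{\lambda}g_{j}(t)\,dt=\lambda\,\E\bigl[1-\tfrac{\sin(\lambda\tilde Z_{j})}{\lambda\tilde Z_{j}}\bigr]\ge c\lambda\,\E\bigl[\min(\lambda^{2}\tilde Z_{j}^{2},1)\bigr]=c\lambda^{3}\,\E\bigl[\min(\tilde Z_{j}^{2},\lambda^{-2})\bigr]$, using the elementary inequality $1-\tfrac{\sin u}{u}\ge c\min(u^{2},1)$; for $\lambda\le 1/s_{j}$ this is $\ge c\lambda^{3}s_{j}^{2}p_{j}$ because on $\{|\tilde Z_{j}|>s_{j}\}$ the minimum equals $s_{j}^{2}$, and summing over $j$ gives $\int_{0}^{\lambda}G(t)\,dt\ge c\lambda^{3}\sigma^{2}$ for every $0<\lambda\le 1/s$ (here I use the hypothesis $s\ge s_{j}$ for all $j$). (iii) \emph{Integer quasi-monotonicity}: since $1-\cos(m\theta)=2\sin^{2}(m\theta/2)\le 2m^{2}\sin^{2}(\theta/2)=m^{2}(1-\cos\theta)$ for every integer $m\ge1$, we get $G(mt)\le m^{2}G(t)$, i.e.\ $G(t)/t^{2}$ is non-increasing along integer rescalings.

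Finally I would combine these. From (ii) at $\lambda=1/s$, $G$ cannot satisfy $G(t)<c\,t^{2}\sigma^{2}$ throughout $(0,1/s]$, so there is a scale $t^{*}\in(0,1/s]$ with $G(t^{*})\gtrsim t^{*2}\sigma^{2}$; pushing this down the dyadic chain $t^{*},t^{*}/2,t^{*}/4,\dots$ via (iii) together with $G(2t)\le 4G(t)$ forces $G(t)\gtrsim t^{2}\sigma^{2}$ on a subset exhausting $(0,t^{*}]$ up to constants, so $\int_{0}^{t^{*}}e^{-G(t)/2}\,dt\lesssim\int_{0}^{\infty}e^{-c't^{2}\sigma^{2}}\,dt\lesssim\sigma^{-1}$. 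On the remaining window $[t^{*},1/s]$ the function $G$ may come back near $0$, but $G(\tau)=0$ forces every $\tilde Z_{j}$ to be (essentially) supported on $\tfrac{2\pi}{\tau}\mathbb{Z}$, and then $G(\tau+u)=G(u)$, so near any such (near-)zero $G$ reproduces its behaviour near the origin and is again $\gtrsim u^{2}\sigma^{2}$ by (ii)--(iii); a dyadic decomposition of $[t^{*},1/s]$ exploiting this self-similarity and the resulting Gaussian decay near each (near-)zero keeps the tail contribution $\lesssim\sigma^{-1}$. Hence $\int_{-1/s}^{1/s}e^{-G/2}\,dt\lesssim\sigma^{-1}$, and Esseen's bound concludes the proof. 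I expect this tail window to be the real obstacle: because $G$ can genuinely vanish at isolated frequencies even when the $\tilde Z_{j}$ are well spread, there is \emph{no} pointwise bound $G(t)\gtrsim t^{2}\sigma^{2}$ on all of $[0,1/s]$, and making the estimate uniform requires carefully interleaving the averaged bound (ii), the integer-scale monotonicity (iii), and the self-similarity at zeros; it is usually cleanest to reduce first to symmetric (hence real-characteristic-function) variables, and if needed to suitably truncated ones, to keep the zero set of $G$ under control.
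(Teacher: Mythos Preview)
The paper does not prove this statement. Theorem~\ref{thm: 555} appears in Section~\ref{sec: technicality} (``Technical lemmas'') purely as a citation of Rogozin's classical 1961 result, and is invoked as a black box in the proof of Proposition~\ref{prop: 335}. There is therefore no proof in the paper to compare your proposal against.

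As for the proposal itself: the characteristic-function route you outline---Esseen smoothing, symmetrization to $\tilde Z_j$, the averaged lower bound $\int_0^\lambda G\ge c\lambda^3\sigma^2$, and the scale inequality $G(mt)\le m^2G(t)$---is indeed one of the standard modern approaches (due essentially to Esseen; Rogozin's original argument was combinatorial). Steps (i)--(iii) are correct as stated. The final paragraph, however, is not a proof. The dyadic descent from $t^*$ gives $G(t^*/2^k)\gtrsim(t^*/2^k)^2\sigma^2$ only at those discrete points, with no control on the intervals in between, so the leap to $\int_0^{t^*}e^{-G/2}\lesssim\sigma^{-1}$ is unjustified. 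Likewise, on $[t^*,1/s]$ you invoke the periodicity $G(\tau+u)=G(u)$, but this identity holds only at \emph{exact} zeros $G(\tau)=0$; for near-zeros there is no such self-similarity, and ``interleaving (ii), (iii), and self-similarity'' is a plan, not an argument. You are right that no pointwise bound $G(t)\gtrsim t^2\sigma^2$ is available, and that this is the crux; the textbook proofs (Esseen 1966, or Petrov's \emph{Sums of Independent Random Variables}, Ch.~III) close the gap by a different device rather than the route you sketch.
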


\begin{lemma}[Corollary 6 in \cite{Tikhomirov16}] \label{lem: 1081}
Let $\mathbf{x}=(\mathbf{x}_{1},\mathbf{x}_{2},\cdots,\mathbf{x}_{k})$ be a random vector in $\mathbb{R}^{k}$ with independent coordinates such that
\begin{equation*}
	\mathcal{Q}(\mathbf{x}_{i},t)\le 1-\tau,\quad i=1,2,\cdots,k
\end{equation*}
for some constants $t>0$ and $\tau\in(0,1)$. Then, for any $d\in\{1,2,\cdots,k\}$, positive integer $\ell$ and any $d$-dimensional non-random subspace $E\subset\mathbb{R}^{k}$,
\begin{equation*}
	\mathcal{Q}(\textnormal{Proj}_{E}\mathbf{x},t\sqrt{d}/\ell)\le (C_{\ref{lem: 1081}}/\sqrt{\ell\tau})^{d/\ell},
\end{equation*}
where $C_{\ref{lem: 1081}}>0$ is a (sufficiently large) universal constant that may depend on the universal constant $C_{\ref{thm: 555}}$ of Theorem \ref{thm: 555}.
\end{lemma}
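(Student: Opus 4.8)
\section*{Proof proposal for Lemma \ref{lem: 1081}}

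The plan is to follow \cite{Tikhomirov16} and obtain the bound by feeding Rogozin's inequality (Theorem \ref{thm: 555}) into the Rudelson--Vershynin small-ball estimate for linear images of vectors with independent coordinates \cite{RV15}. The first step is a block reduction. Fix an orthonormal basis $v^{(1)},\dots,v^{(d)}$ of $E$ (chosen, after a mild regularization, so that the directions used below have $\ell^{\infty}$-norm of the right order), partition $\{1,\dots,d\}$ into $r:=\lceil d/\ell\rceil$ blocks of size at most $\ell$, and select one index in each block; call the corresponding basis vectors $w^{(1)},\dots,w^{(r)}$. Since $\mathrm{Proj}_{E}\mathbf{x}$ has coordinates $\langle\mathbf{x},v^{(m)}\rangle$ in this basis, discarding the non-selected ones gives $\|\mathrm{Proj}_{E}\mathbf{x}-\nu\|_{2}^{2}\ge\sum_{s=1}^{r}(\langle\mathbf{x},w^{(s)}\rangle-\nu'_{s})^{2}$ for a suitable $\nu'\in\mathbb{R}^{r}$. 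Hence, writing $V$ for the $r\times k$ matrix whose rows are $w^{(1)},\dots,w^{(r)}$ (a partial isometry, as the $w^{(s)}$ are orthonormal),
\[
\mathcal{Q}(\mathrm{Proj}_{E}\mathbf{x},\,t\sqrt{d}/\ell)\ \le\ \mathcal{Q}(V\mathbf{x},\,t\sqrt{d}/\ell),
\]
so it suffices to control the small-ball probability of the linear image $V\mathbf{x}$, where $V$ has $r$ nonzero singular values, all equal to $1$, and $\|V\|_{\mathrm{HS}}^{2}=r$.

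Next I would invoke the small-ball estimate for linear images of \cite{RV15}, whose scalar input is precisely Rogozin's inequality: for a unit vector $w$, Theorem \ref{thm: 555} applied to the independent summands $\mathbf{x}_{j}w_{j}$ with $s_{j}=t|w_{j}|$, combined with $\mathcal{Q}(\mathbf{x}_{j},t)\le1-\tau$ and $\sum_{j}w_{j}^{2}=1$, yields $\mathcal{Q}(\langle\mathbf{x},w\rangle,s)\le C_{\ref{thm: 555}}\,s/(t\sqrt{\tau})$ for every $s\ge t\|w\|_{\infty}$. The aggregation of the $r$ (dependent) functionals $\langle\mathbf{x},w^{(s)}\rangle$ into an $r$-dimensional small-ball bound is the heart of \cite{RV15}; its mechanism is to write $\mathbf{1}\{\|V\mathbf{x}-\nu'\|_{2}\le\rho\}\le e\,e^{-\|V\mathbf{x}-\nu'\|_{2}^{2}/\rho^{2}}$, pass to characteristic functions via $e^{-a^{2}}=\mathbb{E}_{g}e^{\mathrm{i}\sqrt{2}ag}$ with $g\sim N(0,I_{r})$ so that $\mathbb{E}_{\mathbf{x}}e^{-\|V\mathbf{x}-\nu'\|_{2}^{2}/\rho^{2}}=\mathbb{E}_{g}\bigl[e^{-\mathrm{i}\sqrt{2}\langle\nu',g\rangle/\rho}\prod_{j}\varphi_{\mathbf{x}_{j}}\bigl(\sqrt{2}(V^{*}g)_{j}/\rho\bigr)\bigr]$ (with $\|V^{*}g\|_{2}=\|g\|_{2}$), bound each factor using $\mathcal{Q}(\mathbf{x}_{j},t)\le1-\tau$, and integrate the resulting $\chi^{2}_{r}$-type expression. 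This produces a bound of the shape $\mathcal{Q}(V\mathbf{x},\varepsilon t\sqrt{r})\le(C_{0}\varepsilon/\sqrt{\tau})^{r}$ in the relevant range of $\varepsilon$. Since the radius is $\rho=t\sqrt{d}/\ell$, i.e.\ $\varepsilon t\sqrt{r}=t\sqrt{d}/\ell$, one has $\varepsilon=\sqrt{d}/(\ell\sqrt{r})\asymp1/\sqrt{\ell}$ because $r=\lceil d/\ell\rceil$; substituting and using $r\ge d/\ell$ gives $\mathcal{Q}(\mathrm{Proj}_{E}\mathbf{x},t\sqrt{d}/\ell)\le(C_{0}/\sqrt{\ell\tau})^{d/\ell}$ after absorbing universal constants into $C_{\ref{lem: 1081}}$. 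For the remaining small values of $\ell$, namely $\ell<C_{\ref{lem: 1081}}^{2}/\tau$, the right-hand side already exceeds $1$ and the claim is vacuous; this range includes in particular the degenerate case $r=1$, in which the estimate is just Rogozin's inequality applied to a single direction.

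I expect the genuine work --- and the main obstacle --- to be the second step: importing the linear-image small-ball estimate of \cite{RV15} in exactly the form needed. This splits into (i) the regularization of the orthonormal basis of $E$ so that the selected directions $w^{(s)}$ have $\ell^{\infty}$-norm small enough to satisfy the hypothesis $s\ge t\|w^{(s)}\|_{\infty}$ of Rogozin's inequality at the relevant scale $s\asymp t/\sqrt{\ell}$ (this is where the projection being onto a subspace ``not too aligned'' with coordinate axes matters); (ii) the honest passage from the $r$ dependent linear functionals to a single $r$-dimensional anti-concentration bound, i.e.\ making the block-wise tensorization rigorous through the characteristic-function argument; and (iii) the constant bookkeeping, including verifying that the natural output $(C_{0}/\sqrt{\ell\tau})^{d/\ell}$ dominates the truth uniformly over all admissible integers $\ell$ (the cases $\ell\gtrsim\sqrt{d}$, where the rescaled radius leaves the ``sharp'' range of \cite{RV15}, being handled by monotonicity of $r\mapsto\mathcal{Q}(\,\cdot\,,r)$ together with the one-dimensional Rogozin bound, since there the target is close to the trivial bound $1$).
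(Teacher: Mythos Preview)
The paper does not prove this lemma; it is quoted as Corollary~6 of \cite{Tikhomirov16}, with only the one-line remark that Tikhomirov derives it by combining Rogozin's inequality (Theorem~\ref{thm: 555}) with the Rudelson--Vershynin small-ball estimate \cite{RV15}. So there is no in-paper argument to compare against beyond that description.

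Your sketch follows exactly that recipe, and the arithmetic of the block reduction checks out: passing from the full $d$-dimensional projection to a sub-projection of rank $r=\lceil d/\ell\rceil$ and then invoking the RV15-type bound at scale $\varepsilon t\sqrt{r}=t\sqrt{d}/\ell$, i.e.\ $\varepsilon\asymp 1/\sqrt{\ell}$, indeed yields $(C_0/\sqrt{\ell\tau})^{d/\ell}$, and the regime $\ell<C_{\ref{lem: 1081}}^{2}/\tau$ is vacuous as you note. One comment on the internal logic: you present Rogozin as the ``scalar input'' fed into the RV15 machinery, and accordingly flag the $\ell^{\infty}$-regularization of the basis vectors $w^{(s)}$ (so that the Rogozin hypothesis $s\ge t\|w^{(s)}\|_{\infty}$ holds at scale $s\asymp t/\sqrt{\ell}$) as the chief obstacle. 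But in the characteristic-function route you then describe --- bounding $\mathbf{1}\{\|V\mathbf{x}-\nu'\|_{2}\le\rho\}$ by a Gaussian integral and using the coordinate hypothesis $\mathcal{Q}(\mathbf{x}_{j},t)\le 1-\tau$ on each factor $\varphi_{\mathbf{x}_{j}}$ --- the anti-concentration acts coordinate-wise on the $\mathbf{x}_{j}$, not direction-wise on $\langle\mathbf{x},w^{(s)}\rangle$, so no $\|w^{(s)}\|_{\infty}$ condition ever enters and no basis regularization is needed. In other words, item~(i) in your list of obstacles is not actually an obstacle for the argument you go on to sketch; apart from that conflation the plan is sound.
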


Lastly, let us record a technical result which was originally introduced in \cite{Tikhomirov15} to prove the Bai-Yin law for the smallest singular value under an optimal condition.

\begin{theorem}[Theorem 15 in \cite{Tikhomirov15}]\label{thm: 1301}
Let $\xi$ be a random variable with zero mean and unit variance. For any $M>0$ and $\eta>0$, there exist constants $C_{\ref{thm: 1301}}>0$ and $c_{\ref{thm: 1301}}>0$ such that the following holds.
Let $Y=(y_{ij})$ be an $N\times n$ random matrix with i.i.d.~entries distributed as $\xi$ and $n\le N$.
Further, let $\widetilde{Y}=(\tilde{y}_{ij})$ be an $N\times n$ matrix with the entries $\tilde{y}_{ij}=y_{ij}\indic\{|y_{ij}|\le M\}-\E[y_{ij}\indic\{|y_{ij}|\le M\}]$ and denote $\theta=\xi\indic\{|\xi|> M\}+\E[\xi\indic\{|\xi|\le M\}]$. Then, for large $N$, we have 
\begin{equation*}
	\prob\{ s_{\min}(Y) \ge s_{\min}(\widetilde{Y}) - \eta\sqrt{N} - C_{\ref{thm: 1301}}\sqrt{N\E[\theta^{2}]} \} \ge 1 - \exp(-c_{\ref{thm: 1301}}N).
\end{equation*}
\end{theorem}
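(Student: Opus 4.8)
The plan is to follow Tikhomirov's truncation argument. Write $Y=\widetilde Y+R$ with $R:=Y-\widetilde Y$; unwinding the definition of $\widetilde Y$, one has $R_{ij}=y_{ij}\indic\{|y_{ij}|>M\}+\E[y_{11}\indic\{|y_{11}|\le M\}]$, so $R$ has i.i.d.\ entries with $R_{ij}\overset{d}{=}\theta$, $\E R_{ij}=\E\xi=0$, and $\E R_{ij}^2=\E\theta^2$. Since $\|Yu\|_2\ge\|\widetilde Yu\|_2-\|Ru\|_2\ge s_{\min}(\widetilde Y)-\|Ru\|_2$ for every $u\in\mathbb S^{n-1}$, the claim would follow at once from the bound $\|R\|\le\eta\sqrt N+C\sqrt{N\E\theta^2}$ holding with probability $1-e^{-cN}$. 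The genuine difficulty, and the main obstacle, is that $\theta$ is only square-integrable, so $\|R\|$ need not be of order $\sqrt{N\E\theta^2}$: a single atypically large entry already forces $\|R\|\gg\sqrt N$ with probability that is merely $o(1)$, not exponentially small, so a naive triangle-inequality/net argument is hopeless. One therefore has to split the noise and treat its ``bulk'' and its rare large entries by different mechanisms.

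Fix a threshold $K=K(\eta)$ (a fixed large constant, or a slowly growing function of $N$) and write $R=R_1+R_2$, where $R_1$ has the centered entries $\theta_{ij}\indic\{|\theta_{ij}|\le K\}-\E[\theta\indic\{|\theta|\le K\}]$ and $R_2=R-R_1$. Then $R_1$ has mean-zero entries bounded by $2K$ with variance $\le\E\theta^2$, while $R_2$ is sparse (each entry is nonzero with probability $\prob\{|\theta|>K\}\le\E\theta^2/K^2$) but its nonzero entries are still heavy-tailed. For $R_1$: Seginer's inequality (Proposition~\ref{prop: Seginer}) together with Rosenthal/Bernstein bounds on the row- and column-$\ell^2$-norms, exactly as in the proof of Lemma~\ref{lem: 212}, gives $\E\|R_1\|^q\lesssim N\,(C\sqrt{N\E\theta^2})^q$ for $q\asymp\log N$, so the median of $\|R_1\|$ is $\lesssim\sqrt{N\E\theta^2}$; and since $\|R_1\|$ is a $1$-Lipschitz convex function of the bounded independent entries, Talagrand's concentration inequality upgrades this to $\prob\{\|R_1\|>C\sqrt{N\E\theta^2}+\tfrac{\eta}{3}\sqrt N\}\le e^{-cN}$. (The small rank-one shift $\E[\theta\indic\{|\theta|\le K\}]\,\mathbf 1_{N,n}$ produced inside $R_2$ by this centering is harmless: any $u$ with a non-negligible component along $\mathbf 1_n$ already makes $\|Yu\|_2$ of order $N\gg\sqrt N$, while for $u$ nearly orthogonal to $\mathbf 1_n$ the shift perturbs $\|Yu\|_2$ by only $O(1)$.)

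It remains to absorb the sparse heavy matrix $R_2$ into the error, uniformly over $\mathbb S^{n-1}$, with no control on $\|R_2\|$. First one isolates the $O(1)$ columns (and rows) of $R$ carrying its very largest entries: such a set is so sparse that it does not affect $s_{\min}(Y)=\min_i\operatorname{dist}(Y_{\cdot i},\operatorname{span}\{Y_{\cdot j}:j\ne i\})$, since for such a column $Y_{\cdot i}$ the corresponding distance is already far larger than $\sqrt N$. On the complementary submatrix one runs the compressible/incompressible dichotomy (in the sense of \cite{RV08, Tikhomirov16}, as used in Definition~\ref{def: 174}). For incompressible $u$ one has $\|u\|_\infty\lesssim1/\sqrt n$, so each coordinate $(R_2u)_i=\sum_j(R_2)_{ij}u_j$ is a weighted sum of the $\lesssim\prob\{|\theta|>K\}\,n$ nonzero entries in row $i$, all downweighted by $1/\sqrt n$; conditioning on the sparse pattern of the surviving large entries and applying Chebyshev yields $\|R_2u\|_2\lesssim\sqrt{N\E\theta^2}+\tfrac{\eta}{3}\sqrt N$ on an event of probability $1-e^{-cN}$, which beats the cardinality of a net of incompressible directions. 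For compressible $u$ — essentially supported on some $S$ with $|S|\le\delta n$ — one argues that the tall, thin submatrix $Y_{\cdot S}$ is robustly well conditioned: for fixed $(S,u)$, $\|Y_{\cdot S}u\|_2^2=\sum_{i=1}^N Z_i^2$ with $Z_i$ i.i.d.\ mean zero and $\E Z_i^2\ge1-\delta^2$, and anti-concentration of $Z_i$ together with a Chernoff bound on $\#\{i:|Z_i|\ge c_1\}$ gives $\|Y_{\cdot S}u\|_2\ge(1-C\delta)\sqrt N$ with probability $1-e^{-cN}$; a union bound over the $\binom{n}{\delta n}$ choices of $S$ and a net on $\mathbb S^{|S|-1}$ closes this case once $\delta\le\eta/(2C)$, using only that the entries of $Y$ have unit variance.

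Putting the pieces together on the intersection of these events: for incompressible $u$, $\|Yu\|_2\ge\|\widetilde Yu\|_2-\|R_1\|-\|R_2u\|_2\ge s_{\min}(\widetilde Y)-C\sqrt{N\E\theta^2}-\tfrac{2\eta}{3}\sqrt N$, while for compressible $u$, $\|Yu\|_2\ge(1-\tfrac{\eta}{2})\sqrt N\ge s_{\min}(\widetilde Y)-\eta\sqrt N$, using the trivial bound $s_{\min}(\widetilde Y)\le(1+\tfrac{\eta}{2})\sqrt N$ valid for large $N$ (from $s_{\min}(\widetilde Y)^2\le n^{-1}\|\widetilde Y\|_F^2\le N\E\widetilde Y_{11}^2(1+o(1))$). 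Taking the infimum over $u\in\mathbb S^{n-1}$ gives $s_{\min}(Y)\ge s_{\min}(\widetilde Y)-\eta\sqrt N-C\sqrt{N\E\theta^2}$ with probability $1-e^{-cN}$, which after renaming constants is the assertion. I expect the treatment of $R_2$ in the previous paragraph to be the real obstacle relative to the light-tailed theory: one must control the effect of the rare, very large entries uniformly over the sphere with no bound whatsoever on $\|R_2\|$, which is precisely why the isolation-of-extreme-columns step and the compressible/incompressible splitting (rather than a global operator-norm estimate) are unavoidable.
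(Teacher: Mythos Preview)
The paper does not prove Theorem~\ref{thm: 1301}: it is simply recorded in Section~\ref{sec: technicality} as Theorem~15 of \cite{Tikhomirov15} and used as a black box in the proof of Theorem~\ref{thm: 133}. There is therefore no ``paper's own proof'' to compare your proposal against; you have written a proof sketch for a result the authors deliberately chose to cite rather than reprove.

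That said, your sketch has two genuine gaps you should be aware of before claiming it as a self-contained argument. First, in the incompressible case you assert $\lVert u\rVert_\infty\lesssim 1/\sqrt{n}$, but incompressibility in the Rudelson--Vershynin sense only guarantees that a \emph{fraction} of the coordinates are of size $\asymp 1/\sqrt{n}$; it does not bound the sup-norm, so a few coordinates of $u$ may still be of order $1$ and pick up the large entries of $R_2$. You would need a further ``spread'' decomposition or a different mechanism (e.g.\ arguing via distances to hyperplanes as in \cite{Tikhomirov15}) to handle this. Second, in the compressible case you invoke ``anti-concentration of $Z_i$'' to get $\lVert Y_{\cdot S}u\rVert_2\ge (1-C\delta)\sqrt{N}$, but the only hypothesis on $\xi$ is mean zero and unit variance---no anti-concentration is assumed---so a bare Chernoff argument on $\#\{i:|Z_i|\ge c_1\}$ is not available without further work (one typically extracts anti-concentration from the second-moment condition via a Paley--Zygmund-type argument, but that requires a fourth-moment bound you do not have). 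Tikhomirov's actual proof in \cite{Tikhomirov15} circumvents both issues with a more delicate structure; if you want to reconstruct it, that paper is the place to look.
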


\end{document}